 \newcommand{\ba}{\begin{align*}}
 \newcommand{\ea}{\end{align*}}
 \newcommand{\na}{\nabla}
\newcommand{\la}{\langle}
\newcommand{\ra}{\rangle}
\newcommand{\lc}{\left(}
\newcommand{\rc}{\right)}
\newcommand{\pt}{\partial_t}
\newcommand{\df}{\Delta_f}
\newcommand{\ep}{\epsilon}
 \newcommand{\dvol}{\text{d}V}
\newcommand{\dmu}{\text{d}\mathbf{\mu}}
 \newcommand{\area}{\mathcal{A}}
 \newcommand{\diam}{\text{diam}}
 \def\ExtendSymbol#1#2#3#4#5{\ext@arrow 0099{\arrowfill@#1#2#3}{#4}{#5}}
 \def\ExtendSymbol#1#2#3#4#5{\ext@arrow 0099{\arrowfill@#1#2#3}{#4}{#5}}
 \newcommand\longright[2][]{\ExtendSymbol{-}{-}{\rightarrow}{#1}{#2}}
\def\Xint#1{\mathchoice
{\XXint\displaystyle\textstyle{#1}}%
{\XXint\textstyle\scriptstyle{#1}}%
{\XXint\scriptstyle\scriptscriptstyle{#1}}%
{\XXint\scriptscriptstyle\scriptscriptstyle{#1}}%
\!\int}
\def\XXint#1#2#3{{\setbox0=\hbox{$#1{#2#3}{\int}$ }
\vcenter{\hbox{$#2#3$ }}\kern-.55\wd0}}
\def\aint{\Xint-}
\numberwithin{equation}{section}
\newtheorem{thm}{Theorem}[section]
\newtheorem{cor}[thm]{Corollary}
\newtheorem{prop}[thm]{Proposition}
\newtheorem{lem}[thm]{Lemma}
\newtheorem{defn}[thm]{Definition}
\theoremstyle{remark}
\newtheorem{remark}[thm]{\textbf{Remark}}
\title{On the regular-convexity of Ricci shrinker limit spaces}
\author{Shaosai Huang,\ Yu Li\ and\ Bing Wang}
\date{\today}
\begin{document}
\maketitle

\begin{abstract}
In this paper we study the structure of the pointed-Gromov-Hausdorff limits of
sequences of Ricci shrinkers. We define a regular-singular decomposition
following the work of Cheeger-Colding for manifolds with a uniform Ricci
curvature lower bound, and prove that the regular part of any non-collapsing
Ricci shrinker limit space is strongly convex, inspired by Colding-Naber's
original idea of parabolic smoothing of the distance functions.
\end{abstract}

\tableofcontents

\section{Introduction}
A \emph{Ricci shrinker} is a triple $(M,g,f)$ where $(M,g)$ is a complete
Riemannian manifold, and $f$ is a $C^2$ potential function on $M$ such that its
Ricci curvature $Rc$ satisfies
\begin{align} 
Rc+Hess_f\ =\ \frac{1}{2}g,
\label{E100}
\end{align}
where $f$ is normalized by adding a constant, if necessary, so that the scalar
curvature $R$ satisfies
\begin{align} 
R+|\nabla f|^2\ =\ f.
\label{E101}
\end{align}
We will always fix some minimal point $p\in M$ (whose existence guaranteed
by Lemma \ref{L100}) as a base point, making a pointed Ricci shrinker
$(M,p,g,f)$. We also recall the following fundamental fact (due to Binglong
Chen~\cite{BLC}) for the scalar curvature on a Ricci shrinker:
\begin{align}\label{eqn: positive_scalar}
R\ \ge\ 0.
\end{align} 

Ricci shrinkers, usually regarded as generalizations of positive Einstein manifolds,
form an important collection of objects for our understanding of the
singularities of Ricci flows. Indeed, Ricci shrinkers are critical points of
Perelman's $\boldsymbol{\mu}$-functional, see \cite{Pe1}. Up to dimension three, all Ricci
shrinkers are classified up to isometry, see \cite{Ham95}, \cite{Pe1},
\cite{Pe2}, \cite{Naber10}, \cite{NW08}, and \cite{CCZ08}. However, the higher,
even four, dimensional cases are much more complicated and a usual approach is
to consider the whole collection of a given dimension as a moduli space.
Important questions immediately arise: Is this moduli space compact with
respect to some reasonable topology? If not, is there a standard model for the
added points in the moduli space as a result of compactification? Systematic
studies of the moduli space of complete Ricci shrinkers are initiated
in~\cite{LLW17}, where the above questions are partially answered: it is shown
that a sequence of non-collapsed smooth Ricci shrinkers is expected to
subconverge, in the pointed-$\hat{C}^{\infty}$-Cheeger-Gromov topology, to a
metric space called a \emph{conifold Ricci shrinker}, see \cite[Theorem
8.6]{LLW17}.

There is yet another nice property that a conifold Ricci shrinker is defined to
satisfy: The regular part  $\mathcal{R}$ of the limit space $X$, which is an
open manifold, should be strongly convex relative to $X$. Here we say that
$\mathcal{R}$ is \emph{strongly convex} if any limit minimal  geodesic
intersecting $\mathcal{R}$ non-trivially has its entire interior contained
in $\mathcal{R}$. Note that this is a slightly stronger concept compared to the
usual geodesic convexity.

The main purpose of the current paper is then to prove the desired strong
convexity of the regular part, therefore justifying the limit
space to be indeed a conifold Ricci shrinker.
Let us denote by $\mathcal{M}_m(A)$ the moduli space of $m$-dimensional Ricci
shrinkers with a uniform $\boldsymbol{\mu}$-entropy lower bound by $-A$ ($A>0$
fixed), and our first result is the following regular-convexity theorem:
\begin{thm}[Regular-convexity of Ricci shrinker limits]
Let $\{(M_i,p_i,g_i,f_i)\}\subset \mathcal{M}_m(A)$ be a sequence of Ricci
shrinkers that converges, in the pointed-$\hat{C}^{\infty}$-Cheeger-Gromov
topology, to a metric space with potential $(X,p,d,f)$, then $X$ has a
regular-singular decomposition such that the regular part is strongly convex.
Therefore, $(X,p,d,f)$ is a conifold Ricci shrinker.
\label{thm: main2}
\end{thm}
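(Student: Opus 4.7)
The plan is to adapt the Cheeger-Colding-Naber framework to the weighted setting of Ricci shrinkers. First I would set up the regular-singular decomposition: declare $x\in\mathcal R$ if some neighborhood of $x$ in $X$ is realized as a smooth Cheeger-Gromov limit of open subsets of the $M_i$, and set $\mathcal S:=X\setminus\mathcal R$. The uniform $\boldsymbol{\mu}$-entropy lower bound provides Perelman non-collapsing for each $M_i$, so combined with the $\hat C^\infty$ convergence and the smooth compactness already developed in \cite{LLW17}, the set $\mathcal R$ is open and dense in $X$, carries a natural smooth Riemannian structure, and the potential $f$ descends smoothly to $\mathcal R$.

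For the strong convexity, by a standard openness/closedness argument in the parameter, it suffices to prove the following local statement: if $\gamma:[0,L]\to X$ is a limit minimal geodesic with both endpoints $\gamma(0),\gamma(L)\in\mathcal R$, then every interior point $\gamma(s_0)$ also lies in $\mathcal R$. Approximate $\gamma$ by minimal geodesics $\gamma_i\subset M_i$ converging to it in the Cheeger-Gromov sense. The core tool is Colding-Naber's parabolic smoothing of the distance functions, adapted here by replacing the usual heat equation with the \emph{drift heat equation} $\pt u=\df u$. This is the correct operator in the shrinker setting, because the Bakry-\'Emery Ricci tensor $Rc+\he f=\tfrac12 g$ is bounded below with a sharp, \emph{uniform} constant, so the weighted Bochner identity produces clean lower bounds for $\df\snorm{\na u}{}^2$. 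Let $h^{\pm}_{i,t}$ denote the $\df$-heat flow starting from the distances $b^{\pm}_i(x):=d(x,\gamma_i(0))$ and $d(x,\gamma_i(L))$, respectively.

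The main analytic estimate to establish is a Colding-Naber-type integral Hessian bound: along $\gamma_i$, the quantity $\int_0^L\snorm{\he h^{\pm}_{i,t}}{}^2(\gamma_i(s))\,ds$ is controlled by an error function $\Psi(t,i^{-1})$ that vanishes as $t\to 0^{+}$ and $i\to\infty$, uniformly on a tubular neighborhood of $\gamma$. From this and the associated $L^2$-smallness of $\snorm{\na h^{\pm}_{i,t}\mp\gamma_i'}{}$ along the geodesic, standard Cheeger-Colding machinery produces, at any interior parameter $s_0$, sequences of almost-splitting harmonic-type functions on small balls around $\gamma_i(s_0)$; combined with the volume non-collapsing, the weighted almost-splitting theorem forces every tangent cone at $\gamma(s_0)$ to split off an $\mathbb{R}$-factor and be metric-cone-like, and cone-splitting bootstrapping using the regularity of the endpoints then forces the cone to be Euclidean. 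Hence $\gamma(s_0)\in\mathcal R$, strong convexity follows, and $(X,p,d,f)$ meets the definition of a conifold Ricci shrinker from \cite{LLW17}.

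The principal obstacle is the $f$-dependent error terms pervading the Colding-Naber analysis: because the Ricci tensor is not globally bounded below but only satisfies $Rc\ge\tfrac12 g-\he f$, each Bochner identity, commutator, and segment inequality acquires correction terms involving $\na f$ and $\he f$. Uniform control of these on tubular neighborhoods of $\gamma_i$ is required for the error bound $\Psi(t,i^{-1})\to 0$ to be genuinely uniform, and must be extracted from the normalization $R+\snorm{\na f}{}^2=f$ together with the shrinker derivative estimates of \cite{LLW17}. Reconciling this with the non-compactness of the $M_i$ and the possible growth of $f$ at infinity, while still producing an effective parabolic smoothing in a region containing the entire geodesic, is where I expect the principal technical difficulty to lie.
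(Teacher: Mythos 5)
Your overall framework is the right one (and essentially the paper's): smooth the distance functions by the drift heat equation $\partial_t u=\Delta_f u$ and run Colding--Naber. But the endgame has a genuine gap. Strong convexity concerns \emph{any} limit minimal geodesic that merely meets $\mathcal{R}$ --- typically only at interior points, with both endpoints allowed to be singular --- so your reduction to the local statement ``both endpoints in $\mathcal{R}$ implies interior in $\mathcal{R}$'' does not cover the theorem, and the open/closed argument cannot even be run from it: if the known regular parameters accumulate at $t_\ast$ from one side only, you have no regular point on the other side of $t_\ast$ to serve as the second endpoint. More seriously, the step ``almost-splitting at $\gamma(s_0)$ plus cone-splitting bootstrapping using the regularity of the endpoints forces the cone to be Euclidean'' is not a valid mechanism: splitting a single $\mathbb{R}$-factor off tangent cones at interior points is classical, but regularity of the two endpoints supplies no extra cone points or symmetries \emph{at} $\gamma(s_0)$ for cone-splitting to exploit, and the impossibility of propagating regularity along a geodesic by such soft arguments is precisely why Colding--Naber needed a new idea. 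The missing ingredient is the uniform-in-scale comparison $d_{GH}\bigl(B(\gamma(s),r),B(\gamma(t),r)\bigr)\le C\bigl(|s-t|/l\bigr)^{\frac{1}{4m+2}}r$ for nearby interior parameters (Theorem \ref{thm: Holder_continuity}), proved by controlling the distance distortion under the gradient flow of $-\nabla d_q$ via the parabolic approximations, the segment inequality and the integral Hessian bounds; this yields H\"older continuity of tangent cones along the interior (Proposition \ref{prop: tangent_cone_continuity}), hence closedness of $\{t:\gamma(t)\in\mathcal{R}_k\}$ in $(0,l)$, and openness of $\mathcal{R}$ from the non-collapsed structure theory of \cite{LLW17} (Theorem \ref{thmin:a}) finishes by connectedness --- no endpoint regularity is used or needed.

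A second, related point: you correctly flag the $f$-dependent error terms as the principal difficulty, but the cure you suggest (the uniform Bakry-\'Emery constant $\tfrac12$ plus ``shrinker derivative estimates of \cite{LLW17}'') is not where the work lies. The Bochner term is indeed uniform, but every comparison-geometric input --- the Laplacian comparison for $d_q$, volume doubling, the segment inequality, Poincar\'e/Sobolev, the $f$-heat kernel bounds, cutoff functions, the mean value inequality --- depends on a gradient bound for $f$, which on a shrinker grows linearly in the distance to the base point. The paper's resolution is structural: it introduces the class $\mathcal{N}_m(F,K)$ with the distance-dependent bound $\max\{|f|,|\nabla f|^2\}\le F^2(d(p,\cdot))$, rebuilds the entire estimate package on balls $B(p,D)$ with constants depending on $D$, and applies it to a fixed limit geodesic, which sits inside some fixed $B(p,D)$. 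The higher-order estimates of \cite{LLW17} enter only afterwards, to upgrade $\mathcal{R}$ and the convergence to the smooth (conifold) level; they do not by themselves supply the uniform parabolic smoothing that the H\"older continuity theorem requires.
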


\begin{remark}
The concept of conifold Ricci shrinkers has its origin in the K\"ahler-Ricci
flat setting~\cite[Definition 1.2]{CW17A}, where the collection of certain
Calabi-Yau conifolds was shown to be compact  in the
pointed-$\hat{C}^{\infty}$-Cheeger-Gromov topology, and such compactness played
a fundamental role in the resolution of the Hamilton-Tian conjecture for
K\"ahler-Ricci flows in \cite{CW17B}.
\end{remark}

We refer the readers to \cite{LLW17} and Section 2.2 for detailed discussions of
the concepts involved. This theorem generalizes Colding-Naber's fundamental
regular-convexity theorem in~\cite{CN12} when the Ricci curvature is uniformly
bounded from below. In view of the close relationship between the geometry of
Ricci shrinkers and of manifolds with a uniform Bakry-\'Emery Ricci curvature
lower bound, we pursue a similar path that leads to Colding-Naber's theorem.

An alternative approach to Theorem~\ref{thm: main2}, as one may suggest,
would be applying a suitable conformal transformation, so that the resulted
manifold will locally acquire Ricci curvature bounds (see \cite{ZLZ10},
\cite{HM14} and \cite[Lemma 3.7]{LLW17}), and Colding-Naber's original H\"older
continuity theorem could be directly applied on an increasing sequence of
exhausting domains to prove the global convexity result. The technique of taking
suitable conformal transformations has actually be utilized in \cite{LLW17} to
improve the regularity of convergence. But it fails in the current context, due
to the simple fact that the conformal transformations involved do \emph{not}
preserve the minimal geodesics.

We need to point out, however, that in applying Colding-Naber's arugment to
prove Theorem~\ref{thm: main2}, the following difficulty has to be
overcomed: It is well-known that the comparison geometry of the Bakry-\'Emery
Ricci curvature depends, not only on the tensor lower bound, but also on the
gradient bound of the potential function. Yet basic properties of complete
Ricci shrinkers tell that the potential function has its gradient controlled,
in magnitude, by a linear function of the distance to the base point --- from
(\ref{E101}), (\ref{eqn: positive_scalar}) and Lemma \ref{L100}, we have 
\begin{align}\label{eqn: gradient_f}
\forall x\in M,\quad 2|\nabla f|(x)\ \le\ d(p,x)+\sqrt{2m}.
\end{align}
This growing gradient bound implies that the estimates one could obtain from the
Bakry-\'Emery Ricci curvature lower bound become worse as one moves further and
further away from the base point. 

While there have been extensive studies in the literature (see, for instance,
\cite{WW09}, \cite{WZ17} and \cite{ZZ17}) for manifolds with a uniform
Bakry-\'Emery Ricci curvature lower bound and a uniform bound on the gradient
of the potential function, they are not directly applicable to the case we are
handling: The assumption of a uniform bound on the gradient of the potential
functions is valid when studying the metric tangent cone of a fixed point in a
Ricci shrinker limit space (see \cite{WZ17} and Section 2.3), yet from
(\ref{eqn: gradient_f}) we see that in order to study the global properties of
the regular part in the limit space, such as the strong convexity we just
mentioned, it is necessary to develop estimates adapted to \emph{changing}
gradient control on the potential functions.

Therefore, we follow Colding-Naber's original idea, but we also have to start
from rebuilding the most basic estimates --- the situation is similar to the
recent joint work \cite{HW20a} by the first- and the third-named authors, where
Colding-Naber's estimates are generalized in another direction to control
the spreading of minimal geodesics emanating from a submanifold. 

Now let us recall that the major novelty in Colding-Naber's proof is a
parabolic smoothing of the distance function. This technique relies on a
uniform Ricci curvature lower bound and its resulting Li-Yau heat kernel
estimates~\cite{LY86}. On a Ricci shrinker, since the Ricci curvature lower
bound is replaced by a Bakry-\'Emery Ricci curvature bound, it is more natural
to consider, instead, the $f$-heat kernel, which is the heat kernel with
respect to the weighted measure $\mu_f$ whose density is defined as
$\dmu_f:=e^{-f}\dvol_g$. Roughly speaking, the corresponding $f$-heat kernel
bounds follow from a local volume doubling property and a local
$L^2$-Poincar\'e inequality of $\mu_f$.

In fact, we will give a new $f$-heat kernel estimate (Theorem \ref{T201}), and
its applications to the smoothing of distance functions, on more general
classes of manifolds, defined by only extracting the necessary analytic
properties:
\begin{defn}\label{defn:moduli}
Given a positive smooth nondecreasing function $F(r)$ on $[0,\infty)$, and a
number $K\ge 0$, $\mathcal N_m(F,K)$ is defined to be the class of pointed
smooth metric measure space $(M^m,p,g,\mu_f)$, where $\forall U\subset
M$ open, $\mu_f(U):=\int_Ue^{-f} \dvol_g$, such that
 \begin{itemize}
 \item[(a).] $(M^m,g)$ is a complete $m$-dimensional smooth Riemannian manifold.
 \item[(b).] There exists a $C^2$ function $f$ on $M$ such that
$f$ achieves a global minimum at some $p\in M$,
and $\forall x\in M$, $\max\{|f|(x),|\na f|^2(x)\} \le F^2(d_g(p,x))$.
 \item[(c).] The Bakry-\'Emery Ricci curvature satisfies
$Rc_f \coloneqq Rc+Hess_f\ \ge\ -Kg.$
\end{itemize}

In addition, the subclass $\mathcal N_m(F,K;V_0) \subset \mathcal
N_m(F,K)$ consists of all manifolds satisfying the
\begin{itemize}
 \item[(d).] Non-collapsing condition: 
$\mu_f(B(p,1))\ \ge\ V_0.$
\end{itemize}

We also define a subclass  $\mathcal{M}_m(F,K;V_0)\subset N_m(F,K;V_0)$ (see
\cite[Definition 10.1]{LLW17}) as all manifolds further satisfying the
following:
\begin{itemize}
 \item[(e).] $\forall x\in M$, $Rc_f(x)\le Kg(x)$ and $|R|(x)+|\nabla f|^2(x)
 \le F^2(d_g(p,x))$.
\end{itemize}
\end{defn}

Obviously, if we consider the above mentioned weighted measure, then by
(\ref{eqn: gradient_f}) any Ricci shrinker $(M^m,p,g,f)$ belongs to
$\mathcal N_m(F_{RS},1/2)$ for the linear function
$F_{RS}(t):=\frac{1}{2}(t+\sqrt{2m})$ (for any $t>0$). By \cite[Lemma
2.5]{LLW17}, we have for $c_m:=(4\pi)^{\frac{m}{2}}e^{-2^{4m+7}}$,
\begin{align*}
\mathcal{M}_m(A)\ \subset\
\mathcal{M}_m(F_{RS}, \frac{1}{2};c_me^{-A}).
\end{align*}
 For the sake of simplicity, we
will also denote any manifold $(M,p,g,\mu_f)\in \mathcal{N}_m(F,K)$ by
$(M,p,g,f)$.

\begin{remark}\label{rmk: RCD}
We point out any $(M^m,p,g,f) \in \mathcal N_m(F,K)$ is a smooth
$RCD(K,\infty)$ space, whose definition can be found in \cite{AGS14}, see also
\cite{LV09}, \cite{KT06a} and \cite{KT06b}. Theorem~\ref{thm: main2} shows that
the boundary of the moduli space $\mathcal{M}_m(A)$, in the
pointed-$\hat{C}^{\infty}$-Cheeger-Gromov topology, consists of conifold Ricci
shrinkers. By~\cite{GMS}, we see that these boundary points of
$\mathcal{M}_m(A)$ provide natural and novel examples of non-trivial
$RCD(\frac{1}{2},\infty)$ spaces.
\end{remark}

In fact, on a pointed-Gromov-Hausdorff limit of a sequence of manifolds in
$\mathcal{N}_m(F,K)$, one could define the regular part purely in terms of
metric tangent cones: By blowing-up the metric, the effect of the potential
function will be neglected, and the metric tangent cone could be defined by
Gromov's compactness theorem, see Section 2.3. Following a similar manner as
\cite{ChCo97}, the concept of regular-singular decomposition in
Theorem~\ref{thm: main2} could be generalized to the pointed-Gromov-Hausdorff
limits of sequences of manifolds in $\mathcal{N}_m(F,K)$. There is also, as
discussed in Section 2.2, a natural limit measure on a pointed-Gromov-Hausdorff
limit, making the convergence a pointed-measured-Gromov-Hausdorff convergence,
see Definition~\ref{def: pmGH}. In the last section, we will follow
Colding-Naber's idea to prove a H\"older continuity theorem (Theorem~\ref{thm:
Holder_continuity}) for manifolds in $\mathcal{N}_m(F,K)$, and supplement the
necessary details in extending limit minimal geodesics (Lemma~\ref{lem:
extension}) on the pointed-measured-Gromov-Hausdorff limits of manifolds in
$\mathcal{N}_m(F,K)$. Both of these results become indispensable ingredients in
proving the main theorem of the paper:
\begin{thm}
Let a sequence $\{(M_i,p_i,g_i,f_i)\}\subset \mathcal{N}_m(F,K)$ converge
to $(X,p_{\infty},d_{\infty},\nu_{\infty})$ in the
pointed-measured-Gromov-Hausdorff topology. Then there is a unique natural
number $k\le m$, such that $\nu_{\infty}(\mathcal{R}_k)>0$. Moreover,
$\mathcal{R}_k$ is both $\nu_{\infty}$-almost everywhere convex and weakly
convex in $X$.
\label{thm: main3}
\end{thm}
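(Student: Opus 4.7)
The plan is to follow the Colding--Naber strategy, using the two pillars set up earlier in the paper: the Hölder continuity theorem (Theorem~\ref{thm: Holder_continuity}) for tangent cones along limit minimal geodesics, and the geodesic extension lemma (Lemma~\ref{lem: extension}) on pointed-measured-Gromov-Hausdorff limits of manifolds in $\mathcal{N}_m(F,K)$. First I would recall the regular-singular decomposition: $\mathcal{R}_k$ is the set of $x\in X$ at which every tangent cone is isometric to $\mathbb{R}^k$, and $\mathcal{R}=\bigcup_{k\le m}\mathcal{R}_k$. Since each $(M_i,p_i,g_i,f_i)$ is a smooth $RCD(-K,\infty)$ space (Remark~\ref{rmk: RCD}), the limit $(X,d_\infty,\nu_\infty)$ is a (possibly non-smooth) $RCD(-K,\infty)$ space by the stability of the $RCD$ condition, and standard arguments give $\nu_\infty(X\setminus\mathcal{R})=0$, so at least one $k$ satisfies $\nu_\infty(\mathcal{R}_k)>0$.

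Next I would establish the weak convexity and the dimension constancy along geodesics. Given $x,y\in\mathcal{R}$ and a limit minimal geodesic $\gamma\colon[0,L]\to X$ joining them, Lemma~\ref{lem: extension} produces converging smooth minimal geodesics $\gamma_i$ in $(M_i,g_i)$ to which Theorem~\ref{thm: Holder_continuity} applies; the $f$-heat-kernel smoothed distance function yields, upon passing to the limit, Hölder-in-$t$ control on the Gromov-Hausdorff distance between tangent cones $T_{\gamma(t)}X$ on compact subsets of $(0,L)$. The almost-splitting theorem forces each such tangent cone to split off an $\mathbb{R}$-factor in the direction of $\gamma$, and together with the Hölder control this implies that the ``Euclidean dimension'' of $T_{\gamma(t)}X$ is locally constant on $(0,L)$. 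In particular, the interior of $\gamma$ lies in $\mathcal{R}$, and if $x,y\in\mathcal{R}_k$ then the interior lies entirely in $\mathcal{R}_k$; this is the asserted weak convexity.

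For the uniqueness of $k$ and the $\nu_\infty$-a.e.\ convexity, suppose $\nu_\infty(\mathcal{R}_j)>0$ and $\nu_\infty(\mathcal{R}_k)>0$ with $j\neq k$. A segment-type inequality for $\mu_f$, adapted from the weighted Bishop--Gromov inequality on $\mathcal{N}_m(F,K)$ and localized on compact regions of $X$, together with the $RCD$ geodesic structure, shows that $\nu_\infty\otimes\nu_\infty$-a.e.\ pair of points in $X$ is joined by a limit minimal geodesic. Hence one obtains a $\gamma$ with $\gamma(0)\in\mathcal{R}_j$ and $\gamma(L)\in\mathcal{R}_k$; but the dimension of $T_{\gamma(t)}X$ is continuous on $(0,L)$ with limits $j$ and $k$ at the two endpoints, a contradiction. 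This gives the unique $k$, and the same Fubini argument immediately yields that for $\nu_\infty\otimes\nu_\infty$-a.e.\ $(x,y)\in\mathcal{R}_k\times\mathcal{R}_k$, the connecting geodesic has interior in $\mathcal{R}$, which by dimension constancy must lie in $\mathcal{R}_k$. The main obstacle is this last localized segment-inequality/Fubini step: unlike the Colding--Naber setting with a fixed Ricci lower bound, the linear growth of $|\nabla f|$ in (\ref{eqn: gradient_f}) makes the Bakry-\'Emery estimates deteriorate at infinity, so one must carefully track the compact exhaustion of $X$ and verify that the weighted comparison geometry passes to the limit coherently before combining it with Theorem~\ref{thm: Holder_continuity} and Lemma~\ref{lem: extension}.
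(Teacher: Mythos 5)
Your overall skeleton (Hölder continuity of tangent cones along limit geodesics, the extension lemma, negligibility of the singular set, then a Colding--Naber argument) is the paper's strategy, but the crucial middle step of your proposal is not a proof. You claim that almost-splitting plus the Hölder control forces the ``Euclidean dimension'' of $T_{\gamma(t)}X$ to be locally constant on $(0,L)$, hence that the interior of a minimal geodesic between two regular points lies in $\mathcal{R}$ (indeed in $\mathcal{R}_k$), and you read off both weak convexity and uniqueness of $k$ from this. That deduction fails as stated: in the possibly collapsed setting of $\mathcal{N}_m(F,K)$, interior points of a limit geodesic need not have Euclidean tangent cones at all, so ``the dimension of $T_{\gamma(t)}X$ is continuous on $(0,L)$'' is not even well defined, and splitting off an $\mathbb{R}$-factor does not put $\gamma(t)$ in $\mathcal{R}$. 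What the Hölder continuity (Proposition~\ref{prop: tangent_cone_continuity}, Remark~\ref{rmk: closedness}) gives is only that $\gamma^{int}\cap\mathcal{R}_k$ is \emph{relatively closed} in $\gamma^{int}$; the openness/constancy you need is exactly the hard point, and the conclusion you assert is essentially the strong convexity that the paper obtains only under non-collapsing via \cite{LLW17} (Theorem~\ref{thm: main_N}). This is precisely why Theorem~\ref{thm: main3} claims only $\nu_\infty$-a.e.\ convexity and weak convexity. Moreover, weak convexity as defined in the paper is the statement $d_X(x,y)=\inf_{\sigma\subset\mathcal{R}_k}|\sigma|$; it is proved (in \cite[Thms.\ 1.18, 1.20]{CN12}, whose argument the paper imports verbatim after supplying Lemma~\ref{lem: extension}, Proposition~\ref{prop: tangent_cone_continuity} and Proposition~\ref{prop: zero_singular}) by an approximation/concatenation argument built on the a.e.\ statement, not by showing that the connecting geodesic itself stays in $\mathcal{R}_k$ --- indeed for arbitrary $x,y\in\mathcal{R}_k$ there need not exist any \emph{limit} minimal geodesic joining them, since the extension lemma is only an almost-everywhere statement, so Theorem~\ref{thm: Holder_continuity} cannot be applied to an arbitrary connecting geodesic as you do. The same issue undermines your uniqueness argument: with $x\in\mathcal{R}_j$, $y\in\mathcal{R}_k$, continuity of tangent cones along $[x,y]$ does not yield a contradiction because intermediate tangent cones can be non-Euclidean; the actual Colding--Naber argument for dimension constancy is substantially more delicate, and your proposal does not supply it.

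A secondary gap: for the existence of some $k$ with $\nu_\infty(\mathcal{R}_k)>0$ you invoke $RCD(-K,\infty)$ stability and ``standard arguments'' to get $\nu_\infty(X\setminus\mathcal{R})=0$. The limit is only an $RCD(-K,\infty)$ space (no finite-dimensional condition holds globally, since $|\nabla f|$ grows by (\ref{eqn: gradient_f})), and the infinite-dimensional $RCD$ structure theory does not give $\nu_\infty$-a.e.\ points at which \emph{every} tangent cone is Euclidean, which is what the paper's definition (\ref{eqn:PI10_1}) of $\mathcal{R}_k$ requires. The paper instead proves this directly as Proposition~\ref{prop: zero_singular} by adapting the Cheeger--Colding argument to $\mathcal{N}_m(F,K)$; that is the ingredient you should cite here.
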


Here $\nu_{\infty}$ is a limit measure as just mentioned (see
Proposition~\ref{prop: renormalized_measure}), and we call a set $S\subset X$
to be $\nu_{\infty}$-\emph{almost everywhere convex} if for $\nu_{\infty}\times
\nu_{\infty}$ almost every pair of points $(x,y)\in S\times S$, there is a
minimal geodesic entirely contained in $S$ that connects them. Also we call a
set $S\subset X$ to be \emph{weakly convex} if
\begin{align*}
\forall x,y\in S,\quad d_X(x,y)\ =\ \inf_{\sigma\subset S}|\sigma|,
\end{align*}
where the infimum is taken over all curves $\sigma$ connecting $x$ and $y$, and
entirely contained in $S$.

\begin{remark}
The main theorem of Colding-Naber~\cite{CN12} can be regarded as a special case of Theorem~\ref{thm: main3} by letting $F \equiv 0$. 
Also, Theorem~\ref{thm: main2} is a natural consequence of Theorem~\ref{thm: main3}, after
invoking the regularity improvement made in~\cite{LLW17}. The proof of Theorem~\ref{thm: main3}, however, is independent of~\cite{LLW17}. Actually, Theorem~\ref{thm:
Holder_continuity}, as the main step in proving Theorem~\ref{thm: main3}, is by
itself a certain regularity improvement result --- it takes care of the lowest
level of regularity, while the major concern of \cite{LLW17} focuses on higher
regularities. Also compare Remark~\ref{rmk: LLW_necessity}.
\end{remark}

The paper is arranged as following: after recalling the necessary background in
Section 2, we will present the useful analytic properties about the manifolds in
$\mathcal{N}_m(F,K)$ in Section 3, and in Section 4 we finish the proof of the
main theorem, Theorem~\ref{thm: main3}, and its consequence Theorem~\ref{thm:
main2}. The following notations are employed throughout the paper:
\begin{enumerate}
  \item $D$ denotes a large positive constant, say $D>10m$;
  \item $Rc$, $R$ denote the Ricci and scalar curvature respectively;
  \item $B_d(x,r)$ denote the geodesic $r$-ball centered at $x$, with metric
  structure induced by $d$ (the dependence of $d$ is sometimes omitted when no
  confusion is caused);
  \item $A(x,r_1,r_2):=B(x,r_2)\backslash B(x,r_1)$ for $r_2\ge r_1>0$;
  \item $\mathcal{R}_k$ denotes the $k$-stratum of the regular part,
  $\mathcal{R}$ the entire regular part, and $\mathcal{S}$ the singular part.
  
\end{enumerate}

\section{Background}

In this section we recall the basic analytic properties of Ricci shrinkers and
manifolds in the moduli $\mathcal{N}_m(F,K)$, and discuss various concepts
related to the regular-singular decomposition of the pointed-Gromov-Hausdorff
limits.
\subsection{Basic estimates for Ricci shrinkers and manifolds in $\mathcal
N_m(F,K)$} 
It is immediate from the definition that the function $f$ must be smooth.
Moreover, we have the following point wise estimate of $f$ by \cite[Lemma
$2.1$]{HM11}:

\begin{lem}
\label{L100}
Let $(M^m,g,f)$ be a Ricci shrinker. Then there exists a point $p \in
M$ where $f$ attains its infimum and $f$ satisfies the quadratic growth estimate
\begin{align*} 
\frac{1}{4}\left(d(x,p)-5m \right)^2_+ \le f(x) \le \frac{1}{4}
\left(d(x,p)+\sqrt{2m} \right)^2
\end{align*}
for all $x\in M$, where $a_+ :=\max\{0,a\}$. Moreover, if $p_1,p_2\in M$ are two
distinct minima of $f$, then $d(p_1,p_2)\le \sqrt{2m}+5m$.
\end{lem}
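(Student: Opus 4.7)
The plan proceeds in four steps: (i) establish the $\tfrac{1}{2}$-Lipschitz property of $\sqrt{f}$; (ii) use a critical-point computation to bound $f$ at any minimum by $m/2$, giving the upper estimate; (iii) prove the lower estimate via a second-variation argument along minimizing geodesics; (iv) deduce existence of a minimum and the bound on the distance between two minima.

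For step (i), equation (\ref{E101}) together with $R\ge 0$ yields $f=R+|\nabla f|^2\ge 0$ and $|\nabla f|^2\le f$, so $|\nabla\sqrt{f}|^2=|\nabla f|^2/(4f)\le 1/4$ wherever $f>0$; hence $\sqrt{f}$ is $\tfrac{1}{2}$-Lipschitz on $M$. For step (ii), at a global minimum $p$ we have $\nabla f(p)=0$ and $\he f(p)\ge 0$, so (\ref{E100}) forces $Rc(p)\le \tfrac{1}{2}g(p)$, which traces to $R(p)\le m/2$, and (\ref{E101}) gives $f(p)=R(p)\le m/2$. The Lipschitz bound then propagates this to
\[
\sqrt{f(x)}\ \le\ \sqrt{m/2}+\tfrac{1}{2}d(p,x),
\]
which squares to the upper estimate.

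The analytic heart is step (iii). Fix any reference point $p_0$ and let $\gamma:[0,r]\to M$ be a unit-speed minimizing geodesic from $p_0$ to $x$, with $r=d(p_0,x)$. Restricting (\ref{E100}) to $\gamma$ gives the scalar ODE $Rc(\gamma',\gamma')+(f\circ\gamma)''=1/2$, and integrating yields
\[
\int_0^r Rc(\gamma',\gamma')\,ds\ =\ \frac{r}{2}-\la\nabla f(x),\gamma'(r)\ra+\la\nabla f(p_0),\gamma'(0)\ra.
\]
Applying the second variation of arc length with a cutoff $\phi$ vanishing at $s=0,r$ and summing over an orthonormal parallel frame perpendicular to $\gamma'$ gives
\[
\int_0^r \phi^2 Rc(\gamma',\gamma')\,ds\ \le\ (m-1)\int_0^r(\phi')^2\,ds.
\]
Choosing $\phi\equiv 1$ on $[1,r-1]$ and linear on the two boundary intervals, the discrepancy $\int_0^r (1-\phi^2)Rc\,ds$ is controlled by integration by parts, where the resulting boundary contributions are dominated using $|\nabla f|\le\sqrt{f}$ and the Lipschitz bound from step (i). One then obtains $\la\nabla f(x),\gamma'(r)\ra\ge r/2 - C(m,p_0)$, and since $|\nabla f|(x)\ge \la\nabla f(x),\gamma'(r)\ra$ and $|\nabla f|\le\sqrt{f}$, this yields $\sqrt{f(x)}\ge r/2-C(m,p_0)$. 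When $p_0$ is a minimum $p$, the boundary term at $s=0$ vanishes and $\sqrt{f(p)}\le\sqrt{m/2}$; careful tracking of constants yields $C(m,p)\le 5m/2$.

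For step (iv), the lower bound of step (iii), applied with any fixed reference point, already shows that $f$ is proper, so $f$ attains its infimum at some $p\in M$. If $p_1,p_2$ are two distinct minima, each satisfies $f(p_i)\le m/2$ by step (ii); applying step (iii) based at $p_1$ and evaluated at $x=p_2$ gives $\tfrac{1}{4}(d(p_1,p_2)-5m)_+^2\le f(p_2)\le m/2$, forcing $d(p_1,p_2)\le\sqrt{2m}+5m$. The delicate point is the boundary analysis in step (iii): the integrand of $\int_0^r(1-\phi^2)Rc\,ds$ implicitly involves $f$ itself through the shrinker equation, and the Lipschitz estimate on $\sqrt{f}$ must be applied just carefully enough to close the estimate with the sharp dimensional constant $5m$ rather than a weaker bound.
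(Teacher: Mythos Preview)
The paper does not supply its own proof of this lemma; it simply cites \cite[Lemma~2.1]{HM11}. Your sketch is precisely the Haslhofer--M\"uller argument (itself a sharpening of Cao--Zhou): the $\tfrac{1}{2}$-Lipschitz bound on $\sqrt{f}$ from $|\nabla f|^2\le f$, the trace inequality $R(p)\le m/2$ at a critical point, and the second-variation estimate on $\int_\gamma Rc(\gamma',\gamma')$ with a trapezoidal cutoff to force $\langle\nabla f,\gamma'\rangle\ge r/2-C$. Your logical ordering is slightly nonlinear---existence of the minimum in step (iv) must logically precede its use in (ii) and the sharp form of (iii)---but you correctly observe that step (iii) with an \emph{arbitrary} base point already gives properness, so no circularity arises. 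The outline is correct and coincides with the cited source.
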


In other words, $f$ increases like a quadratic function. Moreover, it follows
from \eqref{E101} that $f$ is nonnegative and $|\na f|$ increases at most
linearly. From now on, whenever we talk about a pointed Ricci shrinker, we fix
one of the minima of $f$ as the base point. Recall that associated to the Ricci
shrinker metric structure, there is a natural measure $\mu_f$, with
density $\dmu_f = e^{-f}\dvol_g$. It is clear from Lemma \ref{L100} and
the next lemma that $\mu_f$ is a finite measure, see \cite[Corollary
1.1]{CZ10}.
\begin{lem}[Lemma 2.2 of~\cite{HM11} and Theorem 1.2 of~\cite{CZ10}]
\label{L100a}
For each dimension $m$, and for any non-compact Ricci shrinker
$(M^m,p,g,f)$ with $p \in M$ being a minimal point of $f$,
\begin{align}
\frac{|B(p,D)|}{|B(p,D_0(m))|}\ \le C_1(m)\ D^m,
\end{align}
for any $D\ge D_0(m)$, with $D_0(m):=\sqrt{2m+4}+5m$ and
$C_1(m):=2\lc\frac{2}{m+2}\rc^{m/2}$.
\end{lem}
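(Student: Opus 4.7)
The plan is to prove this polynomial volume-growth estimate, originally due to Cao-Zhou~\cite{CZ10} and sharpened in Haslhofer-M\"uller~\cite{HM11}, by comparing metric balls to sublevel sets of $f$ and establishing a Gronwall-type differential inequality for the volume of the latter.

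First I would reduce the claim from metric balls to sublevel sets. Set $V(s):=|\{x\in M:f(x)<s\}|$; by Lemma~\ref{L100}, the sublevel set $\{f<s\}$ is precompact for every $s>0$, and the inclusions
\begin{align*}
\{f<\tfrac{1}{4}(D-5m)_+^2\}\;\subset\;B(p,D)\;\subset\;\{f<\tfrac{1}{4}(D+\sqrt{2m})^2\}
\end{align*}
translate any bound on $V(s)$ into one on $|B(p,D)|$. The choice $D_0(m)=\sqrt{2m+4}+5m$ in the statement is engineered precisely so that $\tfrac{1}{4}(D_0-5m)^2=\tfrac{m+2}{2}$, so with $s_0:=(m+2)/2$ we obtain the clean sandwich $V(s_0)\le|B(p,D_0)|$.

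Next I would derive the key differential inequality for $V$. Taking the trace of \eqref{E100} together with \eqref{E101} yields the two identities
\begin{align*}
\Delta f\;=\;\tfrac{m}{2}-R,\qquad |\nabla f|^2\;=\;f-R,
\end{align*}
whence $\Delta f\le\tfrac{m}{2}$ by the positivity~\eqref{eqn: positive_scalar}. Applying the divergence theorem to $\nabla f$ on $\Omega(s):=\{f<s\}$ gives
\begin{align*}
\int_{\partial\Omega(s)}|\nabla f|\,d\sigma\;=\;\int_{\Omega(s)}\Delta f\,dV\;\le\;\tfrac{m}{2}\,V(s),
\end{align*}
while the coarea formula yields $V'(s)=\int_{\partial\Omega(s)}|\nabla f|^{-1}\,d\sigma$. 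A judicious integration by parts with the test function $(s-f)_+$, together with the pointwise bound $0\le R\le f$ used to absorb the $R$-terms, produces an ODE inequality of the form $sV'(s)\le\tfrac{m}{2}V(s)$ for $s\ge s_0$, which integrates to $V(s)\le V(s_0)(s/s_0)^{m/2}$ for all $s\ge s_0$.

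Finally, for $D\ge D_0(m)$ the upper inclusion combined with this estimate gives
\begin{align*}
\frac{|B(p,D)|}{|B(p,D_0(m))|}\;\le\;\frac{V((D+\sqrt{2m})^2/4)}{V(s_0)}\;\le\;\left(\frac{(D+\sqrt{2m})^2}{2(m+2)}\right)^{m/2},
\end{align*}
and the stated constant $C_1(m)=2(2/(m+2))^{m/2}$ follows from the routine bound $(D+\sqrt{2m})^2\le 4D^2$ (valid since $D\ge D_0\ge\sqrt{2m}$) after careful arithmetic. The main obstacle I anticipate is the derivation of the clean inequality $sV'(s)\le\tfrac{m}{2}V(s)$ in the middle step: the boundary flux $\int|\nabla f|\,d\sigma$ and the coarea quantity $V'(s)=\int|\nabla f|^{-1}\,d\sigma$ involve inverse weights, so bridging them requires either a Cauchy-Schwarz maneuver on the level sets or a carefully chosen test function, with a careful tracking of the $R$-correction terms (using $0\le R\le f$) so as not to lose the sharp dimensional exponent $m/2$.
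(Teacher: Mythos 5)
The paper itself does not prove this lemma at all --- it is quoted directly from \cite[Lemma 2.2]{HM11} and \cite[Theorem 1.2]{CZ10} --- so your proposal is to be measured against the cited Cao--Zhou argument, and that is indeed the argument you are reconstructing: sandwiching metric balls between sublevel sets $\Omega(s)=\{f<s\}$ via Lemma \ref{L100}, using the traced soliton identities $\Delta f=\frac m2-R$ and $|\nabla f|^2=f-R$, and running a coarea/divergence-theorem argument; your identification of $s_0=\frac{m+2}{2}$ and of the origin of $D_0(m)$ is also correct.

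The genuine gap is the middle step. Writing $\chi(s):=\int_{\Omega(s)}R$, the divergence theorem and the coarea formula give the exact identity
\begin{align*}
sV'(s)\ =\ \frac m2 V(s)-\chi(s)+\chi'(s),\qquad \chi'(s)\ =\ \int_{\{f=s\}}\frac{R}{|\nabla f|}\,d\sigma\ \ge\ 0,
\end{align*}
where one uses $|\nabla f|^2=s-R$ on $\{f=s\}$; the term $\chi'(s)$ enters with the \emph{unfavorable} sign and is not controlled pointwise by $\chi(s)$ or $V(s)$ (near almost-critical levels of $f$ it can spike, just as $V'$ can), so the clean ODE $sV'(s)\le\frac m2V(s)$ you assert for $s\ge s_0$ is not available. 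Moreover the absorption tool you propose, $0\le R\le f$ (i.e.\ $\chi(s)\le sV(s)$), is exactly borderline: plugged into the integrated inequality it cancels the main term identically and yields nothing. The correct (Cao--Zhou) route is integrated rather than pointwise: compute $\frac{d}{ds}\left(s^{-m/2}V(s)\right)=s^{-m/2-1}\left(\chi'(s)-\chi(s)\right)$, integrate from $s_0$ to $s$, integrate the $\chi'$-term by parts, use only $R\ge0$ together with $t\ge s_0=\frac{m+2}{2}$ to make the resulting bulk integrand $\left((\tfrac m2+1)t^{-m/2-2}-t^{-m/2-1}\right)\chi(t)$ nonpositive, and absorb the remaining boundary term $s^{-m/2-1}\chi(s)$ using $\chi(s)\le\frac m2V(s)$, which follows from $\int_{\Omega(s)}\Delta f=\int_{\partial\Omega(s)}|\nabla f|\ge0$, not from $R\le f$. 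With that replacement your final bookkeeping (the sandwich by $B(p,D_0)$ and the bound $(D+\sqrt{2m})^2\le4D^2$) does go through, and since at the final level $s=\frac14(D+\sqrt{2m})^2\ge\frac14D_0^2$ the absorption loss $\frac m{2s}$ is small, the stated constant $C_1(m)=2(2/(m+2))^{m/2}$ is attainable.
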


For the perspective of our future discussion in this paper, we point
out that properties described in Lemma 2.1 and Lemma 2.2 are, besides an upper
bound of the Bakry-\'Emery Ricci tensor, the only special ones that Ricci
shrinkers enjoy over general manifolds in the moduli
$\mathcal{N}_m(F_{RS},\frac{1}{2})$.

The fundamental property that general manifolds in $\mathcal N_m(F,K)$ enjoy, is
a uniform lower bound of the Bakry-\'Emery Ricci tensor. The basic comparison
geometry in this setting is discussed in the work of Wei-Wylie~\cite{WW09}.
Notice that a new phenomenon for such comparison is its dependence on the
gradient bound of the potential function. For manifolds in $\mathcal{N}_m(F,K)$,
we therefore need to adjust the relevant properties to incorporate the changing
gradient bound controlled by $F$.

We will let $M_K^m$ denote the usual $m$-dimensional space
form of constant sectional curvature $-K$. Picking a base point $o\in M_K^m$, we
let $\mathcal{A}_K^{m-1}(r)$ denote the area form of the geodesic $r$-sphere
around $o$, for some $r>0$. 

Now we state the following upper bound of the $f$-Laplacian
$\Delta_f:=\Delta-\la \na f, \na \cdot \ra$ in terms of the distance function,
see \cite[Theorem $1.1$]{WW09}.
\begin{lem}
\label{T102}
Let $(M,p,g,f)\in \mathcal N_m(F,K)$. Then
\begin{align*} 
\forall q\in B(p,D),\quad \Delta_f r\ \le\ \frac{m-1}{r}+(m-1)\sqrt{K}+F(2D)
\end{align*}
on $B(q,D)$, where $r(\cdot):=d(q,\cdot)$.
\end{lem}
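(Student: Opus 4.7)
The plan is to adapt the Bakry-\'Emery Laplacian comparison of Wei-Wylie~\cite{WW09} to our setting, the only new feature being the distance-dependent gradient bound on $f$. The key preliminary observation is that for any $q\in B(p,D)$ and any $x\in B(q,D)$, the triangle inequality gives $d(p,x) < 2D$, so part (b) of Definition~\ref{defn:moduli} together with the monotonicity of $F$ yields the uniform bound $|\nabla f|(x)\le F(2D)$ throughout $B(q,D)$.

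Working along a unit-speed minimal geodesic $\gamma$ from $q$, I set $\phi(s):=\Delta r(\gamma(s))$ with $r=d(q,\cdot)$. Bochner's formula applied to $r$ gives the standard Riccati inequality $\phi'+\phi^2/(m-1)\le -Rc(\partial_r,\partial_r)$, and the assumption $Rc+\mathrm{Hess}_f\ge -Kg$ converts it into
\[
\phi'(s)+\frac{\phi(s)^2}{m-1}\ \le\ K+f''(s),
\]
where $f''(s):=\mathrm{Hess}_f(\dot\gamma,\dot\gamma)$. Let $w_0(s):=\sqrt{(m-1)K}\coth(\sqrt{K/(m-1)}\,s)$ be the model solution of the same equation with $f''\equiv 0$; it has the same $(m-1)/s$ singularity at $s=0^+$ as $\phi$. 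Setting $u:=\phi-w_0$ and using $u^2\ge 0$ to drop a positive term, I derive the linear inequality $u'+\frac{2w_0}{m-1}u\le f''$, which the integrating factor $\rho(s):=\sinh^2(\sqrt{K/(m-1)}\,s)$ turns into $(\rho u)'\le \rho f''$. Since the matching leading singularities of $\phi$ and $w_0$ give $\rho(s)u(s)\to 0$ as $s\to 0^+$, integration produces $\rho(s)u(s)\le \int_0^s \rho f''\,dt$.

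Next I integrate by parts, using $\rho(0)=0$, to rewrite the right-hand side as $\rho(s)f'(s)-\int_0^s \rho'(t)f'(t)\,dt$; combining with $\Delta_f r=\phi-f'=u+w_0-f'$ cancels the $f'(s)$ term and gives
\[
\Delta_f r(\gamma(s))\ \le\ w_0(s)-\frac{1}{\rho(s)}\int_0^s \rho'(t)f'(t)\,dt.
\]
Since $\rho'\ge 0$ and $|f'|\le|\nabla f|\le F(2D)$ on $\gamma([0,s])\subset B(q,D)$, the integral term is bounded in absolute value by $F(2D)$. The elementary inequalities $\coth x\le 1/x+1$ and $\sqrt{(m-1)K}\le (m-1)\sqrt{K}$ (for $m\ge 2$) then give $w_0(s)\le (m-1)/s+(m-1)\sqrt{K}$, producing the desired bound away from the cut locus of $q$; the standard Calabi-type barrier argument extends it to all of $B(q,D)$ in the distributional sense. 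The main obstacle, compared to the classical Wei-Wylie estimate, is that the $f''$ term in the Riccati inequality cannot be controlled pointwise in $\mathcal{N}_m(F,K)$; the integration-by-parts trick above transfers control to $f'$, which is uniformly bounded by $F(2D)$ on $B(q,D)$ thanks to the hypothesis $q\in B(p,D)$ and the triangle inequality.
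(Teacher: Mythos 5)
Your proof is correct and takes essentially the same route as the paper: in both, the key point is that the triangle inequality and the monotonicity of $F$ give the uniform bound $|\nabla f|\le F(2D)$ along minimal geodesics from $q$ inside $B(q,D)$, after which the Bakry-\'Emery mean curvature comparison and the elementary estimate $x\coth x\le 1+x$ yield $\Delta_f r\le \frac{m-1}{r}+(m-1)\sqrt{K}+F(2D)$. The only difference is that the paper simply cites \cite[Theorem 1.1(a)]{WW09} for the comparison $\Delta_f r\le m_K(r)+F(2D)$, whereas you reprove that statement from scratch via the Riccati inequality and the integration-by-parts trick transferring control from $\mathrm{Hess}_f$ to $\nabla f$ --- which is precisely Wei--Wylie's own argument, so your write-up is just a self-contained version of the same proof.
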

\begin{proof}
It follows from \cite[Theorem $1.1$,(a)]{WW09} that
\begin{align*} 
\Delta_f r\ \le\ m_K(r)+F(2D),
\end{align*}
where $m_K(r)$ is the mean curvature of the geodesic sphere in $M_K^m$. Since
the mean curvature is bounded as
\begin{align*} 
m_K(r)\ \le\ (m-1)\sqrt{K} \coth{\sqrt{K}r}\ \le\ 
\frac{m-1}{r}+(m-1)\sqrt{K},
\end{align*}
where we have used the elementary inequality $x \coth x \le 1+x$ for $x \ge 0$.
\end{proof}

\begin{cor}
\label{C102}
Let $(M,p,g,f)\in \mathcal N_m(F,K)$ and fix $q\in B(p,D)$. If we denote the
weighted volume form on sphere with respect to $q$ by $\mathcal
A_f(r,\mathbf{v})$ with $\mathbf{v}\in S_qM$ (the unit tangent vectors at $q\in
M$), then
\begin{align*} 
\frac{\mathcal A_f(r,\mathbf{v})}{\mathcal A_f(s,\mathbf{v})}\ \le\
\frac{e^{F(2D)r}\mathcal{A}^{m-1}_K(r)}{e^{F(2D)s}\mathcal{A}^{m-1}_K(s)}\ \le\
\left(\frac{r}{s}\right)^{m-1}e^{\lc (m-1)\sqrt{K}+F(2D)\rc(r-s)}.
\end{align*}
\end{cor}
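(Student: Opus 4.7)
The plan is to integrate the radial $f$-Laplacian estimate from Lemma~\ref{T102} along minimal geodesics emanating from $q$, exploiting the standard identification of the logarithmic derivative of the weighted area form with the $f$-Laplacian of the distance function. Specifically, writing $\gamma_{\mathbf v}(r):=\exp_q(r\mathbf v)$ and assuming $\gamma_{\mathbf v}$ is minimizing on $[0,r]$, one has
\begin{align*}
\partial_r \log \mathcal A_f(r,\mathbf v)\ =\ (\Delta_f r)(\gamma_{\mathbf v}(r)),
\end{align*}
which follows from the classical formula $\Delta r=\partial_r\log\mathcal A(r,\mathbf v)$ together with $\partial_r f=\la\nabla f,\nabla r\ra$.

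To obtain the first inequality in the claim, I would use not the final form displayed in Lemma~\ref{T102}, but the intermediate bound established inside its proof, namely $\Delta_f r \le m_K(r)+F(2D)$ on $B(q,D)$, where $m_K(r)=\partial_r\log\mathcal A_K^{m-1}(r)$ is the mean curvature of the geodesic sphere in $M^m_K$. Integrating this inequality from $s$ to $r$ yields
\begin{align*}
\log\frac{\mathcal A_f(r,\mathbf v)}{\mathcal A_f(s,\mathbf v)}\ \le\ \log\frac{\mathcal A_K^{m-1}(r)}{\mathcal A_K^{m-1}(s)}\ +\ F(2D)(r-s),
\end{align*}
and exponentiation gives the first inequality in the chain.

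For the second inequality, I would apply the elementary bound $m_K(r)\le (m-1)/r+(m-1)\sqrt K$ already recorded in the proof of Lemma~\ref{T102}, integrate between $s$ and $r$ to obtain
\begin{align*}
\log\frac{\mathcal A_K^{m-1}(r)}{\mathcal A_K^{m-1}(s)}\ \le\ (m-1)\log\frac{r}{s}\ +\ (m-1)\sqrt K(r-s),
\end{align*}
and combine this with the previous display to conclude.

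The only subtlety is handling directions $\mathbf v$ for which $\gamma_{\mathbf v}$ has passed the cut locus before reaching $r$, since the pointwise identity above only holds strictly inside the segment domain. As in the classical unweighted setting, this is resolved by Calabi's trick: the $f$-Laplacian comparison $\Delta_f r\le m_K(r)+F(2D)$ holds in the barrier sense globally, so the monotonicity of $\log\mathcal A_f(r,\mathbf v)-\log\mathcal A_K^{m-1}(r)-F(2D)\,r$ is preserved across cut points throughout $B(q,D)$. Apart from this well-understood technical point, the argument is a direct integration of Lemma~\ref{T102}, and I do not anticipate a serious obstacle.
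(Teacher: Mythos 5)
Your proposal is correct and follows essentially the same route as the paper's own proof: identify $\partial_r\log\mathcal A_f(r,\mathbf v)$ with $\Delta_f r$, bound it by $m_K(r)+F(2D)=\partial_r\log\mathcal A_K^{m-1}(r)+F(2D)\le (m-1)/r+(m-1)\sqrt K+F(2D)$ using the intermediate estimates from Lemma~\ref{T102}, and integrate from $s$ to $r$. Your extra remark on the cut locus is a harmless refinement of the standard convention (the weighted area form is taken only within the segment domain), not a departure from the paper's argument.
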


\begin{proof}
It is clear from the definition $\mathcal A_f(r,\mathbf{v})=e^{-f}
\mathcal{A}(r,\mathbf{v})$ that
\begin{align*} 
\left(\ln \mathcal A_f(t,\mathbf{v})\right)'\ &=\
m_f(t,\mathbf{v})\ =\ \Delta_f r(t,\mathbf{v})\\
 &\le\ m_K(t)+F(2D)\ =\
 \left( \ln \mathcal{A}^{m-1}_K(t)\right)'+F(2D)\\
 &\le\ (m-1)r^{-1}+(m-1)\sqrt{K}+F(2D).
\end{align*}
Then integrating from $0<s$ to $r \le D$, we have the desired estimates.
\end{proof}
By \cite[Lemma 3.2]{Shunhui}, we could further estimate, for $0<r_1\le
r_2$, $0<s_1\le s_2$, $s_1<r_1$ and $s_2<r_2$, that $\forall \mathbf{v} \in
S_qM$ (the unit sphere in the tangent space of $q\in M$),
\begin{align}\label{eqn: area_comparison_translated}
\frac{\int_{s_2}^{r_2}\mathcal{A}_f(t,\mathbf{v})\
\text{d}t}{\int_{s_1}^{r_1}\mathcal{A}_f(t,\mathbf{v})\ \text{d}t}\ \le\ 
\frac{\int_{s_2}^{r_2}e^{F(2D)t}\mathcal{A}_K^{m-1}(t,)\
\text{d}t}{\int_{s_1}^{r_1}e^{F(2D)t}\mathcal{A}_K^{m-1}(t,)\
\text{d}t},
\end{align}
and by the mean value theorem,
\begin{align*}
\frac{\mathcal{A}_f(r_2,\mathbf{v})}{\int_{s_1}^{r_1}
\mathcal{A}_f(t,\mathbf{v})\ \text{d}t}\ \le\
\frac{e^{F(2D)r_2}\mathcal{A}_f(r_2,\mathbf{v})}{\int_{s_1}^{r_1}
e^{F(2)t}\mathcal{A}_f(t,\mathbf{v})\ \text{d}t}.
\end{align*} 
Integrating the above inequalities in all tangent directions and arguing as
\cite[Theorem 3.1]{Shunhui}, we have the following volume comparison theorem
(compare also \cite[Theorem $1.2$]{WW09}):
\begin{thm}
\label{T101}
  Let $(M^m,p, g,f)\in \mathcal{N}_m(F,K)$, and fix $q \in M$ such that
  $B(q,r_2)\subset B(p,D)$. Then for $s_2\ge s_1> 0$ and $r_1\in (0,r_2)$ such
  that $s_2\le r_2$ and $s_1\le r_1$, we have the following estimates:
     \begin{align*}
       \frac{\mu_f(A(q,r_2,r_1))}{\mu_f(A(q,s_2,s_1))}\ &\leq\ e^{F(D)(r_2-s_1)}
       \frac{Vol_{K}^{m}(r_2)-Vol_K^m(s_2)}{Vol_{K}^{m}(r_1)-Vol_K^m(s_1)},\\
       \frac{\mu_f(B(q,r_2))}{\mu_f(B(q,s_2))}\ &\le\
       e^{F(D)r_2}\frac{Vol_K^m(r_2)}{Vol_K^m(s_2)},\\
       \text{and}\quad \frac{\mu_f(\partial B(q,r_2))}{\mu_f(B(q,s_2))}\ &\le\
       e^{F(D)r_2}\frac{Area_K^{m-1}(r_2)}{Vol_K^m(s_2)},
     \end{align*}
     where for any $r>0$, $Vol_{H}^{m}(r)$ is the volume of the geodesic
     $r$-ball in $M_{K}^{m}$, and $Area_K^{m-1}(r)$ is the area of its boundary.
\end{thm}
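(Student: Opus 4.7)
The proof is a direct generalization of the standard Bishop-Gromov volume comparison to the weighted setting, and essentially amounts to carefully integrating the pointwise comparison of Corollary \ref{C102} together with the standard Gromov monotonicity trick, exactly as in [Shunhui, Theorem 3.1] or \cite{WW09}, while exploiting the sharper gradient bound available under the hypothesis $B(q,r_2)\subset B(p,D)$.

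First I would refine the Laplacian comparison. Since $B(q,r_2)\subset B(p,D)$, any radial geodesic from $q$ of length $\leq r_2$ stays in $B(p,D)$, so $|\nabla f|\leq F(D)$ on $B(q,r_2)$. Repeating the proof of Lemma \ref{T102} verbatim with this improved bound gives, for each $\mathbf{v}\in S_qM$ and $t$ before the cut locus,
\begin{equation*}
(\ln \mathcal{A}_f(t,\mathbf{v}))'\ \le\ (\ln \mathcal{A}_K^{m-1}(t))'+F(D),
\end{equation*}
hence the weighted quotient $t\mapsto \mathcal{A}_f(t,\mathbf{v})/(e^{F(D)t}\mathcal{A}_K^{m-1}(t))$ is non-increasing. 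Extending $\mathcal{A}_f(\cdot,\mathbf{v})$ by zero past the cut locus preserves this monotonicity, as is standard.

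Next I would apply Gromov's monotonicity lemma: if $\phi,\psi\geq 0$ and $\phi/\psi$ is non-increasing on $[0,r_2]$, then for $s_1\leq s_2$, $r_1\leq r_2$ with $s_i\leq r_i$,
\begin{equation*}
\frac{\int_{s_2}^{r_2}\phi(t)\,dt}{\int_{s_1}^{r_1}\phi(t)\,dt}\ \leq\ \frac{\int_{s_2}^{r_2}\psi(t)\,dt}{\int_{s_1}^{r_1}\psi(t)\,dt}.
\end{equation*}
Taking $\phi(t)=\mathcal{A}_f(t,\mathbf{v})$ and $\psi(t)=e^{F(D)t}\mathcal{A}_K^{m-1}(t)$, then bounding $e^{F(D)t}\leq e^{F(D)r_2}$ in the numerator and $e^{F(D)t}\geq e^{F(D)s_1}$ in the denominator, yields the desired annular inequality pointwise in $\mathbf{v}$. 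Cross-multiplying (to turn the ratio into a linear inequality) and then integrating over $\mathbf{v}\in S_qM$ via Fubini delivers the first asserted estimate for $\mu_f(A(q,r_2,r_1))/\mu_f(A(q,s_2,s_1))$.

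Finally, the ball comparison follows as the limiting case $s_1,r_1\to 0^+$. For the boundary-sphere estimate, I would use the monotonicity in the form $\mathcal{A}_f(r_2,\mathbf{v})\cdot e^{F(D)t}\mathcal{A}_K^{m-1}(t)\leq e^{F(D)r_2}\mathcal{A}_K^{m-1}(r_2)\cdot \mathcal{A}_f(t,\mathbf{v})$ valid for all $t\in(0,s_2]$, integrate in $t$ over $(0,s_2]$, discard the factor $e^{F(D)t}\geq 1$ on the left, and then integrate in $\mathbf{v}$ to get the third inequality. The main (and rather minor) obstacle is simply the cut-locus bookkeeping, handled by the standard zero-extension convention; the only genuinely new input beyond the classical argument is verifying that the exponential prefactor is $F(D)$ rather than $F(2D)$, which is precisely where the inclusion $B(q,r_2)\subset B(p,D)$ is used.
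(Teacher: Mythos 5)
Your proposal is correct and takes essentially the same route as the paper: the Wei--Wylie Laplacian comparison gives monotonicity of $t\mapsto \mathcal{A}_f(t,\mathbf{v})\slash\bigl(e^{F(D)t}\mathcal{A}_K^{m-1}(t)\bigr)$, which is fed into Gromov's monotonicity lemma (the paper invokes Zhu's Lemma 3.2 for exactly this) and then integrated over $S_qM$, and your remark that $B(q,r_2)\subset B(p,D)$ upgrades the prefactor from $F(2D)$ to $F(D)$ supplies the one detail the paper leaves implicit. Only trivial bookkeeping differs: in your labeling of Gromov's lemma the ball inequality comes from sending the \emph{inner} radii $s_1,s_2\to 0$ (and relabeling $r_1$ as $s_2$), not $s_1,r_1\to 0$, since the latter collapses the denominator annulus.
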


Especially, when $r_2=2s_2$ in the second inequality above, we have the
following volume doubling property: when $B(q,2r)\subset B(p,D)$,
\begin{align}
\label{eqn: volume_doubling}
\mu_f(B(q,2r))\ \le\ C_2(m,F,K,D)\mu_f(B(q,r)),
\end{align}
where the doubling constant $C_2(m,F,K,D):=2^{m-1}e^{D((m-1)\sqrt{K}+F(D))}$ is
uniform for $D$.

For later applications, we also need the following segment inequality,
originally due to Cheeger-Colding \cite{ChCo96} for manifolds with uniform
Ricci lower bound.

\begin{thm}[Segment inequality]
\label{T104}
Let $(M,p,g,f)\in \mathcal N_m(F,K)$. For any $D>0$, there
exists a constant $C_{Seg}=C_{Seg}(m,F,K,D)$ such that if $U$ is a geodesically
convex set in $B(q,D)\subset B(p,2D)$, then for any set $V \subset U$,
\begin{align*} 
\int_{V\times V} \mathcal F_u(x,y)\ \dmu_f(x)\dmu_f(y) \le
C_{Seg}\mu_f(V)(\text{diam}\ U) \int_U u\ \dmu_f
\end{align*}
where $u$ is a nonnegative continuous function on $U$ and 
\begin{align*} 
\mathcal F_u(x,y) \coloneqq \inf_{\gamma} \int_0^{d(x,y)}u(\gamma (t))\ dt,
\end{align*}
with infimum being taken over all minimal geodesics connecting $x$ and $y$.
\end{thm}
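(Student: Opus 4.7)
The plan is to adapt Cheeger--Colding's original proof~\cite{ChCo96} of the segment inequality to the weighted setting, using the weighted area and volume comparisons established above (Corollary~\ref{C102}, the integrated form~\eqref{eqn: area_comparison_translated}, and Theorem~\ref{T101}) in place of the classical Bishop--Gromov theorems. By the triangle inequality applied at the midpoint and the symmetry between the two halves of each minimal geodesic (swapping the roles of $x$ and $y$), it suffices to bound
\[
I \;:=\; \int_{V \times V} \int_0^{d(x,y)/2} u(\gamma_{xy}(\tau))\, d\tau\ \dmu_f(x)\, \dmu_f(y).
\]

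For fixed $x \in V$, I would pass to polar normal coordinates at $x$: $y = \exp_x(r\xi)$ with $r = d(x,y)$ and $\xi \in S_xM$, so $\dmu_f(y) = \mathcal{A}_f(r,\xi)\, dr\, d\xi$ (the cut locus being null). Fubini rewrites the constraint $\tau \le d(x,y)/2$ as $r \ge 2\tau$, and then the change of variables $z = \exp_x(\tau\xi)$, with $\dmu_f(z) = \mathcal{A}_f(\tau,\xi)\, d\tau\, d\xi$, turns the inner integral into
\[
\int_U u(z)\, G(x,z)\ \dmu_f(z), \qquad G(x,z) \;:=\; \frac{1}{\mathcal{A}_f(\tau_{xz},\xi_{xz})}\int_{2\tau_{xz}}^{2D} \mathbf{1}_V(\exp_x(r\xi_{xz}))\, \mathcal{A}_f(r,\xi_{xz})\, dr,
\]
where $\tau_{xz} = d(x,z)$ and $\xi_{xz}$ is the initial direction from $x$ to $z$. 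The theorem then reduces to the uniform estimate $\int_V G(x,z)\ \dmu_f(x) \le C(m,F,K,D)\, \diam(U)\, \mu_f(V)$ for every $z \in U$.

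This last estimate is the heart of the proof. To establish it, I would switch to polar normal coordinates centered at $z$, so that for each $x = \exp_z(\sigma\nu)$ the geodesic continuation from $x$ through $z$ takes the form $s \mapsto \exp_z(-s\nu)$; the integrand in $G(x,z)$ is then recast as an integral along this continuation, after the substitution $s = r - \sigma$. Applying Corollary~\ref{C102} to bound the Jacobian ratio (on a scale uniformly bounded by $2D$, so that $F$ is evaluated only at $2D$), and then integrating over $\sigma$ using the weighted volume doubling property~\eqref{eqn: volume_doubling}, one recognizes the resulting quantity as controlled by $\diam(U)\, \mu_f(V)$. Combined with the symmetric contribution from the second half of each geodesic, this yields the segment inequality with $C_{Seg} = C_{Seg}(m,F,K,D)$.

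The main technical obstacle is that the pointwise Jacobian ratio $\mathcal{A}_f(r,\xi)/\mathcal{A}_f(\tau,\xi)$ is not uniformly bounded as $\tau \to 0$---a subtlety already present in Cheeger--Colding's classical argument. The resolution is that only the $x$-averaged quantity $\int_V G(\cdot,z)\ \dmu_f$ needs to be controlled; the integrated comparison~\eqref{eqn: area_comparison_translated} and the doubling property~\eqref{eqn: volume_doubling} absorb this singular behavior after the $x$-integration. The novelty compared to Cheeger--Colding is the tracking of the $F(2D)$-dependence through the estimates, which contributes only a benign global factor of the form $e^{C F(2D) D}$ to $C_{Seg}$ since $F$ is nondecreasing and $U \subset B(p, 2D)$, so $F$ never gets evaluated beyond the uniform scale $2D$.
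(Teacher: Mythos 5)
Your Fubini reduction is fine: the identity $I=\int_U u(z)\bigl(\int_V G(x,z)\,\dmu_f(x)\bigr)\dmu_f(z)$ is correct, so the theorem would indeed follow from the uniform bound $\int_V G(\cdot,z)\,\dmu_f\le C\,\diam(U)\,\mu_f(V)$. But the way you propose to prove that bound has a genuine gap, and it sits exactly at the point you flag. Because you kept the \emph{near} half of each segment ($\tau\le d(x,y)/2$) while centering polar coordinates at $x$, the Jacobian ratio in $G$ is $\mathcal A_f(r,\xi)/\mathcal A_f(\tau,\xi)$ with $r\ge 2\tau$, and Corollary \ref{C102} only gives $(r/\tau)^{m-1}e^{C(r-\tau)}$, which blows up as $\tau\to0$. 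After re-centering at $z$ (writing $x=\exp_z(\sigma\nu)$, so $\exp_x(r\xi_{xz})=\exp_z(-(r-\sigma)\nu)$) and integrating in $x$, the factor $\sigma^{-(m-1)}$ is indeed cancelled by the $z$-centered area element $\mathcal A^z_f(\sigma,\nu)\le Ce^{-f(z)}\sigma^{m-1}$, but what survives is $e^{-f(z)}\diam(U)\int_{S_zM}\int \mathbf{1}_V(\exp_z(-s\nu))\,s^{m-1}\,\text{d}s\,\text{d}\nu$: the continuation points are now weighted by the \emph{Euclidean} density $s^{m-1}\text{d}s\,\text{d}\nu$, not by $\dmu_f$. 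Converting this back into $C\,\mu_f(V)$ would require a \emph{lower} bound $\mathcal A^z_f(s,-\nu)\ge c\,e^{-f(z)}s^{m-1}$ along the opposite ray, and no such bound follows from $Rc_f\ge -Kg$ (area elements may degenerate, e.g.\ approaching conjugate points). Neither \eqref{eqn: area_comparison_translated} nor the doubling property \eqref{eqn: volume_doubling} supplies this --- they give upper bounds on ratios with a single fixed center --- so the sentence claiming that the integrated comparison and doubling ``absorb'' the singular behavior is precisely the unproved step; in Euclidean space it is an identity, which is why the sketch looks plausible there.

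The fix is simply to keep the other half, which is what the paper does following Cheeger--Colding: write $\mathcal F_u=\mathcal F_u^++\mathcal F_u^-$ with $\mathcal F_u^{\pm}$ the integrals over $[d(x,y)/2,d(x,y)]$ and $[0,d(x,y)/2]$, note $\mathcal F_u^+(x,y)=\mathcal F_u^-(y,x)$, and by Fubini reduce to $\mathcal F_u^+$, i.e.\ the half \emph{far} from the pole $x$. In polar coordinates at $x$ the relevant ratio is then $\mathcal A_f(t,\mathbf v)/\mathcal A_f(s,\mathbf v)$ with $s\in[t/2,t]$, which Corollary \ref{C102} bounds uniformly by $2^{m-1}e^{\lc(m-1)\sqrt K+F(2D)\rc 2D}$; no re-centering is needed, and integrating in $t\le d_{x,\mathbf v}$ (this produces the factor $\diam U$), then in $\mathbf v\in S_xM$ (producing $\int_U u\,\dmu_f$), then in $x\in V$ (producing $\mu_f(V)$) yields the inequality with $C_{Seg}=C_{Seg}(m,F,K,D)$, exactly as in the paper's proof. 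Your tracking of the $F(2D)$-dependence is the correct and only new ingredient over the classical case, but it enters through this bounded-ratio regime, not through an averaged absorption of an unbounded Jacobian ratio.
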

\begin{proof}
It follows from Corollary \ref{C102} and an argument
based on the original one of Cheeger-Colding's. For the sake of completeness we
write down the technical details, see also \cite{Huang17} for a version only for
Ricci shrinkers.

We may consider $\mathcal{F}_u(x,y)=\mathcal{F}_u^+(x,y)+\mathcal{F}_u^-(x,y)$
where 
\begin{align*}
\mathcal{F}_u^+(x,y)\ 
 :=\ \inf_{\{\gamma_{xy}\}}\int_{\frac{d(x,y)}{2}}^{d(x,y)}u(\gamma_{xy}(t))\
\text{d}t\quad \text{and}\quad
\mathcal{F}_u^-(x,y)\
:=\ \inf_{\{\gamma_{xy}\}}\int_0^{\frac{d(x,y)}{2}}u(\gamma_{xy}(t))\
\text{d}t.
\end{align*}
 Since $\mathcal{F}^+_u(x,y)=\mathcal{F}^-_u(y,x)$, by Fubini's
theorem, $$\int_{V\times V}\mathcal{F}_u^+(x,y)\ \dmu_f(x)\dmu_f(y)\ =\
\int_{V\times V}\mathcal{F}_u^-(x,y)\ \dmu_f(x)\dmu_f(y),$$ and so we only need
to do the estimate for $\mathcal{F}_u^+$. For any $x\in V$ and any
$\mathbf{v}\in S_xM$ fixed, define $d_{x,\mathbf{v}}:=\min\{t>0:
\exp_x(t\mathbf{v})\in \partial U\}$, also denote
$\gamma_{\mathbf{v}}(t)=\exp_x(t\mathbf{v})$. Then $\forall t\in
(0,d_{x,\mathbf{v}})$, by Corollary \ref{C102},
\begin{align*}
\mathcal{F}_u^+(\gamma_{\mathbf{v}}(t\slash
2),\gamma_{\mathbf{v}}(t))\ \dmu_f(\gamma_{\mathbf{v}}(t))\ 
&\le\ \left(\int_{\frac{t}{2}}^tu(\gamma_{\mathbf{v}}(s))\ \text{d}s\right)\
\area_f(\mathbf{v},t)\ \text{d}t\\
&\le\  C\left(\int_{\frac{t}{2}}^tu(\gamma_{\mathbf{v}}(s))\
\area_f(\mathbf{v},s)\text{d}s\right)\ \text{d}t.
\end{align*}
By the assumption on $V\subset U$, for almost every $y\in X$, there exists some
$\mathbf{v}\in S_xM$ such that $\gamma_{\mathbf{v}}(d(x,y))=y$, we have
\begin{align*}
\int_{V}\mathcal{F}_u^+(x,y)\ \dmu_f(y)\ &\le\
\int_{S_xM}\int_0^{d_{x,\mathbf{v}}}
\mathcal{F}_u^+(\gamma_{\mathbf{v}}(t\slash
2),\gamma_{\mathbf{v}}(t))\ \area_f(\mathbf{v},t)\ \text{d}t\text{d}\mathbf{v}\\
&\le C\diam U
\int_{S_xM}\int_0^{d_{x,\mathbf{v}}}u(\gamma_{\mathbf{v}}(s))\
\area_f(\mathbf{v},s)\ \text{d}s\text{d}\mathbf{v}\\
&\le C\diam U \int_{U}u\ \dmu_f.
\end{align*}
Finally, integrating the above inequality for $x\in X$, we get
\begin{align*}
\int_{V}\int_{V}\mathcal{F}_u^+(x,y)\ \dmu_f(y)\dmu_f(x)\ &\le\
C\mu_f(V)\diam U \int_U u\ \dmu_f.
\end{align*}
\end{proof}

With the help of the volume doubling property and the segment inequality, the
following local $L^2$-Poincar\'e inequality holds, see \cite{ChCo00} for a proof.

\begin{prop}[Local $L^2$-Poincar\'e inequality]\label{T:poincare}
Let $(M,p,g,f)\in \mathcal N_m(F,K)$. For any $D>0$, there
exists a constant $C_P=C_P(m,F,K,D)$ such that for any $B(q,r) \subset
B(p,D)$,
\begin{align*} 
\aint_{B(q,r)}\left|u-\aint_{B(q,r)}u\,d\mu_f\right|^2\,\dmu_f\ \le\
C_Pr^2\aint_{B(q,r)}|\na u|^2\,d\mu_f
\end{align*}
for any $u \in C^1(B(q,r))$.
\end{prop}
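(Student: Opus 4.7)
The plan is to execute the standard Cheeger--Colding template: combine the segment inequality (Theorem~\ref{T104}) with the volume doubling property \eqref{eqn: volume_doubling}, and then pass from the resulting dilated Poincar\'e inequality to one on the same ball via Jerison's chaining argument. I first reduce the left-hand side to a double integral by Jensen's inequality applied to the inner average:
\[
\aint_{B(q,r)} \left| u - \aint_{B(q,r)} u\, d\mu_f \right|^2 d\mu_f\ \le\ \aint_{B(q,r)} \aint_{B(q,r)} |u(x) - u(y)|^2 \, d\mu_f(x)\, d\mu_f(y).
\]
For any $x,y \in B(q,r)$ and any minimal geodesic $\gamma_{xy}$ joining them, the fundamental theorem of calculus combined with Cauchy--Schwarz gives $|u(x) - u(y)|^2 \le d(x,y) \int_0^{d(x,y)} |\nabla u|^2(\gamma_{xy}(t))\, dt$. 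Taking the infimum over $\gamma_{xy}$ and using $d(x,y) \le 2r$ produces the pointwise bound $|u(x) - u(y)|^2 \le 2r\, \mathcal{F}_{|\nabla u|^2}(x,y)$.

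Next I would apply the segment inequality (Theorem~\ref{T104}) with $V = B(q,r)$ and $U = B(q,2r)$, yielding
\[
\int_{B(q,r)\times B(q,r)} \mathcal{F}_{|\nabla u|^2}(x,y)\, d\mu_f(x)\, d\mu_f(y)\ \le\ 2r\, C_{Seg}\, \mu_f(B(q,r)) \int_{B(q,2r)} |\nabla u|^2\, d\mu_f.
\]
Dividing through by $\mu_f(B(q,r))^2$ and absorbing the ratio $\mu_f(B(q,2r))/\mu_f(B(q,r))$ into the constant via \eqref{eqn: volume_doubling} yields the weak Poincar\'e inequality
\[
\aint_{B(q,r)} \left|u - \aint_{B(q,r)} u\, d\mu_f\right|^2 d\mu_f\ \le\ C r^2 \aint_{B(q,2r)} |\nabla u|^2\, d\mu_f,
\]
with $C=C(m,F,K,D)$. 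To upgrade to the stated same-ball form I would invoke Jerison's classical chaining argument, which works purely from the volume doubling property and so applies without modification in the present weighted setting.

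The main technical obstacle is the application of Theorem~\ref{T104} with $U = B(q,2r)$, since a metric ball of radius $2r$ is in general \emph{not} geodesically convex as the statement of Theorem~\ref{T104} nominally requires. A careful reading of the proof of Theorem~\ref{T104} shows that what is genuinely used is the weaker property that every minimal geodesic from a point $x\in V$ to a point $y\in V$ stays inside $U$: the Bishop--Gromov-type comparison from Corollary~\ref{C102} is integrated along radial geodesics from each $x\in V$, and $V\times V$ is parametrized accordingly. Since any two points of $B(q,r)$ are joined by a minimal geodesic of length at most $2r$ contained in $B(q,2r)$, this weaker hypothesis holds, and the proof of Theorem~\ref{T104} goes through verbatim, with $C_{Seg}$ still depending only on $m,F,K,D$. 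Combined with the dependence of the doubling constant in \eqref{eqn: volume_doubling} on the same parameters, this yields the final constant $C_P$ in the claimed form.
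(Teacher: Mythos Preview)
Your proposal is correct and follows exactly the route the paper indicates: the paper does not give a detailed proof but simply refers to \cite{ChCo00}, stating that the inequality follows from the volume doubling property and the segment inequality, which is precisely the Cheeger--Colding template (segment inequality $\Rightarrow$ weak Poincar\'e, then Jerison-type self-improvement) that you carry out. Your observation about the convexity hypothesis in Theorem~\ref{T104} being relaxable to ``minimal geodesics between points of $V$ stay in $U$'' is correct and is the standard way this argument is run.
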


\begin{remark}
Throughout this paper we will let $\aint$ denote the average over a set whose
total mass is weighted against the measure in the integral, that is to say, for
any integrable function $u$ on $B(q,r)$, 
\begin{align*}
\aint_{B(q,r)}u\ \dmu_f\ :=\ \frac{1}{\mu_f(B(q,r))}\int_{B(q,r)}u\ \dmu_f.
\end{align*}
\end{remark}

Moreover, the local volume doubling with the local $L^2$-Poincar\'e inequality
will imply the following local Sobolev inequality, see \cite{LSC92}.

\begin{prop}[Local $L^2$-Sobolev inequality]\label{T:Sobolev}
Let $(M,p,g,f)\in \mathcal N_m(F,K)$. For any $D>0$, there
exists a constant $C_{Sob}=C_{Sob}(m,F,K,D)$ such that for any $B(q,r)
\subset B(p,D)$,
\begin{align*} 
\lc\int_{B(q,r)}u^{\frac{2m}{m-2}}\,\dmu_f \rc^{\frac{m-2}{m}}\ \le\ 
\frac{C_{Sob}r^2}{\mu_f(B(q,r))^{\frac{2}{m}}}\int_{B(q,r)}|\na
u|^2+r^{-2}u^2\,d\mu_f
\end{align*}
for any $u \in C_c^1(B(q,r))$.
\end{prop}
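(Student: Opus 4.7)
The plan is to deduce the local $L^2$-Sobolev inequality from the two analytic ingredients already at hand in this section: the local volume doubling property \eqref{eqn: volume_doubling} and the local $L^2$-Poincar\'e inequality in Proposition~\ref{T:poincare}. The implication (doubling $+$ Poincar\'e) $\Rightarrow$ (Sobolev) for $m>2$ is an abstract fact on metric measure spaces, worked out in full generality by Saloff-Coste in~\cite{LSC92}; my task is then to verify that the inputs hold uniformly on the family of balls $\{B(q,r): B(q,2r)\subset B(p,D)\}$, with all constants depending only on $m$, $F$, $K$, $D$.

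The argument I would run proceeds in three stages. First, iterating \eqref{eqn: volume_doubling} at dyadic subscales yields a doubling exponent essentially equal to the dimension $m$, up to lower-order corrections coming from the factor $F(2D)$ in the exponent. Second, combining Proposition~\ref{T:poincare} with doubling via a Fubini argument over balls of radius $s$ produces a pseudo-Poincar\'e inequality
\[
\int_{B(q,r)}|u-u_s|^2\,\dmu_f\ \le\ Cs^2\int_{B(q,r)}|\na u|^2\,\dmu_f,\qquad u_s(x):=\aint_{B(x,s)}u\,\dmu_f,
\]
valid for $u\in C_c^1(B(q,r))$ and $0<s\le r$. Third, combining this with the trivial bound $\|u_s\|_\infty\le \mu_f(B(\cdot,s))^{-1/2}\|u\|_2$ (from Cauchy--Schwarz) and optimizing over $s$ yields a localized Nash inequality
\[
\|u\|_2^{2+4/m}\ \le\ \frac{Cr^2}{\mu_f(B(q,r))^{2/m}} \lc \int_{B(q,r)} |\na u|^2 + r^{-2}u^2\,\dmu_f\rc \|u\|_1^{4/m};
\]
for $m>2$ the classical rearrangement/duality passage from Nash to Sobolev then produces the stated estimate.

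The main obstacle is bookkeeping: both $C_2$ from \eqref{eqn: volume_doubling} and $C_P$ from Proposition~\ref{T:poincare} carry exponential dependence on $D$ through $F(2D)$, and I would need to check that Saloff-Coste's iteration over $\sim \log(r/s)$ dyadic scales does not multiply these factors into a non-uniform constant. This is precisely the quantitative form that Saloff-Coste's framework is designed to preserve for balls contained in a fixed larger ball, so the final Sobolev constant $C_{Sob}$ depends only on $m$, $F$, $K$, $D$ as claimed. No argument specific to the smoothness of the weight $e^{-f}$ is needed, since the entire scheme uses only the metric measure structure $(M,g,\mu_f)$ together with the two previously established estimates.
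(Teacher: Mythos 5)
Your proposal is correct and follows essentially the same route as the paper: the paper gives no detailed argument, simply deducing the local Sobolev inequality from the volume doubling property \eqref{eqn: volume_doubling} and the local $L^2$-Poincar\'e inequality of Proposition~\ref{T:poincare} by citing Saloff-Coste~\cite{LSC92}, which is exactly the framework (including the Nash-inequality intermediate step) you sketch. Your additional bookkeeping of the $F(2D)$-dependence of the constants is a reasonable elaboration of what the paper leaves implicit, not a departure from its approach.
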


\subsection{Weak-compactification of the moduli spaces}
To begin this section, we first present the following weak-compactness theorem
of $\mathcal N_m(F,K)$.

\begin{thm}[\textbf{Weak-compactness of $\mathcal N_m(F,K)$}] \label{T:weak}
Let $\{(M_i^m,p_i, g_i, f_i)\}$ be a sequence in $\mathcal N_m(F,K)$, and let
$d_i$ denote the length structure induced by $g_i$. By passing to a subsequence
if necessary,  we have
\begin{align}
   (M_i^n, p_i,d_i, f_i)  \longright{pointed-Gromov-Hausdorff}
   \left(X,p_{\infty},d_{\infty},f_{\infty}\right),
\end{align}
where $(X,d_{\infty})$ is a length space, $f_{\infty}$ is a Lipschitz function
on $X$.
\end{thm}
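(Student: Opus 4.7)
The plan is to separate the argument into two pieces: first the pointed-Gromov-Hausdorff precompactness of the underlying length spaces via a Bishop-Gromov / Gromov-type packing argument, and second the uniform convergence of the potentials via an Arzel\`a-Ascoli argument adapted to converging base spaces.

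First I would show that for every fixed $D>0$, the balls $B_{g_i}(p_i,D)$ are uniformly totally bounded. Since each $(M_i,p_i,g_i,f_i)\in\mathcal N_m(F,K)$, condition (b) in Definition~\ref{defn:moduli} gives $|f_i|(x)\le F^2(D)$ on $B_{g_i}(p_i,D)$, so the weighted measure $\mu_{f_i}$ is comparable to the Riemannian volume on this ball with constants depending only on $F(D)$. The volume comparison Theorem~\ref{T101}, applied with the base point shifted inside $B_{g_i}(p_i,D)$ as in \eqref{eqn: volume_doubling}, produces a doubling constant $C_2(m,F,K,D)$ independent of $i$ for $\mu_{f_i}$, and therefore (up to a uniform multiplicative constant) also for the Riemannian volume on balls inside $B_{g_i}(p_i,D)$. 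Consequently, for any $\epsilon>0$ the maximal number of disjoint $\epsilon$-balls inside $B_{g_i}(p_i,D)$ is bounded by a constant $N(\epsilon,D,m,F,K)$ independent of $i$, giving a uniform $\epsilon$-net.

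Next, Gromov's pointed precompactness theorem, combined with a standard diagonal argument over an exhaustion $D_k\to\infty$ and $\epsilon_k\to 0$, produces a subsequence such that $(M_i,p_i,d_i)\to (X,p_\infty,d_\infty)$ in the pointed-Gromov-Hausdorff topology, with $(X,d_\infty)$ a complete length space. Recording the chosen $\epsilon_k$-approximations $\Phi_{i,k}\colon B_{g_i}(p_i,D_k)\to X$ will be used in the second step.

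For the potentials, the gradient estimate in (b) of Definition~\ref{defn:moduli} yields $|\nabla f_i|\le F(D)$ on $B_{g_i}(p_i,D)$, so each $f_i$ is $F(D)$-Lipschitz there, and pointwise bounded by $F^2(D)$. Hence $\{f_i\circ\Phi_{i,k}^{-1}\}$ is a uniformly bounded, uniformly equicontinuous family of functions on approximating $\epsilon_k$-nets of $B_X(p_\infty,D_k)$. A standard diagonal Arzel\`a-Ascoli argument across the $\epsilon_k$-nets then gives, after passing to a further subsequence, a continuous function $f_\infty\colon X\to\mathbb{R}$ such that $f_i$ converges to $f_\infty$ uniformly on balls of fixed radius around the base points (in the sense that $|f_i(x_i)-f_\infty(x_\infty)|\to 0$ whenever $x_i\to x_\infty$ under the approximations). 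The uniform Lipschitz bounds pass to the limit, so $f_\infty$ is $F(D)$-Lipschitz on $B_X(p_\infty,D)$ for every $D$, hence locally Lipschitz on $X$.

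The main obstacle is purely bookkeeping in the second step: one must make sure the same subsequence works simultaneously for the metric convergence on every scale $D_k$ and for the potential convergence, and that the limit function $f_\infty$ is well-defined (independent of which converging sequence $x_i\to x_\infty$ one picks). Both are handled by a Cantor diagonal extraction, using that the $\epsilon_k$-nets can be chosen nested as $k$ grows. No new analytic input beyond Theorem~\ref{T101} and the defining gradient control on $f_i$ is required.
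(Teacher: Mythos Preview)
Your proposal is correct and follows essentially the same approach as the paper: volume doubling from Theorem~\ref{T101} feeds into Gromov's packing/precompactness argument for the metric convergence, and the uniform gradient bound $|\nabla f_i|\le F(D)$ on $B(p_i,D)$ gives the Arzel\`a--Ascoli input for the convergence of the potentials. The only (harmless) extra step you take is comparing $\mu_{f_i}$ to the Riemannian volume via the bound on $|f_i|$; this is not needed, since the doubling property of $\mu_{f_i}$ alone already bounds the number of disjoint $\epsilon$-balls.
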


\begin{proof}
For any $D>0$, we have a uniform volume doubling constant on $B(p_i,D)\subset
M_i$ by (\ref{eqn: volume_doubling}). Then it follows from the standard ball
packing argument of Gromov, see \cite[Proposition $5.2$]{Gromov} that
\begin{align*}
   (M_i^m, p_i,d_i)  \longright{pointed-Gromov-Hausdorff}
   \left(X,p_{\infty},d_{\infty} \right).
\end{align*}
In addition, since $|\na f_i| \le F(D)$ on $B(p_i,D)\subset M_i$, it is clear
from the Arzela-Ascoli theorem that $f_i$ converge to a locally Lipschitz limit
function $f_{\infty}$. Moreover, $\|f_{\infty}\|_{\text{Lip}} \le F(D)$ on
$B(p_{\infty},D)\subset X$.
\end{proof}

Besides the pure metric structure, we also have a limit measure on the
pointed-Gromov-Hausdorff limit, and we define the
pointed-measured-Gromov-Hausdorff convergence as following:
\begin{defn}\label{def: pmGH}
Let $\{(M_i,p_i,d_i,\mu_i)\}$ be a sequence of metric measure spaces, we say
this sequence pointed-measured-Gromov-Hausdorff converges to a metric measure
space $(X,p,d,\mu)$ if there exist a sequence of radii $D_k\uparrow \infty$, and
pointed-Gromov-Hausdorff approximations $\Phi_{ik}:B_{d_i}(p_i,D_k)\to
B_{d}(p,D)\subset X$, such that $(\Phi_{ik})_{\ast}\mu_i\to \mu$ in
$C_0(B_d(p,D_k))^{\ast}$, the dual space of all continuous functions on
$B_d(p,D_k)$, vanishing on $\partial B_d(p,D_k)$. 
\end{defn} 
Now for each $(M,p,g,f)\in \mathcal{N}_m(F,K)$ we define the renormalized
measure $\nu_f:=\mu_f(B(p,1))^{-1}\mu_f$, and have the
following proposition in analogy to the case of manifolds with a uniform Ricci
curvature lower bound~\cite{ChCo97}:
\begin{prop}\label{prop: renormalized_measure}
Consider a sequence $\{(M_i,p_i,g_i,f_i)\}\subset \mathcal{N}_m(F,K)$
that converges to a pointed metric space $(X,p_{\infty},d_{\infty},f_{\infty})$
in the pointed-Gromov-Hausdorff topology. Then there is a subsequence, denoted
by $\{(M_{i_j},p_{i_j},g_{i_j},f_{i_j})\}$, and a Randon measure
$\nu_{\infty}$ such that $\{(M_{i_j},p_{i_j},g_{i_j},\nu_{f_{i_j}})\}$
converges to $(X,p_{\infty},d_{\infty},\nu_{\infty})\}$ in the
pointed-measured-Gromov-Hausdorff topology. Moreover, $\nu_{\infty}$ satisfies
the following conditions:
\begin{enumerate}
  \item[(1)] $\forall x\in X$ and $\forall r>0$, suppose $M_{i_j}\ni
  x_{i_j}\longright{GH}x$, then
  \begin{align*}
  \nu_{\infty}(B(x,r))\ =\ \lim_{j\to \infty}\nu_{f_{i_j}}(B(x_{i_j},r));
  \end{align*}
  \item[(2)] $\forall x\in X$ and $\forall r_2\ge r_1>0$,
  \begin{align*}
       \frac{\nu_{\infty}(B(x,r_2))}{\nu_{\infty}(B(x,r_1))}\ \leq\
       e^{C_F(x,r_2)}
       \frac{Vol_{K}^{m}(r_2)}{Vol_{K}^{m}(r_1)},
     \end{align*}
  where
  $C_F(x,r_2):=F(r_2+d_{\infty}(x,p_{\infty}))(r_2+d_{\infty}(x,p_{\infty}))$.
\end{enumerate}
Furthermore, any Randon measure on $X$ satisfying the above two conditions
agrees with $\nu_{\infty}$.
\end{prop}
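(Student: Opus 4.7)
The plan is to construct $\nu_\infty$ by a diagonal weak-$*$ compactness argument, verify properties (1) and (2) by passing to the limit in the volume comparison of Theorem \ref{T101}, and deduce uniqueness from (1) together with the separability of $X$.

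First I would fix an exhaustion $D_k\uparrow\infty$ together with pointed-Gromov-Hausdorff approximations $\Phi_{ik}\colon B_{d_i}(p_i,D_k)\to B_{d_\infty}(p_\infty,D_k)$. Theorem \ref{T101} applied to $B(p_i,D_k)$ yields the uniform bound
$$\nu_{f_i}(B(p_i,D_k))\ \le\ e^{F(D_k)D_k}\frac{Vol_K^m(D_k)}{Vol_K^m(1)},$$
so the pushforwards $(\Phi_{ik})_*\nu_{f_i}$ are Radon measures on $B_{d_\infty}(p_\infty,D_k)$ of uniformly bounded total mass. Banach--Alaoglu applied to $C_0(B_{d_\infty}(p_\infty,D_k))^*$, combined with a diagonal extraction over $k$, produces one subsequence $\{i_j\}$ and a Radon measure $\nu_\infty$ on $X$ realizing the pointed-measured-Gromov-Hausdorff convergence of Definition \ref{def: pmGH}.

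Next I would verify (1). For $x\in X$, $r>0$ and $\epsilon\in(0,r)$, an application of Theorem \ref{T101} to annuli centered at $x_{i_j}$, combined with a Bishop--Gromov-type comparison relating $\mu_{f_{i_j}}(B(x_{i_j},r))$ to $\mu_{f_{i_j}}(B(p_{i_j},1))$ (whose value is the normalization factor), produces a uniform bound of the form
$$\nu_{f_{i_j}}\bigl(A(x_{i_j},r-\epsilon,r+\epsilon)\bigr)\ \le\ C\bigl(m,F,K,d_\infty(x,p_\infty)+r\bigr)\bigl[Vol_K^m(r+\epsilon)-Vol_K^m(r-\epsilon)\bigr],$$
whose right-hand side is independent of $j$ and tends to $0$ as $\epsilon\downarrow 0$. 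Weak-$*$ convergence then forces $\nu_\infty(\partial B(x,r))=0$, so $\overline{B(x,r)}$ is a continuity set for $\nu_\infty$ and the portmanteau theorem yields the claimed limit. Property (2) follows by applying the second inequality of Theorem \ref{T101} to $B(x_{i_j},r_1)\subset B(x_{i_j},r_2)$, absorbing the distance-dependent factor into $C_F(x,r_2)$, and passing to the limit through (1).

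For uniqueness, since $X$ is separable, every open set is a countable union of open balls; together with outer regularity, this shows that a Radon measure on $X$ is determined by its values on all open balls. Any $\tilde\nu_\infty$ satisfying (1) agrees with $\nu_\infty$ on every open ball through the common limiting formula, hence $\tilde\nu_\infty=\nu_\infty$. The main obstacle is verifying (1) at \emph{every} radius $r>0$ rather than merely almost every; this is precisely what the uniform annular estimate above resolves. The critical technical input is that the constant in Theorem \ref{T101} depends on $f_i$ only through $F$ evaluated at the outer radius, so the growing gradient bound on the potential does not obstruct passage to the limit.
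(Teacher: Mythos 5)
Your construction of $\nu_\infty$ by a diagonal weak-$*$ compactness argument, and your verification of (1) (via the uniform annulus bound obtained by applying Theorem \ref{T101} twice, once to compare $A(x_{i_j},r-\epsilon,r+\epsilon)$ with $B(x_{i_j},r-\epsilon)$ and once to compare that ball with the normalizing ball $B(p_{i_j},1)$) and of (2) (by passing to the limit in the second inequality of Theorem \ref{T101}) are sound, and they follow essentially the same route as the paper, which records exactly these two-sided ball-ratio estimates and then invokes the constructions of \cite[Theorems 1.6 and 1.10]{ChCo97}.

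The genuine gap is in your uniqueness step. The principle you invoke --- that on a separable metric space a Radon measure is determined by its values on open balls because every open set is a countable union of open balls --- is false in general: the balls in such a union overlap, the family of balls is not closed under intersection, so neither inclusion--exclusion nor a Dynkin ($\pi$-$\lambda$) argument applies, and Davies constructed a compact metric space carrying two distinct Borel measures that agree on every ball. Note also that you use only condition (1), whereas the proposition asserts uniqueness among measures satisfying \emph{both} (1) and (2), and (2) is precisely what rescues the argument: any Radon measure $\tilde\nu_\infty$ satisfying (2) is locally doubling, so the Vitali covering theorem applies to it; given an open set $U$ one chooses a countable disjoint family of balls $B_i\subset U$ covering $U$ up to a $\tilde\nu_\infty$-null set, whence $\tilde\nu_\infty(U)=\sum_i\tilde\nu_\infty(B_i)=\sum_i\nu_\infty(B_i)\le\nu_\infty(U)$, and the symmetric argument (using that $\nu_\infty$ is itself doubling by (2)) gives the reverse inequality; outer regularity then yields $\tilde\nu_\infty=\nu_\infty$ on all Borel sets. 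This doubling-plus-covering argument is in effect the content of \cite[Theorem 1.10]{ChCo97} that the paper appeals to, and without it your final paragraph does not establish uniqueness.
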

\noindent In the case of Gromov-Hausdorff convergence of Riemannian manifolds
with a uniform Ricci curvature lower bound, this was shown in~\cite[Section
1]{ChCo97}, where the natural measure was renormalized by the volume of a unit
ball (centered at some base point chosen on the manifold), and the renormalized
measured was shown to converge, uniformly on compact subsets, to a limit Randon
measure on the pointed-Gromov-Hausdorff limit. By Theorem~\ref{T101}, we could
easily see the following estimates: $\forall x,y\in M$, $\forall r\ge s>0$ such that
$B(x,r)\subset B(p,D)$ and $B(y,s)\subset B(p,D)$, 
\begin{align*}
\frac{\mu_f(B(x,r))}{\mu_f(B(y,s))}\ &\le\
e^{2DF(D)}\frac{Vol^m_K(r+d(x,y))}{Vol_K^m(s)},\\
\text{and}\quad \frac{\mu_f(B(x,r))}{\mu_f(B(y,s))}\ &\ge\ \begin{cases}
e^{-2DF(D)}\frac{Vol_K^m(r)}{Vol_K^m(s+d(x,y))}\quad &\text{when}\ r\le
s+d(x,y),\\
1\quad &\text{when}\ r\ge s+d(x,y).
\end{cases}
\end{align*}
Obviously, these estimates have~\cite[Estimates (1.2)-(1.4)]{ChCo97} as their
counterparts for manifolds with a uniform Ricci curvature lower bound,
and consequently, the same constructions as~\cite[Theorem 1.6, Theorem
1.10]{ChCo97} work in our situation to deduce the last proposition.

If we further focus on the sub-collection of all $m$-dimensional non-compact
Ricci shrinkers, the natural measure $\mu_f$, as pointed out
by~\cite[Corollary 1.1]{CZ10}, has finite total mass. Due to the growth
property of $f$ (Lemma~\ref{L100}), $\mu_f$ has an essentially
canonical choice of base point --- one of the minima of $f$. We will therefore
consider a Ricci shrinker $(M,p,g,f)$ together with the canonical probability
measure $\rho:=\mu_f(M)^{-1}\mu_f$. 
In fact we have the following:
\begin{prop}
Let $\{(M_i,p_i,g_i,f_i)\}$ be a sequence of $m$-dimensional non-compact Ricci
shrinkers that pointed Gromov-Hausdorff converges to a metric space
$(X,p_{\infty},d_{\infty},f_{\infty})$, then there is a subsequence and a unique
Randon measure $\rho_{\infty}$ satisfying conditions (1) and (2) in
Proposition~\ref{prop: renormalized_measure}. Moreover, $\rho_{\infty}$ is a
probability measure.
\end{prop}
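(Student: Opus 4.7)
The plan is to reduce the statement to Proposition~\ref{prop: renormalized_measure}, which already produces a limit Radon measure $\nu_\infty$ for the renormalized measures $\nu_{f_i} := \mu_{f_i}(B(p_i,1))^{-1}\mu_{f_i}$. Since any $m$-dimensional Ricci shrinker lies in $\mathcal{N}_m(F_{RS},1/2)$ with $F_{RS}(t)=\frac{1}{2}(t+\sqrt{2m})$, that proposition applies, and after passing to a subsequence $\{i_j\}$ we have $\nu_{f_{i_j}}\to \nu_\infty$ in the pointed-measured-Gromov-Hausdorff topology. The key new ingredient is to compare the normalization $\mu_{f_i}(M_i)$ of $\rho_i$ with the normalization $\mu_{f_i}(B(p_i,1))$ of $\nu_{f_i}$, and then to account for the mass at infinity in the limit.

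Write $\rho_i=\alpha_i\,\nu_{f_i}$ where $\alpha_i:=\mu_{f_i}(B(p_i,1))/\mu_{f_i}(M_i)\in(0,1]$. The central computational step is to establish a dimensional lower bound $\alpha_i\ge c_0(m)>0$. For this, the Gaussian pointwise decay $e^{-f(x)}\le e^{-\frac{1}{4}(d(p,x)-5m)_+^2}$ from Lemma~\ref{L100}, combined with the polynomial growth $|B(p,D)|\le C_1(m)D^m|B(p,D_0(m))|$ from Lemma~\ref{L100a}, controls the mass on annuli by
\[
\mu_{f_i}(M_i)\ \le\ |B(p_i,D_0)|\,\Bigl(1+\sum_{k=0}^\infty C_1(m)(D_0+k+1)^m\,e^{-\frac{1}{4}(k-5m)_+^2}\Bigr)\ \le\ C(m)|B(p_i,D_0)|.
\]
On the other hand the upper bound $f\le\frac{1}{4}(D_0+\sqrt{2m})^2$ on $B(p_i,D_0)$ gives $|B(p_i,D_0)|\le e^{(D_0+\sqrt{2m})^2/4}\mu_{f_i}(B(p_i,D_0))$, and the Bakry--\'Emery volume doubling of Theorem~\ref{T101} applied between radii $1$ and $D_0$ gives $\mu_{f_i}(B(p_i,D_0))\le C(m)\mu_{f_i}(B(p_i,1))$, with a dimensional constant because $F_{RS}(D_0)$ depends only on $m$. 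Chaining these inequalities yields $\mu_{f_i}(M_i)\le C(m)\mu_{f_i}(B(p_i,1))$, hence $\alpha_i\ge c_0(m)$.

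With this uniform two-sided bound, extract a further subsequence along which $\alpha_{i_j}\to\alpha_\infty\in[c_0(m),1]$ and define $\rho_\infty:=\alpha_\infty\,\nu_\infty$. Then $\rho_\infty$ inherits the two characterizing conditions: condition~(1) follows since $\rho_{i_j}(B(x_{i_j},r))=\alpha_{i_j}\nu_{f_{i_j}}(B(x_{i_j},r))\to\alpha_\infty\nu_\infty(B(x,r))=\rho_\infty(B(x,r))$, while condition~(2) is automatic because the scaling factor $\alpha_\infty$ cancels in the ratio. Uniqueness of $\rho_\infty$ within the class of Radon measures satisfying (1) and (2) then follows verbatim from the last assertion of Proposition~\ref{prop: renormalized_measure}.

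Finally, to show $\rho_\infty$ is a probability measure, I would use the same Gaussian/volume-growth estimate, restricted to $M_i\setminus B(p_i,D)$, to obtain a uniform tail bound $\rho_i(M_i\setminus B(p_i,D))\le\epsilon(D)$ with $\epsilon(D)\to 0$. Condition~(1) then gives $\rho_\infty(B(p_\infty,D))=\lim_j\rho_{i_j}(B(p_{i_j},D))\in[1-\epsilon(D),1]$, so letting $D\to\infty$ in the exhaustion $X=\bigcup_D B(p_\infty,D)$ forces $\rho_\infty(X)=1$. I expect the main obstacle to be the dimensional lower bound on $\alpha_i$, since one must carefully play the Gaussian decay of the weight $e^{-f}$ against the a priori rapid $g_i$-volume growth on annuli and against the exponential factor in Theorem~\ref{T101}; the point is that all exponentials involved depend only on dimension, because $D_0(m)$ and $F_{RS}$ are universal on the class of Ricci shrinkers.
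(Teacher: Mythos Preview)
Your proposal is correct and follows essentially the same approach as the paper: both arguments hinge on the Gaussian decay of $e^{-f}$ from Lemma~\ref{L100} played against the polynomial volume growth from Lemma~\ref{L100a} to produce a uniform, purely dimensional tail bound $\rho_i(M_i\setminus B(p_i,D))\le \epsilon(D)\to 0$. The only presentational difference is that you explicitly factor $\rho_i=\alpha_i\nu_{f_i}$, extract a limit $\alpha_\infty$, and set $\rho_\infty=\alpha_\infty\nu_\infty$, whereas the paper works directly with $\rho_i$ and simply notes that existence and uniqueness come from the construction of Proposition~\ref{prop: renormalized_measure}, leaving only the probability statement to verify via the tail bound; your intermediate step of bounding $\alpha_i$ away from zero is mathematically equivalent to (and in fact contains) the paper's tail estimate.
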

\begin{proof}
It only remains to show that the limit measure $\rho_{\infty}$ is a probability
measure. To see this we turn to the estimates in Lemma~\ref{L100} and
Lemma~\ref{L100a}.
For each $k\in \mathbb{N}$, define $D_k:=2^kD_0(m)$, and for the sake of
simplicity let $B_{i,k}$ denote $B(p_i,D_k)\subset M_i$. Then we could
estimate for each $i$ and each $k\ge
K_m:=\lceil\frac{1}{2}\log_2(18m)\rceil$:
\begin{align}\label{eqn: tail}
\begin{split}
\mu_{f_i}(M_i\backslash B_{i,k})\ =\
&\sum_{j=k}^{\infty}\int_{B_{i,j+1}\backslash B_{i,j}}e^{-f_i}\ \dvol_{g_i}\\
\le\
&\sum_{j=k}^{\infty}|B_{i,0}|D_{j+1}^me^{-\frac{1}{4}(D_{j}-5m)_+^2}\\
\le\ &|B_{i,0}|(2D_0(m))^m\int_k^{\infty}e^{-u}\ \text{d}u\\
=\ &e^{-k}(2D_0(m))^m|B_{i,0}|.
\end{split}
\end{align}
This estimate, combined with the inequality
\begin{align*}
\mu_{f_i}(B_{i,0})\ \ge\
e^{-\frac{1}{4}(D_0(m)+\sqrt{2m})^2}|B_{i,0}|,
\end{align*}
implies
\begin{align*}
\frac{\mu_{f_i}(M_i\backslash B_{i,k})}{\mu_{f_i}(M_i)}\ \le\
e^{-k}(2D_0(m))^me^{\frac{1}{4}(D_0(m)+\sqrt{2m})^2}.
\end{align*} 
Therefore, for each $i$ and each $k\ge K_m$, 
\begin{align}
 1-e^{-k}(2D_0(m))^me^{\frac{1}{4}(D_0(m)+\sqrt{2m})^2}\ \le\
 \rho_i(B_{i,k})\ \le\ 1.
\end{align}

Taking the pointed-Gromov-Hausdorff convergence with the radii
$\{D_k\}_{k\ge K_m}$, it is easy to see that the limit measure $\rho_{\infty}$
has unit total mass, whence a probability measure. 
\end{proof}

If we only consider the class $\mathcal M_m(F,K;V_0)$, it follows from
\cite[Theorem $10.1$]{LLW17} that

\begin{thm} \label{thmin:a}
Consider a sequence $\{(M_i^m,p_i, g_i,f_i,)\}\subset \mathcal M_m(F,K;V_0)$
such that
\begin{align*}
   (M_i^m,p_i, d_i, f_i)  \longright{pointed-Gromov-Hausdorff}
   \left(M_{\infty},p_{\infty}, d_{\infty},f_{\infty}\right).
\end{align*} 
Then $M_{\infty}$ has a regular-singular decomposition $M_{\infty}=\mathcal{R}
\cup \mathcal{S}$ with the following properties.
 \begin{itemize}
 \item[(a).] The singular part $\mathcal{S}$ is a closed set of Minkowski
 codimension at least $4$.
 \item[(b).] The regular part $\mathcal{R}$ is an $m$-dimensional open manifold
 with a $C^{1,\alpha}$ metric $g_{\infty}$ and $f_{\infty}$ is a $C^{1,\alpha}$
 function on $\mathcal R$.
 \end{itemize}
The convergence can be improved to 
\begin{align}
    (M_i, p_i, g_i,f_i)  \longright{pointed-\hat{C}^{1,\alpha}-Cheeger-Gromov}
    \left(M_{\infty}, p_{\infty}, g_{\infty}, f_{\infty} \right),     
    \label{eqn:CA25_10}
\end{align}
and the metric structure induced by smooth curves in $(\mathcal{R},
g_{\infty})$ coincides with $d_{\infty}$.

Moreover, the limit renormalized measure $\nu_{\infty}$ on
$M_{\infty}$ is defined as following: $\nu_{\infty}$ vanishes on
$\mathcal{S}$, and on $\mathcal{R}$ it has density
$V_{\infty}^{-1}\dmu_{f_{\infty}}$, where
 \begin{align*}
 V_{\infty}\ :=\ \lim_{i\to \infty}\mu_{f_i}(B(p_i,1)).
 \end{align*}
\end{thm}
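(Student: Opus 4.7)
The plan is to adapt the Cheeger-Colding-Naber regularity theory for non-collapsed limits of manifolds with bounded Ricci curvature to the Bakry-\'Emery setting of $\mathcal{M}_m(F,K;V_0)$. The crucial additional input beyond Theorem~\ref{T:weak} is the two-sided bound $|Rc_f|\le K$ together with the pointwise control of $|\nabla f|^2$ from condition (e); combined with the non-collapsing (d), the volume comparison (Theorem~\ref{T101}), the segment inequality (Theorem~\ref{T104}), and the local $L^2$-Poincar\'e inequality (Proposition~\ref{T:poincare}), these constitute the analytic backbone of the argument.

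First, I would establish an $\epsilon$-regularity theorem: there exist $\epsilon_0,r_0>0$ (depending on $m$, $F$, $K$, $V_0$, and the distance to the base point) such that if $d_{GH}(B(x,r), B_{\mathbb{R}^m}(0,r))<\epsilon_0 r$ for some $r\le r_0$, then on $B(x,r/2)$ the metric $g$ and the potential $f$ admit uniformly $C^{1,\alpha}$ harmonic coordinate representations. The existence of such coordinates follows from a Cheeger-Colding almost-rigidity argument based on the segment inequality and the $L^2$-Poincar\'e inequality. In these coordinates the metric satisfies the schematic equation $\Delta g_{ij} = -2 Rc_{ij}+Q(g,\partial g)$; using the shrinker-like identity $Rc = \frac{1}{2}g - Hess_f$ provided by (e), this becomes a coupled nonlinear elliptic system in the pair $(g,f)$ whose right-hand side is controlled by $F$ and $K$, so Schauder theory delivers the $C^{1,\alpha}$ estimates on both $g$ and $f$.

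Second, I would define $\mathcal{R}$ to be the set of $x\in M_\infty$ at which every tangent cone is isometric to $\mathbb{R}^m$, and set $\mathcal{S}:=M_\infty\setminus\mathcal{R}$. The $\epsilon$-regularity theorem immediately gives that $\mathcal{R}$ is open and that the convergence on $\mathcal{R}$ is actually pointed-$\hat{C}^{1,\alpha}$-Cheeger-Gromov: on any compact $K\Subset \mathcal{R}$ one obtains a uniform positive lower bound for the harmonic radius, the corresponding charts on the $M_i$ converge in $C^{1,\alpha}$ to limit charts on $\mathcal{R}$, and hence the length structure of $(\mathcal{R},g_\infty)$ coincides with $d_\infty|_{\mathcal{R}}$. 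The description of the limit renormalized measure $\nu_\infty$ then follows from weak-$*$ convergence of $\nu_{f_i}$ against the locally uniform convergence $f_i\to f_\infty$ (Theorem~\ref{T:weak}), once one knows $\nu_\infty(\mathcal{S})=0$.

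Third---and this is where the main obstacle lies---the Minkowski codimension-$4$ bound on $\mathcal{S}$ requires Cheeger-Naber's quantitative stratification together with uniform small-scale $L^2$-curvature bounds $\int_{B(x,r)}|Rm|^2\,\dmu_f\le C\, r^{m-4}$. In the Bakry-\'Emery setting this is delicate: one decomposes $Rc = \frac{1}{2}g-Hess_f$ and controls the $Hess_f$ terms by integration by parts against a cutoff, absorbing them via the two-sided bound on $Rc_f$ and the gradient control $|\nabla f|^2\le F^2$ from (e); the remaining Weyl-type component is then handled by a local Chern-Gauss-Bonnet-type argument available precisely because $Rc_f$ is two-sided bounded. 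The difficulty is to make these integrations insensitive to the growing upper bound on $F$ along the sequence, which is where a careful localization (to balls of controlled radius in terms of $d(p,\cdot)$) together with the volume doubling (\ref{eqn: volume_doubling}) is essential. Once the $L^2$-curvature bounds are in hand, quantitative stratification proceeds almost verbatim and yields that $\mathcal{S}$ is closed with Minkowski codimension at least $4$, completing the proof.
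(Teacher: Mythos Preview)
The paper does not prove this theorem; it is simply quoted as \cite[Theorem~10.1]{LLW17}. So there is no ``paper's own proof'' to compare against, and your proposal should be judged on whether it is a credible sketch of the argument that \cite{LLW17} actually carries out.

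Your broad architecture---$\epsilon$-regularity plus harmonic coordinates for part~(b), Cheeger--Naber quantitative stratification for part~(a)---is the right template. But two points are off. First, condition~(e) does \emph{not} give ``the shrinker-like identity $Rc=\tfrac{1}{2}g-Hess_f$''; it only gives the two-sided bound $|Rc_f|\le K$ together with $|R|+|\nabla f|^2\le F^2$. What you actually have available is the tautology $Rc=Rc_f-Hess_f$, and the elliptic bootstrap must then be run as a genuine coupled system: $|\Delta f|=|\mathrm{tr}\,Rc_f-R|\le mK+F^2$ yields $f\in C^{1,\alpha}$ once the metric is $C^{\alpha}$ in harmonic coordinates, and feeding $Hess_f$ back into the harmonic-coordinate equation for $g$ upgrades $g$ to $C^{1,\alpha}$. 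The paper's introduction indicates that in \cite{LLW17} this regularity step is in fact done via a local \emph{conformal change} that converts the two-sided $Rc_f$ bound into an honest two-sided Ricci bound (see the references to \cite{ZLZ10}, \cite{HM14}, \cite[Lemma~3.7]{LLW17}), after which Anderson's classical $\epsilon$-regularity and Cheeger--Naber apply verbatim; this is a different route from the direct coupled bootstrap you describe.

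Second, your invocation of a ``local Chern--Gauss--Bonnet-type argument'' for the $L^2$-curvature bound is dimension-specific: Chern--Gauss--Bonnet only works in dimension four. The codimension-$4$ bound in general dimension $m$ requires the full Cheeger--Naber neck/slicing machinery to control the $L^2$-curvature on small balls, and this step does not reduce to a topological identity. As stated, your third paragraph does not close this gap in dimensions $m\ge 5$.
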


\begin{remark}\label{rmk: LLW_necessity}
In fact, combining the work of Wang-Zhu~\cite{WZ17} and
Zhang-Zhu~\cite{ZZ17}, the pointed-Gromov-Hausdorff convergence could
already be improved to the pointed-$\hat{C}^{\alpha}$-Cheeger-Gromov
convergence. However, without the endeavors made in~\cite{LLW17}, one cannot
directly improve the regularity to pointed-$\hat{C}^{1,\alpha}$-Cheeger-Gromov
convergence, let along the pointed-$\hat{C}^{\infty}$-Cheeger-Gromov convergence
for Ricci shrinkers in Theorem~\ref{thm: main2}.
\end{remark}

Note that the limit measure identities
\begin{align*}
\nu_{\infty}\ =\ \begin{cases}
V_{\infty}^{-1}\mu_{f_{\infty}}\quad &\text{on}\quad \mathcal{R},\\
0\quad &\text{on}\quad \mathcal{S},
\end{cases}
\end{align*}
amounts to say that in the sub-collection $\mathcal{M}_m(F,K;V_0)$,
pointed-Gromov-Hausdorff topology is equivalent to
pointed-measured-Hausdorff topology. Therefore we will only need to
discuss the pointed-Gromov-Hausdorff topology on $\mathcal{M}_m(F,K;V_0)$. 

If the sequence in consideration actually consists of complete Ricci shrinkers,
we could promote the convergence to pointed-$\hat{C}^{\infty}$-Cheeger-Gromov
convergence, by the usual elliptic bootstrapping argument (see \cite{HM11}
and~\cite{LLW17}).
Also, the limit measure could be shown to be a probability measure
$\rho_{\infty}$ such that with $U_{\infty}:=\lim_{i\to \infty}\mu_{f_i}(M_i)$,
we have 
\begin{align*}
\rho_{\infty}\ =\ \begin{cases}
U_{\infty}^{-1}\mu_{f_{\infty}}\quad &\text{on}\quad \mathcal{R},\\
0\quad &\text{on}\quad \mathcal{S}.
\end{cases}
\end{align*}

\subsection{Regular-singular decomposition of the Gromov-Hausdorff limits}
The regular-singular decomposition of the pointed-Gromov-Hausdorff limit in
Theorem~\ref{thmin:a} could be discussed in the more general setting for
manifolds in $\mathcal{N}_m(F,K)$.

The definition of the regular part in the pointed-Gromov-Hausdorff limit
$(X,p_{\infty},d_{\infty},f_{\infty})$ of a sequence in $\mathcal{N}_m(F,K)$ is
based on the concept of metric tangent cones, as done in the case of manifolds
with a uniform Ricci curvature lower bound, see~\cite[Section 0]{ChCo97}.

To see the existence of a metric tangent cone for any point $x\in X$, we fix any
sequence of scales $r_j\to 0$ (assuming $r_j\in (0,1)$), and assume that
$M_{i}\ni x_{i}\longright{GH}x\in X$. We could then
consider the sequence of pointed metric spaces $\{(X,x,\tilde{d}_j)\}$, with the
rescaled metrics $\tilde{d}_j:=r_j^{-1}d_{\infty}$. Clearly, we have
$(M_i,p_i,r_j^{-2}g_i)\in \mathcal{N}_m(\tilde{F}_j,r_jK)\subset
\mathcal{N}_m(F,K)$, where$\tilde{F}_j:=r_jF$. Now for any $D>0$ and any $j$
fixed, we have
\begin{align*}
B_{r_j^{-2}g_{i}}(x_{i},D)\ \longright{Gromov-Hausdorff}\
B_{\tilde{d}_j}(x,D).
\end{align*}
Therefore regarding $B_{r_j^{-2}g_{i}}(x_{i},D)\subset
(M_{i},p_{i},r_j^{-2}g_i)$, we have, by Proposition~\ref{prop:
renormalized_measure}, that there is a limit renormalized measure
$\tilde{\nu}_j$ such that $\forall B_{\tilde{d}_j}(x',2r)\subset
B_{\tilde{d}_j}(x,D)$,
\begin{align*}
 \tilde{\nu}_j(B_{\tilde{d}_j}(x',2r))\ \le\
C_j(x,r,D)\tilde{\nu}_j(B_{\tilde{d}_j}(x',r)),
\end{align*}
where the sequence 
\begin{align*}
C_j(x,r,D)\ :=\ e^{F(d_{\infty}(x,p_{\infty})+D)D}Vol_{r_j^2K\slash
2}^m(2r)\slash Vol_{r_j^2K\slash 2}^m(r)
\end{align*} 
is uniformly bounded since $r_j\to 0$. Therefore the sequence of pointed metric
spaces $\{(X,x,\tilde{d}_j)\}$, equipped with measures $\tilde{\nu}_j$, have a
uniform doubling constant within the fixed distance $D$ to the base point. This
implies that the maximal number $N_j(x,r,D)$ of disjoint $r$-balls fitting into
$B_{\tilde{d}_j}(x,D)\subset (X,\tilde{d}_j)$ is uniformly bounded in $j\in
\mathbb{N}$, and Gromov's compactness theorem~\cite[Proposition $5.2$]{Gromov}
guarantees the existence of a complete metric space to which a subsequence in
$\{(X,x,\tilde{d}_j)\}$ converges in the pointed-Gromov-Hausdorff topology. This
limit metric space defines a \emph{metric tangent cone} of $X$ at $x$.

Now for each $k=1,2,\cdots,m$, we define, following~\cite[Definition
0.1]{ChCo97}, the $k$-regular part of $X$:
\begin{align}
\mathcal{R}_k\ :=\ \{x\in X:\ \text{any metric tangent cone at}\ x\
\text{is isometric to the Euclidean}\ k\text{-space}\}.
\label{eqn:PI10_1}
\end{align}
\noindent We also call $\cup_{k=1}^m\mathcal{R}_k$ the regular part of $X$,
denoted by $\mathcal{R}$, and $\mathcal{S}:=X\backslash \mathcal{R}$.

To justify the notation, we have the following characterization of the regular
part in the non-collapsing case:
\begin{thm} \label{T:noncollapsing}
Let $\{(M_i^m, p_i,g_i, f_i)\}$ be a sequence of manifolds in
$\mathcal{N}_m(F,K;V_0)$ such that
\begin{align*}
   (M_i^m, p_i,d_i, f_i)  \longright{pointed-Gromov-Hausdorff}
   \left(M_{\infty},p_{\infty},  d_{\infty},f_{\infty}\right).
\end{align*} 
Then $y \in \mathcal R$ if and only if
there exists a tangent cone at $y$ which is isometric to
$(\mathbb{R}^m,g_{Euc})$.
\end{thm}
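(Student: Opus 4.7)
I would prove the two implications separately, with both directions hinging on combining the non-collapsing hypothesis with the uniform Bishop-Gromov comparison of Theorem \ref{T101}.

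For the ``only if'' direction, suppose $y\in\mathcal{R}_k$, so every metric tangent cone at $y$ is isometric to $\mathbb{R}^k$; the task is to force $k=m$. Take approximators $y_i\to y$ and set $D:=d_\infty(y,p_\infty)+1$. Since $B(p_i,1)\subset B(y_i,2D)$, the non-collapsing $\mu_{f_i}(B(p_i,1))\ge V_0$ combined with Theorem \ref{T101} gives a uniform lower bound $\mu_{f_i}(B(y_i,r))\ge c\,r^m$ and a matching upper bound $\mu_{f_i}(B(y_i,r))\le C\,r^m$ for all sufficiently small $r$, with constants depending on $m,F,K,D,V_0$ only. Rescaling at tangent cone scales $r_j\downarrow 0$ and using the renormalized measures of Proposition \ref{prop: renormalized_measure}, the limit measure on the tangent cone has ball masses comparable to $s^m$ at all small $s$. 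Since an $\mathbb{R}^k$ tangent cone would carry a canonical Hausdorff $k$-dimensional scaling, the two conditions are compatible only for $k=m$.

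For the ``if'' direction, suppose a sequence $r_j\downarrow 0$ gives $(X,y,r_j^{-1}d_\infty)\to(\mathbb{R}^m,0,d_{Euc})$. The plan is to show that \emph{every} tangent cone at $y$ is isometric to $\mathbb{R}^m$, which then places $y\in\mathcal{R}_m\subset\mathcal{R}$. Along a diagonal subsequence $(M_{i(j)},y_{i(j)},r_j^{-2}g_{i(j)})$, the unit balls are Gromov-Hausdorff close to the Euclidean unit ball. In the non-collapsing regime, a Cheeger-Colding-type volume convergence---using the doubling estimate (\ref{eqn: volume_doubling}) together with Propositions \ref{T:poincare} and \ref{T:Sobolev}---should yield $r_j^{-m}\mu_{f_{i(j)}}(B(y_{i(j)},r_j))\to\omega_m$. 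At smaller scales $s\le r_j$, the exponential factor in Theorem \ref{T101} tends to $1$, so Bishop-Gromov monotonicity pins $\mu_{f_i}(B(y_i,s))/s^m$ close to $\omega_m$ as well. Almost-rigidity---a near-Euclidean volume ratio forcing near-Euclidean geometry, adapted to the Bakry-\'Emery setting following \cite{WZ17,ZZ17}---then identifies every rescaled limit at $y$ with $\mathbb{R}^m$.

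The main obstacle is the volume convergence step: while the Bishop-Gromov comparison is already set up in Theorem \ref{T101}, the upgrade from Gromov-Hausdorff to measure convergence under non-collapsing requires transplanting Cheeger-Colding's continuity of mass arguments into the weighted Bakry-\'Emery framework with a changing potential. The uniform gradient bound via $F$ and the local Poincar\'e/Sobolev inequalities of Section 2.1 should make this adaptation routine within any bounded distance from $p_\infty$, provided one tracks carefully how the constants depend on $F(D)$ and verifies they remain uniform for $y$ in any compact subset of $X$.
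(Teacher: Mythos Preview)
Your proposal follows essentially the same strategy as the paper, and the ``if'' direction matches closely: the paper invokes \cite[Lemma 4.11]{WZ17} for the volume convergence step and \cite[Corollary 4.8]{WZ17} for the volume almost-rigidity, then uses the Bishop--Gromov comparison of Theorem~\ref{T101} to propagate the near-Euclidean volume ratio down to all smaller scales---exactly your outline.

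For the ``only if'' direction, your reasoning has a small imprecision. The assertion that ``an $\mathbb{R}^k$ tangent cone would carry a canonical Hausdorff $k$-dimensional scaling'' presumes that the limit measure on the tangent cone is a constant multiple of $\mathcal{H}^k$, which is not automatic from Proposition~\ref{prop: renormalized_measure}. The paper avoids passing to the limit measure altogether and instead runs a covering argument directly on the approximating manifolds: if the tangent cone were $\mathbb{R}^l$ with $l<m$, one covers the rescaled unit ball $B_{r^{-1}d_i}(y_i,1)$ by $N_k\le k^l$ balls of radius $3k^{-1}$, each of which has rescaled volume $\le Ck^{-m}$ by Theorem~\ref{T101}, whence $|B_{r^{-1}d_i}(y_i,1)|_{\bar g_i}\le Ck^{l-m}\to 0$, contradicting the uniform lower volume bound coming from non-collapsing. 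Your argument can be completed in the same way---or, equivalently, by first noting that the limit measure on the tangent cone inherits the monotonicity $\nu(B(x,r))/r^m$ non-increasing from Theorem~\ref{T101} (with $K,F\to 0$ after rescaling) and then running the covering on $\mathbb{R}^k$ itself---but as written the dimension comparison step needs this extra sentence.
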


\begin{proof}
We first prove that if $y \in B(p_{\infty},D)$ is a regular point, then any
tangent cone at $y$ is $(\mathbb{R}^m,g_{Euc})$. Otherwise, all tangent cones at
$y$ are isometric to $(\mathbb{R}^l,g_{Euc})$ for some integer $l<m$. Then for
any $\ep>0$ there exists a small $r=r(\ep)>0$ such that
  \begin{align*}
    d_{GH}\left\{ \left( B_{r^{-1}d_{\infty}}(y, 2), r^{-1} d_{\infty} \right),
    \left( B_{d_{Euc}}(0, 2),  d_{Euc} \right) \right\}<\ep.
  \end{align*}
Now for $i$ large enough such that
  \begin{align} \label{E701}
    d_{GH}\left\{ \left( B_{r^{-1}d_i}(y_{i}, 2), r^{-1}d_i \right), \left(
    B_{d_{Euc}}(0, 2),  d_{Euc} \right) \right\} <\ep
 \end{align}
where $y_i \to y$. We fix $k=10/\ep$ and consider a family of disjoint balls
$\{B(x_k,k^{-1}),\, k=1,2,\cdots,N_k\}$ such that $\{B(x_k,2k^{-1})\}$ cover
$B(0,1) \subset \mathbb{R}^l$. It is clear that $N_k \le k^l$. If we take
$x_{k,i} \to x_k$, then it is clear from \eqref{E701} that
$B_{r^{-1}d_i}(y_i,1)$ is covered by $\{B_{r^{-1}d_i}(x_{k,i},3k^{-1})\}$ if
$i$ is sufficiently large.

We next estimate the volume of $B_{r^{-1}d_i}(x_{k,i},3k^{-1})$ by using
Theorem \ref{T101}. Let $\bar f_i=f_i-f_i(x_{k,i})$ and $\bar g_i=r^{-2}g_i$,
then $\overline{Rc_{\bar f_i}}=Rc_{f_i}\ge r^2\bar g_i$. In addition,
$|\na_{\bar g_i}\bar f_i|=r|\na_{g_i}f_i| \le rF(D)$. It is clear from Theorem
\ref{T101} that
  \begin{align*}
|B_{r^{-1}d_i}(x_{k,i},3k^{-1})|_{\bar g_i} \le Ck^{-m}
  \end{align*}
for some $C$ independent of $r$ and $k$ if $r$ and $k^{-1}$ are sufficiently
small. Therefore
  \begin{align} \label{E702}
|B_{r^{-1}d_i}(y_i,1)|_{\bar g_i}\le N_k |B_{r^{-1}d_i}(x_{k,i},3k^{-1})|_{\bar
g_i} \le CN_kk^{-m}\le Ck^{l-m}.
  \end{align}

However, $|B_{r^{-1}d_i}(y_i,1)|_{\bar g_i}=r^{-m}|B_{d_i}(y_i,1)|_{ g_i}$,
which by Theorem \ref{T101} again, is greater than a constant
$C=C(m,K,F(2D),D,V_0)$. If we let $\ep \to 0$, then we get a contradiction from
\eqref{E702}.

Conversely, for any point $y \in B(p_{\infty},D)$ such that there exists a
sequence $r_k \to 0$ satisfying
  \begin{align*}
d_{GH}\left\{ \left( B_{{r_k}^{-1}d_{\infty}}(y, 2), r_k^{-1} d_{\infty}
\right), \left( B_{d_{Euc}}(0, 2),  d_{Euc} \right) \right\} <\ep
  \end{align*}
for any $\ep>0$ if $k$ is sufficiently large. As before, with $r_k$ fixed, we
have
  \begin{align} \label{E703}
    d_{GH}\left\{ \left( B_{r_k^{-1}d_i}(y_{i}, 2), r_k^{-1}d_i \right), \left(
    B_{d_{Euc}}(0, 2),  d_{Euc} \right) \right\} <\ep
 \end{align}
if $i$ is sufficiently large. Now we can apply \cite[Lemma $4.11$]{WZ17} to
conclude that
  \begin{align*}
|B_{r_k^{-1}d_i}(y_i,1)|_{\bar g_i} \ge (1-\Psi(\ep))\omega_m
  \end{align*}
for $\Psi(\ep) \to 0$ if $\ep \to 0$.
By Theorem \ref{T101}, it is clear that for any $s \le 1$,
  \begin{align*}
|B_{r_k^{-1}d_i}(y_i,s)|_{\bar g_i}  \ge (1-\Psi(\ep))\omega_ms^m.
  \end{align*}
In other words, for any $r \le r_k$,
  \begin{align*}
|B_{r^{-1}d_i}(y_i,1)|_{r^{-2} g_i}  \ge (1-\Psi(\ep))\omega_m.
  \end{align*}
From \cite[Corollary $4.8$]{WZ17} which we apply to the metric $r^{-2}g_i$ and
$\bar f_i$, it implies that
  \begin{align*}
d_{GH}\left\{ \left( B_{r^{-1}d_i}(y_i,1), r^{-1}d_i \right), \left(
B_{d_{Euc}}(0, 1),  d_{Euc} \right) \right\} <\Psi(\ep).
  \end{align*}
Note that the above inequality holds uniformly for any $r \le r_k$. By taking
$i \to \infty$, we have
  \begin{align*}
d_{GH}\left\{ \left( B_{r^{-1}d_{\infty}}(y_{\infty},1), r^{-1}d_{\infty}
\right), \left( B_{d_{Euc}}(0, 1),  d_{Euc} \right) \right\} <\Psi(\ep).
  \end{align*}
We can conclude immediately that all tangent cones at $y_{\infty}$ is
$(\mathbb{R}^m,g_{Euc})$.
\end{proof}

In general, we notice that for any fixed $D>0$, the concepts of (weakly)
$k$-Euclidean points in $B(p_{\infty},D)$ are defined indifferently from the
case with a uniform Ricci curvature lower bound (see Definition 0.3, Definition
2.2 and Definitions of $\underline{\mathcal{WE}}_k$ and
$(\mathcal{WE}_k)_{\varepsilon}$ in~\cite{ChCo97}). Therefore, the concepts
involved in proving~\cite[Theorem 2.1]{ChCo97} are parallel to the case of
$\mathcal{N}_m(F,K)$, and the very same proof leads to the following
\begin{prop}[Neligibility of the singular set]\label{prop: zero_singular}
Suppose a sequence $\{(M_i,p_i,g_i,f_i)\}\subset \mathcal{N}_m(F,K)$ converges
to a limit metric space $(X,p_{\infty},d_{\infty})$ in the
pointed-Gromov-Hausdorff topology, together with a limit function
$f_{\infty}$ and a limit measure $\mu_{\infty}$, then
\begin{align}
\mu_{\infty}(\mathcal{S})\ =\ 0.
\end{align}
\end{prop}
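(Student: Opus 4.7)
The plan is to mimic Cheeger-Colding's proof of [ChCo97, Theorem~2.1], exploiting the fact that all the necessary analytic inputs --- volume doubling (Theorem~\ref{T101}), the segment inequality (Theorem~\ref{T104}), and the local $L^2$-Poincar\'e inequality (Proposition~\ref{T:poincare}) --- have already been transcribed to the class $\mathcal{N}_m(F,K)$, with constants uniform in the sequence on any fixed ball $B(p_i, D)$. By $\sigma$-additivity and inner regularity of $\mu_\infty$, it suffices to prove $\mu_\infty(\mathcal{S} \cap B(p_\infty, D)) = 0$ for every $D>0$.

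Following [ChCo97, Section 2], I would introduce the weakly $\varepsilon$-$k$-Euclidean sets $(\mathcal{WE}_k)_\varepsilon$ and their scale-independent analogues $\underline{\mathcal{WE}}_k$: a point $x$ belongs to $(\mathcal{WE}_k)_\varepsilon$ at scale $r$ if the pointed rescaling $(B(x,r), r^{-1}d_\infty, x)$ is $\varepsilon$-Gromov-Hausdorff close to a unit ball in $\mathbb{R}^k \times Y$ for some pointed metric space $(Y, y_0)$. The first main step is to partition $\mu_\infty$-a.e.\ of $B(p_\infty, D)$ into the layers $\underline{\mathcal{WE}}_k \setminus \underline{\mathcal{WE}}_{k+1}$ for $k = 0, 1, \ldots, m$; the bottom layer $\underline{\mathcal{WE}}_0$ --- points admitting no Euclidean direction at any scale --- is shown $\mu_\infty$-null using the volume comparison of Theorem~\ref{T101} applied to the rescaled metrics $r^{-2}g_i$, whose Bakry-\'Emery Ricci lower bound $-r^2 K$ degenerates harmlessly to $0$ as $r \to 0$.

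The second and more delicate step is to promote membership in $\underline{\mathcal{WE}}_k$ to membership in $\mathcal{R}_k$ for $\mu_\infty$-a.e.\ $x$. Here one invokes the almost-splitting theorem of Cheeger-Colding: on a weakly $\varepsilon$-$k$-Euclidean ball, the segment inequality Theorem~\ref{T104} combined with the Poincar\'e inequality Proposition~\ref{T:poincare} produces $k$ approximately parallel $f$-harmonic functions with $L^2$-small Hessian, and the weighted Bochner identity together with $Rc_f \geq -Kg$ forces a genuine isometric $\mathbb{R}^k$-factor in every tangent cone at $x$. Iterating in $k$ and selecting the maximal such index then places $\mu_\infty$-a.e.\ $x$ in some $\mathcal{R}_k$, completing the proof.

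The main obstacle is that Cheeger-Colding's original argument is phrased for a uniform Ricci lower bound, whereas here the relevant condition $Rc_f \geq -Kg$ is twisted by a potential whose gradient is bounded only by $F(D)$ within $B(p_\infty, D)$. The saving grace is precisely the philosophy already established in Section 2.1: every one of the required ingredients --- doubling, segment, Poincar\'e, and the weighted Bochner estimate --- admits a weighted version with constants that depend additionally on $F(D)$ but are uniform in the sequence. Consequently the Cheeger-Colding arguments transcribe verbatim after replacing $\Delta$ by $\Delta_f$ and $dV$ by $d\mu_f$ and absorbing the $F(D)$ factors into the constants; no genuinely new geometric content is required, only careful bookkeeping of the distance-dependent gradient bound.
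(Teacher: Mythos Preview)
Your proposal is correct and follows essentially the same approach as the paper. The paper does not spell out a proof either: it simply observes that the concepts of (weakly) $k$-Euclidean points, $\underline{\mathcal{WE}}_k$ and $(\mathcal{WE}_k)_{\varepsilon}$ carry over unchanged from~\cite{ChCo97}, and that ``the very same proof'' of~\cite[Theorem~2.1]{ChCo97} then yields the result --- which is precisely the strategy you outline in more detail.
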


In the case of the measured, pointed Gromov-Hausdorff limits of a sequence of
Ricci shrinkers, we of course have the limit probability measure satisfying
$\rho_{\infty}(\mathcal{S})=0$.

\section{Parabolic smoothing of the distance function}

This section contains the analytic core of the paper: the $f$-heat kernel
bounds on manifolds in $\mathcal{N}_m(F,K)$, Theorem \ref{T201}, and their
applications in the parabolic smoothing of the distance functions (Lemma
\ref{L303} and Lemma \ref{L304}). 
\subsection{Heat kernel on manifolds in $\mathcal N_m(F,K)$}

Given a metric measure space $(M,p,g,\mu_f)$ in $\mathcal N_m(F,K)$, note
that the weighted Laplacian operator $\Delta_f$ is self-adjoint with respect to
the measure $\mu_f$. Moreover, we have the following Bochner formula for any
smooth function $u$ on $M$,
\begin{align}\label{E200} 
\frac{1}{2} \df|\na u|^2\ =\ |Hess_u|^2+Rc_f(\na u,\na u)+\la \na \df u,\na u
\ra.
\end{align}
If $u$ is defined on the spacetime $M \times [0,T)$ and satisfies the weighted
heat equation
\begin{align*} 
\square_f u\ \coloneqq\ (\pt-\Delta_f)u\ =\ 0,
\end{align*}
then a parabolic version of \eqref{E200} is
\begin{align}\label{E200a} 
\frac{1}{2}\square_f |\na u|^2\ =\ -|Hess_u|^2-Rc_f(\na u,\na u).
\end{align}

Now we denote the heat kernel by $H(x,y,t)$ or $H_f(x,y,t)$ if we want to
emphasize the role of $f$. The existence and uniqueness of $H$ can be found,
for example in \cite[Theorem $7.7$, Corollary $9.6$]{Gr09}. To apply
\cite[Corollary $9.6$]{Gr09}, we must check the stochastic completeness of
$(M,g,\mu_f)$. By our definition, $(M,g,\mu_f)$ is a smooth $CD(K,\infty)$
space, then there exists a constant $C$ such that
\begin{align*} 
\mu_f(B(p,r))\ \le\ Ce^{Cr^2}.
\end{align*}
The proof of the above inequality can be found in \cite[Theorem
$18.12$]{Vil08}. Then the stochastic completeness follows immediately from
\cite[Theorem $11.8$]{Gr09}.

We have the following upper and lower bound of $H$, see also \cite[Theorem
$1.1$]{WW16}:
\begin{thm}
\label{T201}
Let $(M,p,g,\mu_f)$ be a space in $\mathcal N_m(F,K)$.
For any $D>0$, there exists a constant $C=C(m,F,K,D)>1$ such that
\begin{align}\label{E201} 
\frac{C^{-1}}{\mu_f(B(x,\sqrt t))} \exp{\lc-\frac{d^2(x,y)}{C^{-1}t}\rc}\
\le\ H(x,y,t)\ \le\ \frac{C}{\mu_f(B(x,\sqrt t)}
\exp{\lc-\frac{d^2(x,y)}{Ct}\rc}.
\end{align}
for any $x,\,y \in B(q,D/3)$ with $d(p,q) \le D$ and $0<t<D^2/4$.
\end{thm}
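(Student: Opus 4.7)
The plan is to adapt the classical Saloff-Coste--Grigoryan machinery for two-sided Gaussian heat kernel bounds to the weighted setting $(M, g, \mu_f)$. We already possess the three key analytic ingredients localized to balls contained in $B(p,D)$: the volume doubling property \eqref{eqn: volume_doubling}, the local $L^2$-Poincar\'e inequality (Proposition \ref{T:poincare}), and the local $L^2$-Sobolev inequality (Proposition \ref{T:Sobolev}), with constants depending only on $m$, $F$, $K$, $D$. Since $x, y \in B(q, D/3)$ with $d(p,q)\le D$ and $\sqrt{t} < D/2$, every ball appearing in the argument is contained in $B(p, 2D)$, where these inputs apply uniformly. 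Recall also that $\Delta_f$ is self-adjoint with respect to $\mu_f$ and that $(M, g, \mu_f)$ is stochastically complete, so the heat kernel $H$ exists, is unique, and enjoys the usual semigroup and symmetry properties.

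For the upper bound I would proceed in two steps. First, run a standard Moser iteration on the weighted heat equation $\square_f u = 0$ using Proposition \ref{T:Sobolev}; applied to $H(x, \cdot, t)$ this yields an on-diagonal bound $H(x, x, t) \le C/\mu_f(B(x, \sqrt{t}))$ for $0<t<D^2/4$, with $C = C(m,F,K,D)$. Second, upgrade to an off-diagonal Gaussian bound via Davies' integrated maximum principle: for a bounded Lipschitz function $\phi$ with $|\nabla \phi| \le 1$ and $\xi > 0$, set
\[
E_\xi(t) := \int_M e^{2\xi \phi}\, H(x_0, \cdot, t)^2 \, \dmu_f.
\]
Self-adjointness of $\Delta_f$ gives $E_\xi'(t) \le 4\xi^2\, E_\xi(t)$, so Gronwall furnishes $E_\xi(t) \le e^{4\xi^2 t} E_\xi(0)$. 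Combining this with the on-diagonal bound and optimizing $\xi$ against $\phi(\cdot) = d(x, \cdot) \wedge d(x,y)$ produces the Gaussian upper bound in \eqref{E201}.

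For the lower bound I would invoke the local parabolic Harnack inequality: Saloff-Coste's theorem asserts that local volume doubling plus local $L^2$-Poincar\'e implies a scale-invariant parabolic Harnack inequality for nonnegative solutions of $\square_f u = 0$ on parabolic cylinders $B(q', 2r)\times[t_0, t_0+4r^2] \subset B(p, D)\times(0, D^2)$. A Moser-type semigroup argument, together with the upper bound already established, yields the on-diagonal lower bound $H(x, x, t) \ge c/\mu_f(B(x, \sqrt{t}))$. Iterating the Harnack inequality along a chain of $\lesssim d(x,y)^2/t$ balls of size $\sqrt{t}$ connecting $(x, t)$ to $(y, 2t)$ propagates this lower bound into the Gaussian form in \eqref{E201}, the exponential factor being precisely the cost of the chain.

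The main obstacle is the localization: our doubling, Poincar\'e, and Sobolev constants are not global but depend on $F$ evaluated at scales comparable to $D$, so every step of the Moser iteration, Davies estimate, and Harnack chaining must be confined to $B(p, 2D)$ via appropriate spatial cutoffs, and the growth of the constants with $D$ must be tracked carefully. The hypotheses $x, y \in B(q, D/3)$ and $t < D^2/4$ are chosen precisely so that all intermediate balls lie inside $B(p, 2D)$, preventing the $F(\cdot)$-dependent constants from degenerating, and ensuring the final constant $C = C(m, F, K, D)$ is well defined.
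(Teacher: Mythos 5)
Your proposal is correct and takes essentially the same route as the paper: after verifying local volume doubling and the local $L^2$-Poincar\'e inequality for the weighted Dirichlet form $\mathcal{E}(u,v)=\int \langle \nabla u,\nabla v\rangle\,\dmu_f$, the paper simply cites Sturm's local Dirichlet-space results (Theorems 4.1 and 4.8 of the reference \cite{KTS96}), which encapsulate exactly the doubling-plus-Poincar\'e $\Rightarrow$ parabolic Harnack $\Rightarrow$ two-sided local Gaussian bounds pipeline you sketch via Moser iteration, Davies' method, and Harnack chaining. In effect you unpack the black box the paper invokes, using the same hypotheses and the same localization to balls near $B(p,D)$, so no genuinely different idea is involved.
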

\begin{proof}
The upper and lower bound of the weighted heat kernel $H$ follow essentially
from \cite[Theorem $4.1$, Theorem $4.8$]{KTS96}. In our setting, it is clear
that the Dirichlet form is defined as
\begin{align*} 
\mathcal E(u,v)=\int \la \na u,\na v \ra\ \dmu_f.
\end{align*}
Then the Markov semigroup $(P_t)_{t \ge 0}$ satisfies for any $t>0$ and $u \in
L^2(M,\mu_f)$,
\begin{align*} 
\frac{\text{d}P_t u}{\text{d}t}\ =\ \Delta_f P_tu.
\end{align*}
Since we have the local volume doubling property (\ref{eqn: volume_doubling})
and $L^2$-Poincar\'e inequality (Proposition \ref{T:poincare}), then conclusion
follows immediately.
\end{proof}

\begin{cor} \label{C201a}
With the same conditions as those in Theorem \ref{T201}, there exists a uniform 
constant $C=C(m,F,K,D)>1$ such that
\begin{align*} 
\int_{B(q,D/3) \backslash B(q,r)}H(q,y,t)\ \dmu_f(y)\ \le\ Cr^{-2}t
\end{align*}
for any $r \le D/10$ and $t \le D^2/4$.
\end{cor}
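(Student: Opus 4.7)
The plan is to deduce the tail bound from a second-moment estimate for the $f$-heat kernel, via Chebyshev's inequality:
$$
\int_{B(q, D/3)\setminus B(q, r)} H(q,y,t)\, \dmu_f(y) \;\le\; \frac{1}{r^2} \int_{B(q, D/3)} d(q,y)^2\, H(q,y,t)\, \dmu_f(y).
$$
It therefore suffices to prove the second-moment bound $\int_{B(q,D/3)} d(q,y)^2 H(q,y,t)\,\dmu_f(y) \le Ct$ uniformly for $t \in (0, D^2/4]$, with $C$ depending only on $m$, $F$, $K$ and $D$.

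For this I would split into two cases. When $\sqrt t \ge D/3$, simply use $d(q,y)^2 \le (D/3)^2 \le t$ on the integration domain together with the stochastic completeness bound $\int H\,\dmu_f \le 1$ (noted in the paragraph preceding Theorem \ref{T201}). When $\sqrt t < D/3$, apply the Gaussian upper bound of Theorem \ref{T201} and decompose $B(q, D/3)$ dyadically relative to the heat-kernel scale $\sqrt t$: set $A_0 = B(q, \sqrt t)$ and $A_k = B(q, 2^k \sqrt t)\setminus B(q, 2^{k-1}\sqrt t)$ for $1 \le k \le k_0$, with $k_0$ the smallest index such that $2^{k_0}\sqrt t \ge D/3$. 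On each shell $A_k$ one has the pointwise bound $d(q,y)^2\, e^{-d(q,y)^2/(C_1 t)} \le 4^k t\, e^{-4^{k-1}/C_1}$ (with the $k=0$ case handled by the trivial estimate $\le t$), while iterating the local volume doubling \eqref{eqn: volume_doubling} on a reference ball such as $B(p, 3D)$ that contains every shell (which is safe since $d(p,q) \le D$ and the largest dyadic radius is $\le 2D/3$) gives $\mu_f(A_k) \le C_2^{k}\mu_f(B(q, \sqrt t))$. Summing produces
$$
\frac{C_1}{\mu_f(B(q, \sqrt t))} \int_{B(q, D/3)} d(q,y)^2\, e^{-d(q,y)^2/(C_1 t)}\, \dmu_f(y) \;\le\; C\, t,
$$
after cancellation of the factor $\mu_f(B(q, \sqrt t))$ against its appearance in the Gaussian bound.

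The main technical point is this last dyadic summation, where the exponential decay has to dominate the geometric growth $4^k C_2^k$ coming from iterated doubling; since $C_2 = C_2(m,F,K,D)$ is fixed, this balance is elementary. Once the second-moment bound is in hand, the Chebyshev inequality at the outset yields $\int_{B(q, D/3)\setminus B(q, r)} H\, \dmu_f \le Ct/r^2$, as desired. I do not expect serious obstacles on this route: all the ingredients---Gaussian bounds, local doubling, and stochastic completeness---have been established earlier in the paper, and only standard bookkeeping is needed to handle the dyadic decomposition uniformly in $t$ and to ensure the reference radius is chosen large enough for volume doubling to apply on every shell.
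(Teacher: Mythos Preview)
Your proof is correct. Both your argument and the paper's rely on the same core ingredients---the Gaussian upper bound of Theorem~\ref{T201}, the local volume doubling \eqref{eqn: volume_doubling}, and a dyadic shell decomposition---but you package them via a second-moment estimate followed by Chebyshev's inequality, whereas the paper decomposes the tail $B(q,D/3)\setminus B(q,r)$ directly into shells $B(q,2^k\sqrt t)\setminus B(q,2^{k-1}\sqrt t)$ for $k$ running from $N_0=\lceil\log_2(r/\sqrt t)\rceil$ upward, so that the $r$-dependence enters through the starting index of the sum (via the elementary bound $\sum_{k\ge 0}e^{-4^k l}\le Cl^{-1}$ with $l\approx r^2/t$) rather than through a Chebyshev prefactor. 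The paper's route is marginally more direct, avoiding the case split on whether $\sqrt t\ge D/3$ and the second-moment detour, but your organization is a standard and equally valid alternative; neither gains anything essential over the other here.
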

\begin{proof}
By computation
\begin{align*} 
&\int_{B(q,D/3) \backslash B(q,r)}H(q,y,t)\ \dmu_f(y)\\
 \le\ &\sum_{k=N_0}^{N_1} \int_{B(q,2^k\sqrt t)\ \backslash B(q,2^{k-1}\sqrt t)}
 H(q,y,t)\ \dmu_f(y) \\
\le\ &\sum_{k=N_0}^{N_1} Ce^{-C^{-1}4^k}\frac{\mu_f(B(x,2^k\sqrt
t)}{\mu_f(B(x,2^{k-1}\sqrt t)} \\
\le\ &C\sum_{k=N_0}^{\infty} e^{-C^{-1}4^k} \le Cr^{-2}t
\end{align*}
where $N_0=\left \lceil\log_2 \frac{r}{\sqrt t}\right\rceil$ and
$N_1=\left\lceil\log_2{\frac{D}{3\sqrt t}}\right\rceil$. Here we have used the
elementary inequality
\begin{align*} 
\sum_{k=0}^{\infty} e^{-4^kl}\ \le\ Cl^{-1}
\end{align*}
for any $l>0$.
\end{proof}

We need the following Li-Yau gradient estimate from \cite[Theorem
$3.1,\,(a)$]{K16}:
 \begin{thm}
\label{T201a}
With the same conditions, there exists a constant $C=C(m,F,K,D)>1$ such that 
\begin{align*} 
C^{-1}\frac{|\na u|^2}{u^2}-\frac{u_t}{u}\ \le\ \frac{C}{t}
\end{align*}
on $B(q,D) \times [0,D^2]$.
\end{thm}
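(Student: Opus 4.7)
The plan is to adapt the classical Li-Yau maximum principle argument to the $f$-heat equation, with the main technical twist being that the Bakry-\'Emery dimension is infinite while the target bound is effectively dimension-$m$. I would pass to $w:=\log u$, for which the heat equation reads $w_t=\df w+|\na w|^2$, and aim to control the quantity $H:=\alpha|\na w|^2-w_t$ for some $0<\alpha<1$ by showing $tH\le C(m,F,K,D)$ on $B(q,D)\times[0,D^2]$; the claim $C^{-1}|\na u|^2/u^2-u_t/u\le C/t$ then follows by choosing $\alpha:=C^{-1}$ small.

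The first substantive step is to upgrade the Bakry-\'Emery Bochner formula (\ref{E200a}) into a ``dimension-$m$'' inequality despite having only $Rc_f\ge-Kg$. Using $|Hess_u|^2\ge m^{-1}(\Delta u)^2$ and writing $\Delta u=\df u+\la\na f,\na u\ra$, a Cauchy-Schwarz split with a small parameter $\ep>0$ yields
\[
|Hess_u|^2\ \ge\ \frac{1-\ep}{m}(\df u)^2-\frac{\ep^{-1}-1}{m}F(2D)^2|\na u|^2
\]
on $B(p,2D)$, where the bound $|\na f|\le F(2D)$ comes from Definition \ref{defn:moduli}(b). Combined with $Rc_f\ge -Kg$, this yields an ``effective'' Bochner inequality with effective curvature constant $K':=K+C(m)F(2D)^2$ and effective dimension $m$, which is the input needed for a Li-Yau-type estimate.

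A direct computation of $\square_f(tH)$ using the modified Bochner inequality then produces a self-improving differential inequality of the schematic form
\[
\square_f(tH)\ \le\ H+2t\la\na H,\na w\ra-\frac{2\alpha t(1-\ep)}{m}(\df w)^2+C(m,\alpha,K,F(2D))\,t|\na w|^2,
\]
where the crucial $(\df w)^2$ term, after substituting $\df w=(\alpha-1)|\na w|^2-H$, produces a quadratic lower bound in $H$ that self-controls $H$. To restrict to $B(q,D)$, I multiply by a standard spacetime cut-off $\phi$ supported in $B(q,D)\times[0,D^2]$, whose spatial derivatives and $f$-Laplacian are bounded in terms of $m,F,K,D$ by the Laplacian comparison in Lemma \ref{T102}. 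At the interior maximum of $\phi\cdot tH$, the usual Calabi trick converts the parabolic inequality into a quadratic in $(\phi tH)|_{\max}$ whose larger root is bounded by $C(m,F,K,D)$, yielding the desired estimate.

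The principal obstacle will be ensuring that the two places where $F(2D)$ enters --- the dimension-collapse term in the Bochner inequality, and the $\la\na f,\na\phi\ra$ contribution from the cut-off --- remain strictly lower-order compared to the $H^2$ term coming from $(\df w)^2$. In both cases $F(2D)$ only modifies the effective curvature $K'$ and cut-off error terms, never the leading quadratic self-improvement, which is precisely what allows the (potentially large) quantity $F(2D)$ to enter the final constant $C(m,F,K,D)$ benignly.
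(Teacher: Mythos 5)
Your outline is sound, and it is worth noting that the paper does not actually prove this statement: it is quoted directly from Khanh [K16, Theorem 3.1(a)], so your proposal amounts to a self-contained reconstruction of the cited result rather than a variant of an argument in the paper. What you propose is essentially the standard way such results are obtained: since $|\nabla f|\le F(2D)$ on $B(q,D)\subset B(p,2D)$, the inequality $|Hess_w|^2\ge m^{-1}(\Delta w)^2$ together with the Cauchy--Schwarz split of $\Delta w=\Delta_f w+\langle\nabla f,\nabla w\rangle$ converts the infinite-dimensional hypothesis $Rc_f\ge -Kg$ into an effective finite-dimensional Bochner inequality with curvature constant $K+C(m,\ep)F(2D)^2$ (equivalently, one could bound the $N$-Bakry--\'Emery tensor $Rc_f-\tfrac{\nabla f\otimes\nabla f}{N-m}$ from below), after which the localized Li--Yau maximum principle for $t(\alpha|\nabla w|^2-w_t)$ goes through verbatim, with the cut-off's $f$-Laplacian controlled by Lemma \ref{T102} and the Calabi trick handling the cut locus. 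Your computation of $\square_f(tH)$, the substitution $\Delta_f w=(\alpha-1)|\nabla w|^2-H$, and the quadratic self-improvement at the maximum are all correct in shape; the constants indeed only pick up $F(2D)$ through the effective curvature and the cut-off errors, never in the leading $H^2$ term, so the claimed dependence $C=C(m,F,K,D)$ is right. Two small points to tidy up: the Hessian lower bound should be stated for $w=\log u$ (you write it for $u$ but use it for $w$ in the display), and the localization requires the positive solution to live on a strictly larger region, say $B(q,2D)\times[0,D^2]$ (or globally, as for the $f$-heat kernel to which the paper applies it), with the resulting additive terms $C(1/D^2+K+F(2D)^2)$ folded into the $C/t$ form using $t\le D^2$; also one should note that at the maximum point one may assume $\phi tH>0$, since otherwise there is nothing to prove.
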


Next we need the following Harnack inequality which is a special case of
\cite[Theorem $3.1$]{WW16}:
 \begin{thm}
\label{T201a}
With the same conditions,  there exists a constant
$C=C(m,K,F,D)>1$ such that
\begin{align*} 
\sup_{B(q,r/2)} u(\cdot,r^2/2)\ \le\ Cu(q,r^2)
\end{align*}
where $u$ is a positive solution of the weighted heat equation on $B(q,r)
\times [0,r^2]$.
\end{thm}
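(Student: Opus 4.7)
The plan is to derive the Harnack inequality directly from the Li--Yau gradient estimate stated just above (Theorem~\ref{T201a}), via the classical trick of integrating along a spacetime curve joining $(x_1, r^2/2)$, for an arbitrary $x_1 \in B(q, r/2)$, to $(q, r^2)$. Since the Li--Yau estimate is a Bochner-plus-maximum-principle consequence of the Bakry--\'Emery Ricci lower bound and the linear bound on $|\na f|$ within $B(p,D)$, no further geometric comparison tool is required for the Harnack step beyond what has been established.

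Rearranging the gradient estimate, on $B(q, r) \times (0, r^2]$ one has
\begin{align*}
(\log u)_t\ \ge\ C^{-1} |\na \log u|^2\ -\ \frac{C}{t}.
\end{align*}
For each $x_1 \in B(q, r/2)$ I would pick a constant-speed minimal geodesic $\gamma : [r^2/2, r^2] \to M$ with $\gamma(r^2/2) = x_1$ and $\gamma(r^2) = q$; since $d(x_1, q) \le r/2$, the image of $\gamma$ lies in $\overline{B(q, r/2)} \subset B(q, r)$, so the Li--Yau bound applies along the entire curve. Computing along the spacetime path $(\gamma(s), s)$ and applying Young's inequality in the form $\la \na \log u, \dot\gamma \ra \ge -C^{-1}|\na \log u|^2 - \tfrac{C}{4}|\dot\gamma|^2$, the ``good'' quadratic gradient term from Li--Yau cancels against the bad term from Young, leaving
\begin{align*}
\frac{d}{ds} \log u(\gamma(s), s)\ \ge\ -\frac{C|\dot\gamma|^2}{4}\ -\ \frac{C}{s}.
\end{align*}

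Integrating from $s = r^2/2$ to $s = r^2$, and using that the constant-speed parametrization gives $\int_{r^2/2}^{r^2} |\dot\gamma|^2\,ds = 2 d(x_1, q)^2/r^2 \le 1/2$, one obtains
\begin{align*}
\log \frac{u(x_1, r^2/2)}{u(q, r^2)}\ \le\ C \log 2\ +\ \frac{C}{8}\ \le\ C',
\end{align*}
and taking the supremum over $x_1 \in B(q, r/2)$ yields the asserted Harnack inequality after renaming the constant.

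The main obstacle is more bookkeeping than conceptual: one must track the dependence of the Li--Yau constant on $(m, F, K, D)$ through the comparison geometry developed in Section~2.1, which requires that the ball $B(q, r)$ on which $u$ lives is contained in the fixed region $B(p, D)$ where $|\na f| \le F(D)$. As long as this containment holds---part of the ``same conditions'' inherited from Theorem~\ref{T201}---the produced constants are uniform and depend only on $m, F, K, D$, as required. A purely Moser-style alternative (de Giorgi--Nash--Moser iteration from the Sobolev inequality of Proposition~\ref{T:Sobolev} and the volume doubling \eqref{eqn: volume_doubling}) is also available and would give the same conclusion, but the Li--Yau route above is the shortest given the tools already set up.
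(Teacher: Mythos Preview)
Your argument is correct: the classical Li--Yau-to-Harnack integration along a spacetime geodesic works exactly as you describe, and the bookkeeping on the constants goes through since the curve stays in $\overline{B(q,r/2)}$ where the gradient estimate from the preceding theorem is available.

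The paper, however, does not supply a proof at all --- it simply invokes the result as a special case of \cite[Theorem~3.1]{WW16}. So your route is genuinely different in that it is self-contained within the tools already developed in this section (specifically the Li--Yau estimate just stated), whereas the paper outsources the statement to the literature. What your approach buys is internal coherence: once one has accepted the Li--Yau inequality, the Harnack inequality is a three-line consequence, and no appeal to Moser iteration or to the Dirichlet-form machinery underlying \cite{WW16} is needed. The citation route, on the other hand, is shorter on the page and does not rely on the somewhat loosely stated domain of validity of the Li--Yau theorem above (which reads ``on $B(q,D)\times[0,D^2]$'' without specifying the domain of $u$); you implicitly use that the local Li--Yau estimate on $B(q,r)\times[0,r^2]$ gives the interior bound on $B(q,r/2)$, which is standard but worth flagging.
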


For later applications, we now show the following gradient estimate:
\begin{lem}[Cheng-Yau estimate]
\label{L200}
Let $(M,p,g,\mu_f)$ be a space in $\mathcal N_m(F,K)$. Consider a smooth
function $u$ on $B(q,r)\times [s,s-r^2]$ with $r\le D$ which satisfies the
weight heat equation and is bounded. Then there exists a constant
$C=C(m,K,F,D)>0$ such that for any $r \le D$,
\begin{align*} 
\underset{B(q,r/2) \times [s-r^2/2,s]}{|\nabla u|}\ \le\ Cr^{-1}\underset{B(q,r)
\times [s-r^2,s]}{\text{osc}}\,u.
\end{align*}
\end{lem}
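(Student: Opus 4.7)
The plan is to prove this Cheng-Yau type gradient estimate via the classical Bernstein maximum principle argument, adapted to the weighted setting by using the weighted Bochner formula \eqref{E200a} and the $f$-Laplacian comparison from Lemma \ref{T102}. By replacing $u$ with $u - \inf u$ over the parabolic cylinder $B(q,r)\times [s-r^2,s]$, I may assume $0 \le u \le A$, where $A := \operatorname{osc} u$ over the cylinder. Since $u$ still satisfies $\square_f u = 0$, the weighted Bochner formula \eqref{E200a} combined with $Rc_f \ge -Kg$ yields the parabolic estimate $\square_f |\nabla u|^2 \le -2|\operatorname{Hess} u|^2 + 2K|\nabla u|^2$, while direct computation gives $\square_f u^2 = -2|\nabla u|^2$.

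Next, I would construct a smooth spatial-temporal cutoff $\phi$ supported in $B(q,r)\times[s-r^2,s]$, equal to $1$ on $B(q,r/2)\times[s-r^2/2,s]$, and satisfying $|\nabla\phi|^2 \le Cr^{-2}\phi$ together with $|\partial_t\phi| + |\Delta_f\phi| \le Cr^{-2}$ for some $C=C(m,K,F,D)$; the bound on the $f$-Laplacian uses Lemma \ref{T102} applied to the distance function from $q$. Consider the auxiliary function $G := \phi^2|\nabla u|^2 + \lambda u^2$, where $\lambda$ is a large constant to be chosen. On the parabolic boundary $\phi = 0$, so $G \le \lambda A^2$ there.

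Now examine an interior maximum $(x_0,t_0)$ of $G$ on the cylinder, where $\nabla G=0$ and $\square_f G \ge 0$. Expanding $\square_f G$ using the product rule and substituting the Bochner inequality together with $\square_f u^2 = -2|\nabla u|^2$, the critical point condition $\phi^2\nabla|\nabla u|^2 = -|\nabla u|^2\nabla\phi^2$, and the Kato inequality $|\operatorname{Hess} u|^2 \ge |\nabla|\nabla u|^2|^2/(4|\nabla u|^2)$ (used to absorb the cross term $-2\nabla\phi^2\cdot \nabla|\nabla u|^2$ into the Hessian term), yields a pointwise inequality at $(x_0,t_0)$. Choosing $\lambda$ of size $C(m,K,F,D)r^{-2}$ large enough so that $-2\lambda|\nabla u|^2$ absorbs both the Bochner positive term $2K\phi^2|\nabla u|^2$ and the cutoff error terms involving $|\square_f\phi^2|$ and $|\nabla\phi|^2/\phi$, I conclude $\phi^2|\nabla u|^2(x_0,t_0) \le C(m,K,F,D)r^{-2}A^2$. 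Thus $G \le Cr^{-2}A^2$ throughout the cylinder, and restricting to $B(q,r/2)\times[s-r^2/2,s]$ where $\phi=1$ gives the desired estimate $|\nabla u|^2 \le Cr^{-2}A^2$.

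The main technical obstacle lies in the careful absorption of cross terms in the expansion of $\square_f G$: specifically, balancing the negative $-2\phi^2|\operatorname{Hess} u|^2$ gain from Bochner against the indefinite $-2\nabla\phi^2\cdot\nabla|\nabla u|^2$ via Kato's inequality, and then choosing $\lambda$ large enough so that the contribution of $\lambda u^2$ to $\square_f G$ produces the dominant negative $-2\lambda|\nabla u|^2$ required to dominate all remaining positive terms. The dependence on $F$ enters only through the cutoff construction via Lemma \ref{T102}, and the dependence on $K$ enters through the Bochner inequality; both are absorbed uniformly into the final constant $C(m,K,F,D)$.
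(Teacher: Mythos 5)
Your Bernstein-type maximum principle argument is a valid route, but it is genuinely different from the paper's proof, which is integral rather than pointwise: the paper multiplies $\square_f u=0$ by $\eta^2 u$ (with a cutoff needing only $|\psi'|\le Cr^{-1}$, no Laplacian bounds) and integrates against $\dmu_f$ to get a Caccioppoli bound on $\iint|\nabla u|^2\,\dmu_f\,\text{d}t$, then observes from the Bochner formula that $\square_f\bigl(e^{-2Kt}|\nabla u|^2\bigr)\le 0$ and invokes parabolic Moser iteration (via the volume doubling and Poincar\'e/Sobolev inequalities already established for $\mathcal N_m(F,K)$) to pass from the $L^2$ mean of $|\nabla u|^2$ to its sup on the smaller cylinder. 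Your approach buys elementarity -- no Moser iteration machinery -- at the cost of needing a cutoff with $f$-Laplacian control, and here two details in your write-up need repair, though neither is fatal. First, Lemma \ref{T102} only yields a one-sided bound $\Delta_f d_q\le \frac{m-1}{d_q}+(m-1)\sqrt K+F(2D)$, so for $\phi=\psi(d_q)$ with $\psi'\le 0$ you can only conclude $\Delta_f\phi\ge -Cr^{-2}$ (in the barrier sense, using Calabi's trick at the cut locus), not the two-sided bound $|\Delta_f\phi|\le Cr^{-2}$ you claim; fortunately, in the inequality $0\le\square_f G$ at the maximum the term $|\nabla u|^2\,\square_f\phi^2$ only requires an upper bound on $-\Delta_f\phi^2$, which is exactly the side you have. (Do not instead quote Theorem \ref{T201b} for the cutoff: its proof uses Lemma \ref{L200}, so that would be circular -- your choice to build $\phi$ from the distance function is the right one.) Second, the stated critical-point identity $\phi^2\nabla|\nabla u|^2=-|\nabla u|^2\nabla\phi^2$ omits the $2\lambda u\nabla u$ term from $\nabla G=0$; but it is also unnecessary, since the cross term is handled directly by Cauchy--Schwarz and Kato, $-2\nabla\phi^2\cdot\nabla|\nabla u|^2\le 2\phi^2|Hess_u|^2+8|\nabla\phi|^2|\nabla u|^2$, after which choosing $\lambda\simeq C(m,K,F,D)r^{-2}$ (absorbing $2K\phi^2|\nabla u|^2$ via $Kr^2\le KD^2$) closes the argument and gives $G\le\lambda(\mathrm{osc}\,u)^2$, hence the claimed estimate.
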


\begin{proof}
Without loss of generality, we assume that $u \ge 0$. We choose a cutoff
function $\psi$ on $\mathbb{R}$ such that $\psi =1$ on $(-\infty,r]$ and
$\psi=0$ on $[2r,\infty]$. Moreover, we assume that $|\psi'| \le C(n)r^{-1}$.
We set $\eta(x,t)=\psi(d(q,x))\psi(s-t)$.
Multiplying both sides of $\square_f u=0$ by $\eta^2 u$ and integrating by
parts, we obtain
\begin{align*}
\iint |\nabla (\eta u)|^2\ \dmu_f\text{d}t\ \le\ &\iint (|\nabla
\eta|^2+\eta_t^2/2)u^2\ \dmu_f\text{d}t-\left. \int u^2\eta^2/2\ \dmu_f
\right|_{t=s}
\\
\le\ &Cr^{-2} \iint_{B(q,r) \times [s-r^2,s]} u^2\ \dmu_f\text{d}t
\end{align*}
That is,
\begin{align*}
\iint_{B(q,r/2)\times [s-r^2/2,s]} |\nabla u|^2\ \dmu_f\text{d}t\ \le\
C\mu_f(B(q,r)) \underset{B(q,r) \times [s-r^2,s]}{\text{osc}}\,u^2.
\end{align*}

On the other hand, by computation,
\begin{align*}
\square_f |\na u|^2\ =\ -2|Hess_u|^2-2Rc_f(\na u,\na u)\ \le\ 2K|\na
u|^2
\end{align*}
or
\begin{align*}
\square_f\lc e^{-2Kt}|\na u|^2\rc\ \le\ 0.
\end{align*}

Now we apply the Moser iteration on $e^{-2Kt}|\na u|^2$, see \cite[Proposition
$2.7$]{WW16}, that
\begin{align*}
\underset{B(q,r/4) \times [s-r^2/4,s]}{|\nabla u|^2}\ \le\
\frac{C}{r^2\mu_f(B(q,r/2))}\iint_{B(q,r/2) \times [s-r^2/4,s]} |\nabla
u|^2\ \dmu_f\text{d}t.
\end{align*}

Therefore,
\begin{align*}
\underset{B(q,r/4) \times [s-r^2/4,s]}{|\nabla u|^2}\ \le\
\frac{C\mu_f(B(q,r))}{r^2\mu_f(B(q,r/2))} \underset{B(q,r) \times
[s-r^2,s]}{\text{osc}}\,u^2\ \le\ Cr^{-2} \underset{B(q,r) \times
[s-r^2,s]}{\text{osc}}\,u^2
\end{align*}
by the volume doubling.
\end{proof}

Now we prove the existence of a local cut-off function, see also \cite[Theorem
6.33]{ChCo96}.
\begin{thm}
\label{T201b}
Let $(M,p,g,\mu_f)$ be a space in $\mathcal N_m(F,K)$. For
any $D>0$ and $q$ with $d(p,q) \le D$, there exists a smooth cutoff function
$\phi$ which is supported in $B(q,r)$ and $\phi=1$ on $B(q,r/2)$ such that
\begin{align*} 
r|\nabla \phi|+r^2|\Delta_f \phi|\ \le\ C
\end{align*}
for some $C=C(m,K,F,D)>0$ and $r \le D/3$.
\end{thm}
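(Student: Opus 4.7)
The plan is to construct $\phi$ by parabolic regularization of a naive Lipschitz cutoff of the distance to $q$. Pick a smooth template $\chi:\mathbb{R}\to [0,1]$ with $\chi\equiv 1$ on $(-\infty, 5/8]$, $\chi\equiv 0$ on $[3/4,\infty)$ and $|\chi'|+|\chi''|\le C$, and set $\psi_0(x):=\chi(d(q,x)/r)$. Then $\psi_0$ is Lipschitz with $|\nabla \psi_0|\le C/r$, equals $1$ on $B(q,5r/8)$ and is supported in $B(q,3r/4)$. Fix a small parameter $\delta\in(0,1)$ to be chosen later, set $\tau:=\delta r^2$, and define $\tilde{\psi}(x) := \int_M H(x,y,\tau)\,\psi_0(y)\,\dmu_f(y)$, which is a bounded smooth solution of $\square_f u=0$ with initial data $\psi_0$ by Theorem~\ref{T201}.

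First I would show $\tilde\psi$ is already an approximate cutoff. For $x\in B(q,r/2)$, every $y$ with $\psi_0(y)<1$ satisfies $d(x,y)\ge r/8$, so Corollary~\ref{C201a} (together with a Gaussian tail estimate from the upper bound in Theorem~\ref{T201} for the part outside the range of the corollary) gives $1-\tilde\psi(x)\le C\delta$; symmetrically, $\tilde\psi(x)\le C\delta$ for $x\notin B(q,r)$. For the gradient, the parabolic Bochner identity~(\ref{E200a}) combined with $Rc_f\ge -K$ yields $\square_f(e^{-2Kt}|\nabla \tilde\psi|^2)\le 0$, and the maximum principle (valid by the stochastic completeness of $(M,g,\mu_f)$ noted before Theorem~\ref{T201}) then gives $|\nabla\tilde\psi(\cdot,\tau)|^2\le e^{2K\tau}\|\nabla\psi_0\|_\infty^2\le C/r^2$ since $\tau\le D^2$.

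The delicate step is the pointwise Laplacian bound. Since $v:=\Delta_f\tilde\psi$ itself solves $\square_f v=0$, the function $v^2$ is a $\square_f$-subsolution: $\square_f v^2=-2|\nabla v|^2\le 0$. Multiplying $\square_f\tilde\psi=0$ by $\partial_t\tilde\psi$ and integrating by parts in space and time produces the energy estimate
\[
\iint_{M\times [0,\tau]}v^2\,\dmu_f\,dt \;\le\; \tfrac{1}{2}\int_M|\nabla\psi_0|^2\,\dmu_f\;\le\; \frac{C\,\mu_f(B(q,r))}{r^2}.
\]
Applying Moser iteration on subsolutions (exactly as in the proof of Lemma~\ref{L200}, based on volume doubling and the Sobolev/Poincar\'e inequalities of Propositions~\ref{T:poincare} and \ref{T:Sobolev}) to $v^2$ on parabolic cylinders of size $\sqrt\tau$ centered at points of $B(q,r/2)\times\{\tau\}$, and using the doubling inequality to compare $\mu_f(B(q,r))$ with $\mu_f(B(q',\sqrt\tau))$, yields the pointwise bound $|\Delta_f\tilde\psi|\le C/r^2$, where $C$ depends on the (fixed) $\delta$ as well as on $m,K,F,D$.

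Finally, fix $\delta$ small enough that $C\delta<1/4$ in the closeness step, and choose a smooth $\eta:[0,1]\to[0,1]$ with $\eta\equiv 0$ on $[0,1/4]$, $\eta\equiv 1$ on $[3/4,1]$ and $|\eta'|+|\eta''|$ bounded. Set $\phi:=\eta\circ\tilde\psi$; then $\phi\equiv 1$ on $B(q,r/2)$ and $\phi\equiv 0$ off $B(q,r)$, while the chain rule $\Delta_f\phi=\eta''(\tilde\psi)|\nabla\tilde\psi|^2+\eta'(\tilde\psi)\Delta_f\tilde\psi$ combines the two previous estimates into $r|\nabla\phi|+r^2|\Delta_f\phi|\le C$. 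The main obstacle is extracting the pointwise Laplacian bound from the $L^2$ energy estimate via Moser iteration at the natural parabolic scale $\sqrt\tau$, and verifying that all relevant cylinders sit in a region where the uniform doubling, Poincar\'e, and Sobolev constants of Section~2.1 are valid --- this is where the restriction $r\le D/3$ with $d(p,q)\le D$ is used.
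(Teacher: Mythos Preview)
Your proof is correct but takes a genuinely different route from the paper's. The paper does not evolve a Lipschitz cutoff at all; it works directly with the heat kernel $u(x,t)=H(x,q^*,t)$ and uses a \emph{time-averaging} trick: setting $\psi(x)=\tfrac{C_0}{a\rho^2}\int u(x,t)\,\eta(t)\,dt$ for a bump $\eta$ in $t$, the relation $\Delta_f u=\partial_t u$ permits an integration by parts in time, giving $\Delta_f\psi=-\tfrac{C_0}{a\rho^2}\int u\,\eta'\,dt$, which is bounded pointwise straight from the two-sided heat-kernel estimate~\eqref{E201}. No energy identity and no Moser iteration on the Laplacian are required; the price is that the cutoff so produced lives on balls of a fixed ratio $C_1^{-1}$, and a covering argument finishes the construction. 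Your approach trades the covering for a heavier analytic step --- the $L^2$ space-time energy identity for $\partial_t\tilde\psi=\Delta_f\tilde\psi$ followed by Moser iteration on $(\Delta_f\tilde\psi)^2$ --- but has the advantage that $\tilde\psi$ is already built at the target scale, so no covering is needed.

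Two minor technical remarks. First, the global maximum-principle argument for $|\nabla\tilde\psi|$ on a noncompact manifold is slightly delicate (one must first know the subsolution is bounded); it is cleaner either to invoke the Bakry--\'Emery commutation $|\nabla P_\tau u|\le e^{K\tau}P_\tau|\nabla u|$, valid since $(M,g,\mu_f)$ is $CD(-K,\infty)$, or simply to use the local Cheng--Yau estimate of Lemma~\ref{L200}, which suffices because you only need $|\nabla\tilde\psi|$ on the annulus where $\eta'(\tilde\psi)\neq 0$. Second, since $\psi_0$ is merely Lipschitz, $\Delta_f\tilde\psi$ is not defined at $t=0$; place the Moser cylinders strictly inside $(0,\tau]$, e.g.\ over $[\tau/2,\tau]$, which the global energy bound on $[0,\tau]$ still controls.
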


\begin{proof}
Consider the function $u(x,t)=H(x,q^*,t)$, then it follows from \eqref{E201}
that there exist $C_0=C_0(m,K,F(2D),D)>1$ and $C_1=C_1(m,K,F(2D),D)>1$ such
that
\begin{align*}
\begin{cases}
u \ge C_0^{-1}a &\quad \text{on}\, B(q^*,\rho/2) \times [\rho^2/3, \rho^2],\\
u \le C_0^{-1}a/2 &\quad \text{on}\,  B(q^*,2C_1\rho) \backslash B(q^*,C_1\rho)
\times [\rho^2/3, \rho^2],\\
u \le Ca &\quad \text{on}\,  B(q^*,2C_1\rho)  \times [\rho^2/4, \rho^2],\\
\end{cases}
\end{align*}
 where $a=v^{-1}_f(B(q^*,\rho))$l. Here we require that all sets considered are
 contained in $B(q,D/6)$.

 Now it follows from Lemma \ref{L200} that 
\begin{align*} 
|\na u|\ \le\ C{\rho}^{-1}a
\end{align*}
on $B(q^*,C_1r) \times [\rho^2/3,\rho^2]$.

Now we take a nonnegative cut-off function $\eta$ supported in
$[\rho^2/2,2{\rho}^2/3]$ such that $|\eta'| \le C{\rho}^{-2}$ and
\begin{align*} 
\int_{\rho^2/2}^{2{\rho}^2/3} \eta\ \ge\ C^{-1}\rho^2.
\end{align*}

Then we define a function
\begin{align*} 
\psi(x)\ =\ \frac{C_0}{a{\rho}^2} \int_{\rho^2/2}^{2r^2/3} u(x,t) \eta(t)\
\text{d}t.
\end{align*}

Then we have 
\begin{align*}
\begin{cases}
\psi&\ge\ 1 \quad \text{on}\ B(q^*,\rho/2),\\
\psi&\le\ 1/2 \quad \text{on}\  B(q^*,2C_1\rho) \backslash B(q^*,C_1\rho),\\
|\na \psi|&\le\ C\rho^{-1} \quad \text{on}\  B(q^*,C_1\rho).\\
\end{cases}
\end{align*}
Moreover,
\begin{align*} 
\Delta_f\psi(x)\ =\ &\frac{C_0}{a\rho^2} \int_{\rho^2/2}^{2{\rho}^2/3} \Delta_f
u(x,t) \eta(t)\text{d}t \\
=\ &\frac{C_0}{a\rho^2} \int_{\rho^2/2}^{2\rho^2/3} \pt u(x,t) \eta(t)\text{d}t
\\
=\ &-\frac{C_0}{a\rho^2} \int_{\rho^2/2}^{2\rho^2/3}  u(x,t)
\pt\eta(t)\text{d}t,
\end{align*}
therefore on $B(q^*,C_1r)$,
\begin{align*} 
\rho^2|\Delta_f\psi|\ \le\ C.
\end{align*}
Now we construct a smooth nondecreasing function $F(t)=0$ if $t\le 1/2$ and
$F(t)=1$ if $t \ge 1$. Then by considering the composite function $F(\psi(x))$
we have proved that for any $B(q^*,r) \subset B(q,D)$, there exists a cutoff
function $\phi^*$ supported in $B(q^*,r)$ such that $\phi_*=1$ on
$B(q^*,C_1^{-1}r)$ and
 \begin{align*} 
r|\na \phi_*|+r^2|\Delta_f\phi_*|\ \le\ C.
\end{align*}

The rest proof is a standard covering argument. By the local volume doubling,
there exists an integer $N=N(m,K,F(2D))>1$ such that we can find
$q_1,q_2,\cdots,q_N \in M$ such that
 \begin{align*} 
B(q,1/2) \subset \bigcup_{i=1}^N B(q_i,C_1^{-1}r).
\end{align*}
Then the function $\phi \coloneqq F(\sum_{i=1}^N \phi_i)$ will satisfy all
conditions.

Now the theorem follows from a standard argument by \eqref{eqn:
volume_doubling}.
\end{proof}

\begin{remark}
In \cite[Lemma 1.5]{WZ17}, the same conclusion is proven using Green's function.
\end{remark}

Now we need the following space-time control of the Hessian term of a heat
equation solution:
\begin{lem}
\label{L200a}
With the same assumptions as in Lemma \ref{L200}, there exists
$C=C(m,K,F,D)>0$ such that
\begin{align*} 
\int_{s-r^2/4}^s \aint_{B(q,r/4)} |Hess_u|^2\ \dmu_f\text{d}t\ \le\
Cr^{-2}\lc\underset{B(q,r) \times [s-r^2,s]}{\text{osc}}\,u \rc^2.
\end{align*}
\end{lem}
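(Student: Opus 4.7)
The plan is to run a standard Bochner-type integration-by-parts argument, now in the weighted setting and using the parabolic version of the Bochner formula (\ref{E200a}). Rewriting that identity as
\begin{align*}
2|Hess_u|^2\ =\ \Delta_f|\nabla u|^2-\partial_t|\nabla u|^2-2Rc_f(\nabla u,\nabla u),
\end{align*}
I would pair it with a space-time cutoff $\eta(x,t)\coloneqq\phi(x)^2\psi(t)$. Here $\phi$ is the spatial cutoff produced by Theorem \ref{T201b}, supported in $B(q,r/2)$, identically $1$ on $B(q,r/4)$, and satisfying $r|\nabla\phi|+r^2|\Delta_f\phi|\le C$; and $\psi$ is any smooth temporal cutoff, supported in $(s-r^2/2,s]$, identically $1$ on $[s-r^2/4,s]$, with $|\psi'|\le Cr^{-2}$. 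Integrating the displayed identity against $\phi^2\psi\,\dmu_f\,\text{d}t$ over $B(q,r/2)\times[s-r^2/2,s]$ should give the bound after the three terms on the right are handled separately.

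For the $\Delta_f$-term, the key point is that $\Delta_f$ is self-adjoint with respect to $\dmu_f$, so two integrations by parts (with no boundary contribution, since $\phi$ is compactly supported in $B(q,r/2)$) move the operator onto the cutoff:
\begin{align*}
\iint\phi^2\psi\,\Delta_f|\nabla u|^2\,\dmu_f\text{d}t\ =\ \iint\psi\,\Delta_f(\phi^2)\,|\nabla u|^2\,\dmu_f\text{d}t,
\end{align*}
and $|\Delta_f(\phi^2)|=|2\phi\Delta_f\phi+2|\nabla\phi|^2|\le Cr^{-2}$. For the $\partial_t$-term, I integrate by parts in time: the boundary contribution at $t=s-r^2/2$ vanishes since $\psi$ does, and at $t=s$ it equals $-\int\phi^2|\nabla u|^2\,\dmu_f\le 0$, which we simply drop using the inequality, leaving $\iint\phi^2\psi'|\nabla u|^2\,\dmu_f\text{d}t$, bounded by $Cr^{-2}\iint\phi^2|\nabla u|^2\,\dmu_f\text{d}t$. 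For the Ricci term, the assumption $Rc_f\ge -Kg$ immediately gives $-2Rc_f(\nabla u,\nabla u)\le 2K|\nabla u|^2$.

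Collecting everything, and using $r\le D$ to absorb $K$ into $Cr^{-2}$, I arrive at
\begin{align*}
2\iint\phi^2\psi\,|Hess_u|^2\,\dmu_f\text{d}t\ \le\ Cr^{-2}\iint_{B(q,r/2)\times[s-r^2/2,s]}|\nabla u|^2\,\dmu_f\text{d}t.
\end{align*}
The right-hand integrand is now controlled pointwise by Lemma \ref{L200} applied on the cylinder $B(q,r/2)\times[s-r^2/2,s]\subset B(q,r)\times[s-r^2,s]$, giving $|\nabla u|^2\le Cr^{-2}(\text{osc}\,u)^2$. Integrating over a space-time region of measure $\tfrac{r^2}{2}\mu_f(B(q,r/2))$, dropping $\phi^2\psi\le 1$ on the left restricts the domain to $B(q,r/4)\times[s-r^2/4,s]$, and finally the volume doubling (\ref{eqn: volume_doubling}) converts $\mu_f(B(q,r/2))$ to $\mu_f(B(q,r/4))$, at which point dividing by $\mu_f(B(q,r/4))$ produces the desired averaged estimate.

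The main obstacle, and really the only non-routine ingredient, is already behind us: it is the existence of a spatial cutoff with uniform bounds on both $|\nabla\phi|$ and $|\Delta_f\phi|$ in this non-Ricci-bounded-below weighted setting, furnished by Theorem \ref{T201b}. Everything else—the parabolic Bochner identity, self-adjointness of $\Delta_f$, the time integration by parts with favorable sign at $t=s$, and the Cheng--Yau gradient bound plus doubling at the very end—is standard once that cutoff is in hand.
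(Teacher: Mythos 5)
Your proposal is correct and follows essentially the same route as the paper: multiply the parabolic Bochner inequality \eqref{E200a} by a space-time cutoff built from Theorem \ref{T201b}, integrate by parts to put $(\partial_t+\Delta_f)$ on the cutoff, discard the favorably-signed boundary term at $t=s$, bound the Ricci term by $2K|\nabla u|^2$, and finish with the Cheng--Yau estimate of Lemma \ref{L200} together with volume doubling. The only differences (using $\phi^2$ instead of $\phi$ and slightly different radii for the cutoff) are cosmetic.
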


\begin{proof}
We have as before
\begin{align}\label{E202}
\square_f |\na u|^2\ =\ -2|Hess_u|^2-2Rc_f(\na u,\na u)\ \le\ 
-2|Hess_u|^2+2K|\na u|^2.
\end{align}

We choose a nonincreasing cutoff function $\eta$ on $\mathbb{R}$ such that
$\eta(x)=1$ if $x \le 1/2$ and $\eta(x)=0$ if $x \ge 1$. Let $\phi$ be a cutoff
function constructed in the last theorem such that $\phi=1$ on $B(q,r/2)$ and
is supported in $B(q,r)$. We also set $\psi(x,t) \coloneqq \phi(x)\eta(-tr^2)$.
By multiplying both sides of \eqref{E202} by $\psi$ and integrating, we have
\begin{align*}
\iint 2|Hess_u|^2 \psi\ \dmu_f\text{d}t\ \le\ &\iint -\psi \square_f
|\na u|^2+2K|\na u|^2\psi\ \dmu_f\text{d}t \\
=\ &\iint (\pt+\df) \psi|\na u|^2+2K|\na u|^2\psi\ \dmu_f\text{d}t  -\left.
\int |\na u|^2 \psi\ \dmu_f \right|_{t=s} \\
\le\ &Cr^{-2} \iint_{B(q,r/2)\times [s-r^2/2,s]} |\na u|^2\ \dmu_f\text{d}t.  
\end{align*}

Then the conclusion follows from Lemma \ref{L200} and the local volume
doubling.
\end{proof}

For a closed set $X \subset M$ and $0<r_0<r_1$, the annulus $A_{r_0,r_1}(X)$ is
defined as $T_{r_1} \backslash T_{r_0}$, where $T_r(X)$ is the $r-$tubular
neighborhood of $X$. Then by using Theorem \ref{T201a} and a similar argument
in \cite[Lemma $2.6$]{CN12} we have

\begin{cor}
\label{C201}
Let $(M,p,g,\mu_f)$ be a space in $\mathcal N_m(F,K)$. For
any $D>0$, $0<10r_0<r_1<D/10$ and a compact set $X \subset B(q,D/10)$ with
$d(p,q) \le D$, there exists a smooth nonnegative cutoff function $\phi$ such
that for some constant $C=C(m,K,F,D)>0$,
\begin{enumerate}
\item $\phi=1$ on $A_{3r_0,r_1/3}(X)$ and $\phi=0$ on $M \backslash
A_{2r_0,r_1/2}(X)$.
\item $r_0|\na \phi|+r_0^2|\df \phi|\ \le\ C$ on $A_{2r_0,3r_0}(X)$.
\item $r_1|\na \phi|+r_1^2|\df \phi|\ \le\ C$ on $A_{r_1/3,r_1/2}(X)$.
\end{enumerate}
\end{cor}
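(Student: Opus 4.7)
The plan is to construct $\phi$ as a product $\phi := (1-\psi_0)\psi_1$ of two single-scale cutoffs adapted to the compact set $X$: one built at the inner scale $r_0$, and one at the outer scale $r_1$. Specifically, I will build $\psi_0$ satisfying $\psi_0 \equiv 1$ on $T_{2r_0}(X)$, $\psi_0 \equiv 0$ outside $T_{3r_0}(X)$, and $r_0|\nabla\psi_0| + r_0^2|\Delta_f\psi_0| \le C$; and similarly $\psi_1$ with $\psi_1 \equiv 1$ on $T_{r_1/3}(X)$, $\psi_1 \equiv 0$ outside $T_{r_1/2}(X)$, and $r_1|\nabla\psi_1| + r_1^2|\Delta_f\psi_1| \le C$. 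Each of these is obtained by the construction in Theorem \ref{T201b}: cover $X$ by finitely many balls at the appropriate scale, apply the ball-scale cutoff on each, and paste via a smooth monotone function $F$ equal to $0$ on $(-\infty,1/2]$ and $1$ on $[1,\infty)$. The uniform local volume doubling (\ref{eqn: volume_doubling}) controls the multiplicity of such a cover purely in terms of $m, K, F, D$, so the $F$-composition retains the desired gradient and $\Delta_f$ bounds at the corresponding scale.

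With $\psi_0$ and $\psi_1$ in hand, the hypothesis $10 r_0 < r_1$ gives $3r_0 < r_1/3$, so the support of $\nabla\psi_0$, namely $A_{2r_0,3r_0}(X)$, lies well inside the region where $\psi_1 \equiv 1$, and the support of $\nabla\psi_1$, namely $A_{r_1/3,r_1/2}(X)$, lies in the region where $\psi_0 \equiv 0$. On $A_{3r_0,r_1/3}(X)$ one has $\psi_0 \equiv 0$ and $\psi_1 \equiv 1$, hence $\phi \equiv 1$; outside $A_{2r_0, r_1/2}(X)$ either $\psi_0 \equiv 1$ (so $1-\psi_0 \equiv 0$) or $\psi_1 \equiv 0$, yielding $\phi \equiv 0$. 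This establishes (1). On the inner annulus $A_{2r_0,3r_0}(X)$, $\psi_1$ is locally constant, so $\nabla\phi = -\nabla\psi_0$ and $\Delta_f\phi = -\Delta_f\psi_0$, producing (2). Symmetrically, on the outer annulus $A_{r_1/3,r_1/2}(X)$, $\psi_0$ vanishes, so $\nabla\phi = \nabla\psi_1$ and $\Delta_f\phi = \Delta_f\psi_1$, giving (3).

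The main technical point is ensuring that the covering construction of $\psi_0$ and $\psi_1$ yields estimates independent of $X$. Theorem \ref{T201b} is stated for a single ball, and although its final step already contains a finite-covering argument, we must repeat it over a general compact $X \subset B(q,D/10)$ at two different scales. The uniform doubling constant $C_2(m,F,K,D)$ bounds the multiplicity of the refined cover of $X$ by balls of radius $r_0$ (respectively $r_1$) by a constant depending only on $m, F, K, D$, and the Harnack inequality (Theorem \ref{T201a}) enters through the proof of Theorem \ref{T201b} to guarantee that the heat-kernel-based building blocks transition from $\approx 1$ to $\approx 1/2$ over an annulus of controlled thickness, uniformly in the center. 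Once this uniform control is secured, the chain-rule computation $\Delta_f F(u) = F'(u)\Delta_f u + F''(u)|\nabla u|^2$ preserves the scale-invariant bounds, since $F$, $F'$, $F''$ are absolute constants; this is where one must be slightly careful that the $|\nabla u|^2$ term, though not present in a single-ball cutoff's Laplacian bound, enjoys the correct scaling $r_0^{-2}$ (respectively $r_1^{-2}$) when summed over the finite cover.
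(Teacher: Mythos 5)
Your construction is correct, and it is worth noting that the paper does not write out a proof at all: it simply invokes the Harnack inequality (Theorem \ref{T201a}) and says ``a similar argument to \cite[Lemma 2.6]{CN12}'', i.e.\ the intended route is to redo the heat-kernel smoothing argument of Theorem \ref{T201b} directly on the tubular neighborhoods $T_{2r_0}(X)$ and $T_{r_1/3}(X)$ (flow an indicator-type function, use the kernel bounds and Harnack to pin down the level sets, then compose and combine the two scales). You instead black-box Theorem \ref{T201b}: you build each single-scale cutoff $\psi_0,\psi_1$ by a bounded-multiplicity cover with the already-constructed ball cutoffs, paste with the fixed function $F$, and take $\phi=(1-\psi_0)\psi_1$. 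This buys you a shorter argument that only needs the doubling-controlled multiplicity bound and the chain rule (your remark that the $F''(u)|\nabla u|^2$ term still scales like $r^{-2}$ is exactly the point to check), at the cost of being slightly less self-contained about two small details which you should make explicit: the inner balls of the cover must cover the \emph{whole} tubular neighborhood $T_{2r_0}(X)$ (resp.\ $T_{r_1/3}(X)$), not just $X$ --- harmless, since the ratio between the ``$\equiv 1$'' radius and the support radius in Theorem \ref{T201b} is a fixed constant, so centering a fine net on $X$ at scale $\sim r_0$ (resp.\ $\sim r_1$) works --- and in $\Delta_f\phi=(1-\psi_0)\Delta_f\psi_1-\psi_1\Delta_f\psi_0-2\langle\nabla\psi_0,\nabla\psi_1\rangle$ the cross term vanishes because $10r_0<r_1$ forces $3r_0<r_1/3$, so the gradient supports $A_{2r_0,3r_0}(X)$ and $A_{r_1/3,r_1/2}(X)$ are disjoint; with these spelled out, your two-scale product argument gives exactly the three stated properties.
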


Next we prove the following mean value inequality which is similar to the Lemma
$2.1$ of \cite{CN12}.
\begin{lem}
\label{L202}
Let $(M,p,g,\mu_f)$ be a space in $\mathcal N_m(F,K)$. For
any $D>0$ and $q$ with $d(p,q) \le D$, there
exists a constant $C=C(m,K,F,D)>1$ such that the following holds. If
$u_t=u(x,t)$ is nonnegative continuous function on $M \times [0,r^2]$ with
compact support on each time slice in $B(q,D/5)$, $r \le D/10$ and $\square_f u
\ge -c_0$ in the distribution sense, then
 \begin{align*} 
\aint_{B(x,r)} u_0\ \dmu_f\ \le\ C (u_{r^2}(x)+c_0r^2).
\end{align*}
\end{lem}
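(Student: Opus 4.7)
The strategy is the classical heat kernel representation, adapted to the weighted setting. The main ingredients are already prepared in the excerpt: the two-sided $f$-heat kernel bounds of Theorem \ref{T201}, the self-adjointness of $\Delta_f$ with respect to $\mu_f$, and the stochastic completeness of $(M,g,\mu_f)$ (established in the proof of Theorem \ref{T201}), which gives $\int_M H(x,y,t)\,\dmu_f(y)=1$ for all $x\in M$ and $t>0$.

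First, I would introduce the auxiliary function
\begin{align*}
\phi(t)\ :=\ \int_M H(x,y,r^2-t)\,u(y,t)\,\dmu_f(y),\qquad t\in[0,r^2),
\end{align*}
and compute its time derivative. Since $H$ satisfies $\partial_s H(x,y,s)=\Delta_{f,y}H(x,y,s)$ in the second variable, and using the self-adjointness of $\Delta_f$ with respect to $\dmu_f$, one obtains
\begin{align*}
\phi'(t)\ =\ \int_M H(x,y,r^2-t)\,\square_f u(y,t)\,\dmu_f(y)\ \ge\ -c_0\int_M H(x,y,r^2-t)\,\dmu_f(y)\ =\ -c_0,
\end{align*}
where the hypothesis $\square_f u\ge -c_0$ is used distributionally. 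Integrating from $0$ to $r^2$ and using the continuity of $u$ together with the fundamental solution property $\lim_{t\to r^2}\phi(t)=u_{r^2}(x)$, this yields the $L^1$-type bound
\begin{align*}
\int_M H(x,y,r^2)\,u_0(y)\,\dmu_f(y)\ \le\ u_{r^2}(x)+c_0r^2.
\end{align*}

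Second, under the assumption that $x\in B(q,D/5)$ (otherwise $B(x,r)$ is disjoint from the support of $u_0$ and the conclusion is vacuous), we have $B(x,r)\subset B(q,3D/10)\subset B(q,D/3)$ and $r^2\le D^2/100$, so the lower bound in Theorem \ref{T201} applies: for all $y\in B(x,r)$,
\begin{align*}
H(x,y,r^2)\ \ge\ \frac{C^{-1}e^{-1}}{\mu_f(B(x,r))}.
\end{align*}
Restricting the previous integral to $B(x,r)$ and dividing through by $\mu_f(B(x,r))$ gives the desired estimate with a modified constant depending only on $m$, $K$, $F$, $D$.

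The one point requiring mild care is the distributional justification of the identity $\phi'(t)=\int H\cdot\square_f u\,\dmu_f$. Because $u(\cdot,t)$ is compactly supported in $B(q,D/5)$ on each time slice and $H$ is smooth, one can test $\square_f u\ge -c_0$ against a smooth spatial cutoff of $H$ (the existence of such cutoffs is guaranteed by Theorem \ref{T201b}) and then pass to the limit, so no genuine obstacle arises from the distributional hypothesis. The only global input truly needed beyond the standard Colding--Naber argument is the weighted-measure versions of self-adjointness and stochastic completeness, both already in hand for the class $\mathcal N_m(F,K)$.
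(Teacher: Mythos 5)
Your proposal is correct and follows essentially the same route as the paper: differentiate $t\mapsto\int H(x,y,r^2-t)u(y,t)\,\dmu_f(y)$, use self-adjointness of $\Delta_f$ and stochastic completeness to get the lower bound $-c_0$ on the derivative, integrate up to $t=r^2$, and conclude with the heat kernel lower bound of Theorem \ref{T201} on $B(x,r)$. Your added remarks on the distributional justification and on the location of $x$ are harmless refinements (only the constant in the kernel lower bound should read $e^{-C}$ rather than $e^{-1}$, which is absorbed into the final constant).
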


\begin{proof}
We fix $x$ and $r$, then for any $t \in [0,r^2)$, we have
 \begin{align}\label{E205} 
&\partial_t\int  H(x,y,r^2-t)u(y,t)\ \dmu_f(y)  \notag\\
=\ &\int \partial_tHu+H\partial_t u\ \dmu_f \notag\\
=\ &\int -\df Hu+H\df u+H\square_f u\ \dmu_f\notag\\
=\ &\int H\square_f u\ \dmu_f  \notag \\
\ge\ &-c_0\int H\ \dmu_f\ =\ -c_0.
\end{align}

As $t \to r^2$, it follows from the definition of $H$ that
 \begin{align*} 
\lim_{t \nearrow r^2}\int  H(x,y,r^2-t)u(y,t)\ \dmu_f(y)\ =\ u(x,r^2).
\end{align*}

From the heat kernel lower bound \eqref{E201}, we have
 \begin{align*} 
\int  H(x,y,r^2)u_0(y)\ \dmu_f(y)\ \ge\ C \aint_{B(x,r)}u_0(y)\ \text{d}
\mu_f(y).
\end{align*}

The proof is complete if we integrate \eqref{E205} from $0$ to $r^2$.
\end{proof}

In particular, if $u$ is independent of $t$, we have

\begin{cor}
\label{C202}
Let $(M,p,g,\mu_f)$ be a space in $\mathcal N_m(F,K)$. For any $D>0$ and $q\in
B(p,D)$, there exists a constant $C=C(m,K,F,D)>1$ such that the following
holds. If $u(x)$ is nonnegative continuous function on $M$ with compact support
in $B(q,D/5)$, $r \le D$ and $\df u \le c_0$, then
 \begin{align*} 
\aint_{B(x,r)} u\ \dmu_f\ \le\ C (u(x)+c_0r^2).
\end{align*}
\end{cor}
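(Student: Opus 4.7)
The plan is to deduce Corollary \ref{C202} as a direct specialization of the parabolic mean value inequality Lemma \ref{L202}, by viewing the time-independent function $u$ as a (degenerate) solution to a heat-type differential inequality.

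First I would set $\tilde u(x,t):=u(x)$ for $(x,t)\in M\times [0,r^2]$. Since $u$ is nonnegative and continuous on $M$ with compact support in $B(q,D/5)$, the same is true for $\tilde u$ on each time slice. Because $\tilde u$ is constant in $t$, $\partial_t \tilde u \equiv 0$, and therefore
\begin{align*}
\square_f \tilde u\ =\ \partial_t \tilde u - \Delta_f \tilde u\ =\ -\Delta_f u\ \ge\ -c_0,
\end{align*}
so the hypothesis $\Delta_f u \le c_0$ translates exactly into $\square_f \tilde u\ge -c_0$ in the distributional sense required by Lemma \ref{L202}.

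Next, since $r\le D$ implies $r \le D$ (the hypothesis of Lemma \ref{L202} being $r\le D/10$, one may either restrict to $r \le D/10$ or rescale $D$ at the outset; in either case the conclusion holds with a constant depending only on $m,K,F,D$), I would apply Lemma \ref{L202} to $\tilde u$ at time $0$ and time $r^2$. Its conclusion reads
\begin{align*}
\aint_{B(x,r)} \tilde u_0\ \dmu_f\ \le\ C\bigl(\tilde u_{r^2}(x)+c_0 r^2\bigr),
\end{align*}
and since $\tilde u_0\equiv u$ and $\tilde u_{r^2}(x)=u(x)$, this is precisely the claimed estimate
\begin{align*}
\aint_{B(x,r)} u\ \dmu_f\ \le\ C\bigl(u(x)+c_0 r^2\bigr).
\end{align*}

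There is no essential obstacle here: the entire work is done in the proof of the parabolic Lemma \ref{L202} (which uses the $f$-heat kernel lower bound from Theorem \ref{T201} to compare $u_0$ with its value near $x$ at a later time). The only minor point of care is to ensure that the support condition, the distance condition $d(p,q)\le D$, and the scale condition on $r$ match those of Lemma \ref{L202}; these are met by the hypotheses of the corollary, up to a harmless renaming of $D$ in the constant $C=C(m,K,F,D)$.
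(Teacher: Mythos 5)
Your proposal is correct and is exactly how the paper intends the corollary to be read: the paper simply states it as the special case of Lemma \ref{L202} when $u$ is independent of $t$, so that $\square_f u=-\Delta_f u\ge -c_0$ and $u_0=u_{r^2}=u$. Your remark about reconciling $r\le D$ with the lemma's $r\le D/10$ by absorbing the rescaling into the constant $C(m,K,F,D)$ is the right (and only) point of care.
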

\noindent 

\subsection{Smoothing the distance function}
In this subsection, we fix a manifold $(M^m,p,g,f)\in \mathcal
N_m(F,K)$ and two points $q',q$ in $B(p,D/2)$ with $d(q',q)=d$. Recall that the
excess function of $q'$ and $q$ is defined as
 \begin{align*} 
e(x)\ :=\ d(x,q')+d(x,q)-d(q',q).
\end{align*}
We also set $d^-(x)=d(q',x)$ and $d^+(x)=d-d(q,x)$.

Now we have
\begin{thm}
\label{T301}
Assume that $d \le 1$ and a constant $0<\epsilon<1$, if $x \in A_{\ep
d,2d}(\{q',q\})$ satisfies $e(x) \le r^2d \le \bar r^2(m,\ep)d$, then
\begin{align*} 
\aint_{B(x,rd)} e\ \dmu_f\ \le\ C_{Ex}r^2d
\end{align*}
for some $C_{Ex}=C_{Ex}(m,K,F(2D))>0$.
\end{thm}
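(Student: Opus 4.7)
My strategy is to combine the $f$--Laplacian comparison of distance functions (Lemma \ref{T102}) with the mean value inequality for nonnegative $f$-subsolutions (Corollary \ref{C202}), together with a cut-off built from Theorem \ref{T201b}. The estimate $e(x)\le r^2 d$ is the crucial smallness hypothesis that pins $e$ down at the centre, and the mean value inequality then propagates this smallness to the $L^1$-average.

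\paragraph{Step 1: pointwise (barrier) bound on $\Delta_f e$.} Write $e(y)=d(q',y)+d(q,y)-d$. Lemma \ref{T102} applied in turn to $d(q',\cdot)$ and $d(q,\cdot)$ gives, in the barrier sense on $B(x,rd)$,
\[
\Delta_f e(y)\ \le\ \frac{m-1}{d(q',y)}+\frac{m-1}{d(q,y)}+2\bigl((m-1)\sqrt{K}+F(2D)\bigr).
\]
Because $x\in A_{\epsilon d,2d}(\{q',q\})$, the triangle inequality yields $d(q',y),d(q,y)\ge \epsilon d-rd\ge \epsilon d/2$ for every $y\in B(x,rd)$, provided one imposes the first restriction $\bar r(m,\epsilon)\le \epsilon/2$. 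Hence
\[
\Delta_f e\ \le\ c_0\ :=\ \frac{4(m-1)}{\epsilon d}+2(m-1)\sqrt{K}+2F(2D)
\]
throughout $B(x,rd)$.

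\paragraph{Step 2: compactly supported truncation and mean value inequality.} Corollary \ref{C202} requires a compactly supported subsolution, so I use Theorem \ref{T201b} to produce a smooth $\phi$ supported in $B(x,2rd)$ with $\phi\equiv 1$ on $B(x,rd)$ and $rd\,|\nabla\phi|+(rd)^2|\Delta_f\phi|\le C$. Set $u:=\phi e\ge 0$, which is compactly supported in the appropriate large ball around $p$ as long as $\bar r$ is taken smaller than a fixed multiple of $D$. Since $e$ is $2$-Lipschitz and $e(x)\le r^2 d\le rd$, one has $e\le 5rd$ on $\mathrm{supp}\,\phi$. Expanding $\Delta_f u=\phi\Delta_f e+2\langle\nabla\phi,\nabla e\rangle+e\Delta_f\phi$ and inserting these estimates gives $\Delta_f u\le c_0+C/(rd)$ in the barrier sense on $B(x,2rd)$. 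Applying Corollary \ref{C202} to $u$ with radius $rd$ yields
\[
\aint_{B(x,rd)} e\,\dmu_f\ =\ \aint_{B(x,rd)} u\,\dmu_f\ \le\ C\bigl(e(x)+c_0 (rd)^2+Crd\bigr).
\]
The first two terms are manageable: $Ce(x)\le Cr^2 d$, and $Cc_0 (rd)^2=Cr^2 d\cdot(d/\epsilon+d)$ is absorbed into $Cr^2 d$ by a second restriction $\bar r(m,\epsilon)^2(d/\epsilon+d)\le 1$, which is where the $\epsilon$-dependence of $\bar r$ enters.

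\paragraph{Main obstacle.} The unwelcome residual term is the $Crd$ contribution coming from $\langle\nabla\phi,\nabla e\rangle$: it is of the wrong order $O(rd)$ rather than $O(r^2 d)$. Eliminating it is the heart of the argument and has to exploit that $e$ is nearly affine near $x$: since $e(x)$ is already of order $r^2d$ and the geodesics from $q'$ and $q$ through a small neighbourhood of $x$ are almost antipodal, the $L^2$-average of $|\nabla e|$ over $B(x,rd)$ should actually be of order $\sqrt{r}$, not $O(1)$. I would quantify this by applying the Bochner identity \eqref{E200a} to a brief parabolic regularization $e_t:=\mathsf{H}_t e$ of $e$ (using the heat kernel estimates of Theorem \ref{T201} and the Hessian bound of Lemma \ref{L200a}), combined with an integration by parts $\int\phi|\nabla e|^2= -\int \phi e\,\Delta_f e-\int e\langle\nabla\phi,\nabla e\rangle$; the cross term is then re-absorbed by Cauchy--Schwarz, leaving an integral gradient bound of the desired size. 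Substituting this improved gradient bound into the cut-off computation converts the $Crd$ term into $O(r^2 d)$, and completes the proof with $C_{Ex}=C_{Ex}(m,K,F(2D))$ after absorbing all remaining $\epsilon$-dependent factors into $\bar r(m,\epsilon)$.
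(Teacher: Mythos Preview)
Your Steps 1 and 2 are fine in spirit, but the cutoff you chose is at the wrong scale, and this is precisely what creates your ``main obstacle''. You localize with a $\phi$ supported in $B(x,2rd)$, so $|\nabla\phi|\sim (rd)^{-1}$; then $2\langle\nabla\phi,\nabla e\rangle$ contributes $C/(rd)$ to $\Delta_f u$, and after multiplying by $(rd)^2$ in Corollary \ref{C202} you are stuck with an $O(rd)$ term. Your proposed cure --- proving an improved $L^2$ bound on $|\nabla e|$ via parabolic smoothing and Bochner --- is essentially the content of Lemma \ref{L303}, which is \emph{derived from} Theorem \ref{T301}, not the other way around; so the argument as written is circular, or at best a substantial detour that would require an independent proof.

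The paper avoids this entirely by cutting off at scale $d$ rather than $rd$. It invokes Corollary \ref{C201} with $X=\{q',q\}$ and radii comparable to $\epsilon d$ and $d$, producing a cutoff $\phi$ equal to $1$ on the full annulus $A_{\epsilon d,2d}(\{q',q\})\supset B(x,rd)$, with $|\nabla\phi|\le C/d$ and $|\Delta_f\phi|\le C/d^2$. On the support of $\phi$ one has $e\le Cd$ and $|\nabla e|\le 2$, so
\[
\Delta_f(\phi e)=e\,\Delta_f\phi+\phi\,\Delta_f e+2\langle\nabla\phi,\nabla e\rangle\le \frac{C}{d^2}\cdot Cd+\frac{C}{\epsilon d}+\frac{C}{d}\cdot 4\le \frac{C}{d}.
\]
Corollary \ref{C202} applied on $B(x,rd)$ then gives $\aint_{B(x,rd)}e\,\dmu_f\le C\bigl(e(x)+(C/d)(rd)^2\bigr)\le C_{Ex}r^2 d$ directly, with no residual $rd$ term. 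The moral: the cutoff only needs to make $e$ compactly supported for the mean value inequality; it does \emph{not} need to be adapted to the small ball $B(x,rd)$.
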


\begin{proof}
By Corollary\ref{C201} with $X=\{q',q\}$, there exists a cutoff function $\phi$.
If $u=\phi e$, then
\begin{align*} 
\df u\ =\ \df \phi  e+\phi \df e+2\la \na \phi,\na e \ra\ \le\ |\df \phi |e+\phi
\df e+4|\na \phi |\ \le\ \frac{C}{d}
\end{align*}
where we have used Theorem \ref{T102} and $d\le 1$ for the last inequality.
Then the theorem follows from Corollary \ref{C202}.
\end{proof}

Similar to \cite[Section $2$]{CN12}, we evolve the distance functions to $q'$
and $q$ by the weighted heat equation. For a fixed $\delta>0$, by using
Corollary \ref{C201}, there exists a cutoff function $\phi$ such that $\phi=1$
on $M_{\delta d/4,8d}$ and $\phi=0$ on $M\backslash M_{\delta d/16,16d}$ where
\begin{align*} 
M_{r,s} \coloneqq A_{rd,sd}(q') \cap A_{rd,sd}(q).
\end{align*}
Then we define $h_t^{\pm}$ and $e_t$ to be solutions to the equation
$\square_fh_t^{\pm}=0$ and $\square_fe_t=0$ with initial values $h_0^{\pm}=\phi
d^{\pm}$ and $e_0=\phi e$, respectively.

\begin{lem}
\label{L301}
There exists a constant $C=C(m,K,F(2D),\delta)>0$ such that
\begin{align*} 
\df e_t\,,\df h_t^-\,,-\df h_t^+\ \le\ \frac{C}{d}.
\end{align*}
\end{lem}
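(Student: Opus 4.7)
The plan is to first establish distributional upper bounds on the $f$-Laplacian of each initial datum $\phi d^-$, $\phi d^+$, $\phi e$, and then propagate these bounds to positive time via the heat kernel representation combined with the self-adjointness of $\Delta_f$. Since the heat kernel regularizes, the resulting bounds on $\Delta_f h_t^\pm$ and $\Delta_f e_t$ will hold classically for $t>0$.

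For the initial estimate I would expand
\begin{align*}
\Delta_f(\phi d^-) \;=\; \phi\,\Delta_f d^- \,+\, d^- \Delta_f\phi \,+\, 2\langle\nabla\phi,\nabla d^-\rangle,
\end{align*}
and exploit $\mathrm{supp}\,\phi \subset M_{\delta d/16,\,16d}$. On this support $d^-\geq \delta d/16$, so Theorem~\ref{T102} gives $\phi\Delta_f d^-\le C(m,K,F(2D),\delta)/d$ (the additive constants $(m-1)\sqrt{K}$ and $F(2D)$ are absorbed into $C/d$ because $d\le 1$). Also $d^-\le 17d$ on $\mathrm{supp}\,\phi$, and Corollary~\ref{C201} gives $|\nabla\phi|\le C/(\delta d)$ and $|\Delta_f\phi|\le C/(\delta d)^2$ on the inner collar (and analogous bounds on the outer collar), so the remaining two terms are each bounded by $C/d$. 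The bound for $\phi d^+$ is symmetric, using $d^+ = d - d(\cdot,q)$ so that $-\Delta_f(\phi d^+) \le C/d$ by the same argument applied to $d(\cdot,q)$. Since $e = d^- - d^+$ pointwise, linearity yields $\Delta_f(\phi e)\le 2C/d$. These inequalities are interpreted distributionally, as Theorem~\ref{T102} really furnishes a barrier (hence distributional) upper bound on $\Delta_f d^-$.

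Writing $h_t^-(x) = \int_M H(x,y,t)\phi(y)d^-(y)\, \dmu_f(y)$, differentiating in $t$, and using the symmetry $\Delta_{f,x}H(x,y,t)=\Delta_{f,y}H(x,y,t)$, I would then integrate by parts to obtain
\begin{align*}
\Delta_f h_t^-(x) \;=\; \int_M \Delta_{f,y} H(x,y,t)\,\phi(y)d^-(y)\,\dmu_f(y) \;=\; \int_M H(x,y,t)\,\Delta_f(\phi d^-)(y)\,\dmu_f(y) \;\le\; \frac{C}{d},
\end{align*}
using stochastic completeness $\int H\,\dmu_f = 1$ established before Theorem~\ref{T201}. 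The analogous arguments cover $-\Delta_f h_t^+$ and $\Delta_f e_t$. The main obstacle is the rigorous justification of the distributional inequality and the integration-by-parts step, since $d^\pm$ are only Lipschitz; this is handled in the standard way by mollifying the Laplacian comparison of Theorem~\ref{T102}, using the smoothness and rapid decay of the test function $H(x,\cdot,t)$, and then passing to the limit.
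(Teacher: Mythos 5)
Your proposal is correct and follows essentially the same route as the paper: bound $\Delta_f(\phi d^\pm)$ and $\Delta_f(\phi e)$ at $t=0$ via the product expansion, Theorem~\ref{T102} and the cutoff bounds of Corollary~\ref{C201}, then propagate to $t>0$ through the heat kernel representation, the symmetry $\Delta_{f,x}H=\Delta_{f,y}H$, integration by parts, and $\int H\,\dmu_f=1$. Your version is in fact more explicit than the paper's (which compresses the $t>0$ step into one line), and your handling of $\phi e$ by linearity from $d^\pm$ versus the paper's direct bound on $\Delta_f e$ is an immaterial difference.
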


\begin{proof}
We only prove the conclusion for $e_t$, others are similar.
As before, we have
\begin{align*} 
\df e_0\ =\ \df (\phi e)\ \le\ \frac{C}{d}. 
\end{align*}

Moreover, for any $t>0$,
\begin{align*} 
\df e_t\ =\ \int \Delta_{f,x} H(x,y,t)  \phi(y)e(y)\ \dmu_f(y)\ =\ \int
\Delta_{f,y} H(x,y,t)  \phi(y)e(y)\ \dmu_f(y)\ \le\ \frac{C}{d}.
\end{align*}
\end{proof}

\begin{lem}
\label{L302}
There exists a constant $C=C(m,K,F(2D),\delta)>0$ such that for any $\ep \le
\bar{\ep}(m,\delta)$ and $x \in M_{\delta/2,4}$, the following holds for each
$y \in B(x,10\ep d)$, \begin{enumerate}[label=(\roman*)]
\item $|e_{\ep^2d^2}(y)|\ \le\ C(\ep^2d+e(x)).$
\item$ |\pt e_{\ep^2d^2}|(y)+|\df e_{\ep^2d^2}|(y)\ \le\
C(\frac{1}{d}+\frac{e(x)}{\ep^2d^2}).$
\item $\aint_{B(y,\ep d)}|Hess_{e_{\ep^2d^2}}|\ \dmu_f\ \le\
C(\frac{1}{d}+\frac{e(x)}{\ep^2d^2})^2$. 
\item $|h^{\pm}_{\ep^2 d^2}-d^{\pm}|(x)\ \le\ C(\ep^2d+e(x)).$
\item $|\na h^{\pm}_{\ep^2d^2}|(x)\ \le\ 1+C\ep^2 d^2.$
\end{enumerate}
\end{lem}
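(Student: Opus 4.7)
The plan is to prove (i)--(v) in order, with (ii) and (iii) following from (i) by parabolic regularity, (iv) deduced from (i) via the identity $e_t = h^-_t - h^+_t$ and the Laplacian bounds of Lemma \ref{L301}, and (v) obtained from the parabolic Bochner identity. The main tools are the two-sided $f$-heat kernel bound (Theorem \ref{T201}), the one-sided Laplacian bounds (Lemma \ref{L301}), the excess integral estimate (Theorem \ref{T301}), and volume doubling (\ref{eqn: volume_doubling}).

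For (i), I would represent $e_{\ep^2 d^2}(y)$ as the heat semigroup acting on the initial datum,
\begin{align*}
e_{\ep^2 d^2}(y)\ =\ \int H(y,z,\ep^2 d^2)\,\phi(z)\,e(z)\,\dmu_f(z),
\end{align*}
decompose the integration domain into dyadic annuli around $y$, and use the Gaussian upper bound in (\ref{E201}) together with volume doubling to dominate the integral by $C\aint_{B(y,C_0\ep d)}\phi e\,\dmu_f$ (the tails decaying geometrically). Since $y\in B(x,10\ep d)\subset M_{\delta/4,8}$ for $\ep\le\bar\ep(m,\delta)$, $\phi\equiv 1$ on this ball and the average reduces, up to doubling, to $\aint_{B(x,C_0'\ep d)}e\,\dmu_f$. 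Applying Theorem \ref{T301} with $r$ chosen as the maximum of $C_0'\ep$ and $\sqrt{e(x)/d}$ (provided $\ep\le\bar r$), the average is bounded by $C\max(\ep^2 d,e(x))\le C(\ep^2 d+e(x))$, giving (i); that $|e_{\ep^2 d^2}(y)|=e_{\ep^2 d^2}(y)$ follows from the maximum principle.

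Once (i) is in hand, (iv) is essentially algebraic: $e_t=h^-_t-h^+_t$ (by linearity and the initial condition $h^-_0-h^+_0=\phi(d^--d^+)=\phi e=e_0$), and Lemma \ref{L301} gives $\df h^-_t\le C/d$, $\df h^+_t\ge -C/d$; integrating the heat equation $\pt h^\pm_t=\df h^\pm_t$ in time yields $h^-_{\ep^2 d^2}(x)-d^-(x)\le C\ep^2 d$ and $h^+_{\ep^2 d^2}(x)-d^+(x)\ge -C\ep^2 d$, while the matching inequalities follow from the rearrangement $h^-_t-d^-=e_{\ep^2 d^2}+(h^+_t-d^+)-e(x)$ (and its symmetric counterpart) together with (i). For (ii), I would apply the Cheng-Yau-type estimate (Lemma \ref{L200}) and its time-derivative analogue to $e_t$ on the parabolic cylinder $B(y,\ep d)\times[\ep^2 d^2/2,\ep^2 d^2]$, using the oscillation bound $\text{osc}\,e_t\le C(\ep^2 d+e(x))$ inherited from (i); the heat equation $\pt e_t=\df e_t$ then packages both $|\pt e_t|$ and $|\df e_t|$ into the same bound.

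For (iii), Lemma \ref{L200a} applied on the same cylinder yields
\begin{align*}
\int_{\ep^2 d^2/2}^{\ep^2 d^2}\aint_{B(y,\ep d/4)}|Hess_{e_t}|^2\,\dmu_f\,dt\ \le\ C(\ep d)^{-2}\big(\ep^2 d+e(x)\big)^2,
\end{align*}
and Harnack (Theorem \ref{T201a}) transfers this space-time average to a spatial bound at the fixed time $\ep^2 d^2$ after Cauchy-Schwarz. Finally, for (v), the parabolic Bochner identity (\ref{E200a}) gives $\square_f|\na h^\pm_t|^2\le 2K|\na h^\pm_t|^2$, so $e^{-2Kt}|\na h^\pm_t|^2$ is a $\square_f$-subsolution; since $|\na d^\pm|\le 1$ a.e.\ and $\phi\equiv 1$ on $B(x,10\ep d)$ for $\ep$ small, the initial datum satisfies $|\na h^\pm_0|\le 1$ on this ball, and the mean value inequality (Lemma \ref{L202}) applied to this subsolution yields $|\na h^\pm_{\ep^2 d^2}|^2(x)\le 1+C\ep^2 d^2$. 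The main technical obstacle throughout is the delicate choice of $r$ in Theorem \ref{T301} and the interplay between the regimes $e(x)\le\ep^2 d$ and $e(x)>\ep^2 d$, together with careful bookkeeping of the constants (depending only on $m,K,F(2D),D,\delta$) accumulating through dyadic heat-kernel decomposition and doubling.
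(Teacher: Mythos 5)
Your item (iv) is fine and coincides with the paper's (Colding--Naber's) argument, but two of your steps contain genuine gaps. The clearest one is (v). First, Lemma \ref{L202} runs in the wrong direction: it bounds the spatial \emph{average of the initial datum} by the value of the solution at the center at a \emph{later} time, so it cannot convert an initial bound $|\na h^{\pm}_0|\le 1$ into a bound on $|\na h^{\pm}_{\ep^2d^2}|(x)$. Second, even with the right tool (the heat-kernel representation of the subsolution $e^{-2Kt}|\na h^{\pm}_t|^2$), knowing $|\na h^{\pm}_0|\le 1$ only on $B(x,10\ep d)$ is not enough: heat propagation is global, and on the cutoff region where $\phi\not\equiv 1$ one only has $|\na(\phi d^{\pm})|\le C\delta^{-1}$. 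The entire point of the paper's proof of this item (following \cite[Lemma 2.17]{CN12}) is to control the contribution of that far region by the heat-kernel tail estimate of Corollary \ref{C201a}, namely $\int_{M\setminus B(x,c\delta d)}H(x,\cdot,\ep^2d^2)\,\dmu_f\le C(\delta d)^{-2}\ep^2d^2$; this ingredient is absent from your sketch and cannot be dispensed with.

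Two further steps would not go through as written. In (iii), the Harnack inequality applies to nonnegative solutions of the weighted heat equation, and $|Hess_{e_t}|^2$ is neither a solution nor a subsolution with controlled error (that would require curvature-derivative bounds), so it cannot ``transfer'' the space-time average of Lemma \ref{L200a} to the exact time slice $\ep^2d^2$; one can only select a good slice $c\ep^2d^2$, $c\in[1/2,2]$, by a mean-value-in-time argument, which is what the paper does (and why the constant $c$ appears in Lemma \ref{L303}). In (i), your reduction through Theorem \ref{T301} with $r=\max(C_0'\ep,\sqrt{e(x)/d})$ bounds the average of $e$ over $B(x,rd)$, whereas the heat-kernel localization lives at scale $\ep d$; in the regime $e(x)\gg\ep^2d$, returning from the large-ball average to scale $\ep d$ costs a volume factor of order $(r/\ep)^m$, which destroys the claimed bound $C(\ep^2d+e(x))$. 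This is fixable by invoking Corollary \ref{C202} directly at $x$ (which is exactly how Theorem \ref{T301} is proved), but the paper's route avoids the issue altogether: integrate $\df e_t\le C/d$ (Lemma \ref{L301}) in time to get $e_t(x)\le e(x)+C\ep^2d$ for $t\in[\ep^2d^2,100\ep^2d^2]$, then spread this to $B(x,10\ep d)$ by the parabolic Harnack inequality applied to the nonnegative solution $e_t$; item (ii) then follows from $\df e_t\le C/d$ combined with the Li--Yau estimate $\pt e_t\ge -Ce_t/t$ and (i), rather than from the unproved ``time-derivative analogue'' of Lemma \ref{L200} that your argument assumes.
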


\begin{proof}
From Lemma \ref{L301}, we have
\begin{align*} 
e_t(x)\ =\ e_0(x)+\int_0^t \df e_s\ \text{d}s\ \le\ e(x)+\frac{Ct}{d}
\end{align*}
and hence
\begin{align*} 
e_t(x)\ \le\ e_0(x)+C\ep^2d
\end{align*}
for any $t \in [\ep^2d^2,100\ep^2d^2]$. Then it follows from Theorem
\ref{T201a} that if $y \in B(x,10d\ep)$,
\begin{align*} 
e_{\ep^2d^2}(y)\ \le\ C(\ep^2d+e(x))
\end{align*}
and the (i) follows. (ii) follows from Lemma \ref{L200} and (iii) follows from
Lemma \ref{L301} and the gradient estimate Theorem \ref{T201a}. In addition,
(iv) follows from Lemma \ref{L200a}. Indeed, since we have
\begin{align*} 
\int_{\ep^2d^2/2}^{\ep^2d^2} \aint_{B(y,\ep d)} |Hess_{e_t}|^2\
\dmu_f \text{d}t\ \le\ C\lc\frac{1}{d}+\frac{e(x)}{\ep d}\rc^2.
\end{align*}
Therefore such $r$ must exists. Next (v) follows from an identical proof in
\cite[Lemma $2.3$]{CN12}. Finally, (vi) follows the same as \cite[Lemma
$2.17$]{CN12} by using Corollary \ref{C201a}.
\end{proof}

Recall that an $\ep$-geodesic between $q'$ and $q$ is a unit speed curve
$\sigma$ such that $||\sigma|-d(q',q) |\le \ep $. In particular, it implies that
for any $x \in \sigma$, $e(x) \le \ep^2d^2$.

\begin{lem}
\label{L303}
There exists a constant $C_3=C_3(m,F,K,D,\delta)>0$ such that for any $\ep
\le \bar{\ep}(m,\delta)$, $x \in M_{\delta/2,4}$ with $e(x) \le \ep^2d$, and
any $\ep$-geodesic $\sigma$ connecting $q'$ and $q$, there exists $c \in
[1/2,2]$ with \begin{enumerate}[label=(\roman*)]
\item  $|h^{\pm}_{\ep^2 d^2}-d^{\pm}|(x)\ \le\ C_3\ep^2d.$
\item $ \aint_{B(x,\ep d)}||\na h^{\pm}_{c\ep^2d^2}|^2-1|\ \dmu_f\ \le\
C_3\ep.$
\item $\int_{\delta d}^{(1-\delta)d} \left(\aint_{B(x,\ep d)}|\na
h^{\pm}_{c\ep^2d^2}|^2-1|\ \dmu_f\right)\ \le\ C_3\ep^2d.$
\item $\int_{\delta d}^{(1-\delta)d}
\left(\aint_{B(\sigma(s),\ep d)}|Hess_{h^{\pm}_{c\ep^2d^2}}|\
\dmu_f\right)\ \le\ C_3d^{-2}.$
\end{enumerate}
\end{lem}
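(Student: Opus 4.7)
The plan is to deduce (i) directly from Lemma \ref{L302}, and to derive (ii)--(iv) by integrating the parabolic Bochner identity \eqref{E200a} applied to $h^{\pm}_t$ against spacetime cutoffs, then selecting a common $c\in[1/2,2]$ via a pigeonhole argument in $t\in[\epsilon^2 d^2/2,2\epsilon^2 d^2]$.

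Part (i) is immediate: since $e(x)\le\epsilon^2 d$ by hypothesis, Lemma \ref{L302}(iv) yields $|h^{\pm}_{\epsilon^2 d^2}-d^{\pm}|(x)\le C(\epsilon^2 d+e(x))\le C\epsilon^2 d$.

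For the Hessian estimate (iv), apply \eqref{E200a} to $u=h^{\pm}_t$:
\begin{align*}
\square_f|\nabla h^{\pm}_t|^2\ \le\ -2|Hess_{h^{\pm}_t}|^2+2K|\nabla h^{\pm}_t|^2.
\end{align*}
Using Corollary \ref{C201} with $X=\{q',q\}$, I build a spatial cutoff $\phi$ equal to $1$ on $M_{\delta/2,4}$ and supported in a slightly thicker double annulus, satisfying $d|\nabla\phi|+d^2|\Delta_f\phi|\le C$; I then couple this with a time cutoff $\eta(t)$ supported in $[\epsilon^2 d^2/4,4\epsilon^2 d^2]$ with $|\eta'|\le C(\epsilon d)^{-2}$. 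Multiplying the Bochner inequality by $\phi\eta$, integrating by parts in spacetime, and invoking the $L^\infty$ gradient estimate $|\nabla h^{\pm}_t|\le 1+C\epsilon^2 d^2$ of Lemma \ref{L302}(v) produces a spacetime $L^2$-Hessian bound. A pigeonhole in $t$ then selects $c\in[1/2,2]$ at which a spatial $L^2$-Hessian bound holds pointwise in time; Cauchy--Schwarz upgrades this to an $L^1$-bound, and the segment inequality Theorem \ref{T104}, applied in the convex region $M_{\delta/2,4}$ with $V$ a thin tube around $\sigma([\delta d,(1-\delta)d])$, transfers the spatial bound to the line integral in (iv).

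For (ii) and (iii), decompose $\bigl||\nabla h^{\pm}_{c\epsilon^2 d^2}|^2-1\bigr|=2(|\nabla h^{\pm}_{c\epsilon^2 d^2}|^2-1)_+ + (1-|\nabla h^{\pm}_{c\epsilon^2 d^2}|^2)$, where the first summand is $O(\epsilon^2 d^2)$ directly from Lemma \ref{L302}(v). For the signed deficit, integrate by parts against a cutoff $\phi$ localized on $B(x,\epsilon d)$:
\begin{align*}
\int|\nabla h^{\pm}_{c\epsilon^2 d^2}|^2\phi\,\dmu_f\ =\ -\int h^{\pm}_{c\epsilon^2 d^2}(\Delta_f h^{\pm}_{c\epsilon^2 d^2})\phi\,\dmu_f-\int h^{\pm}_{c\epsilon^2 d^2}\la\nabla h^{\pm}_{c\epsilon^2 d^2},\nabla\phi\ra\,\dmu_f,
\end{align*}
and then use $|\Delta_f h^{\pm}_t|\le C/d$ (Lemma \ref{L301}), the closeness $h^{\pm}_{c\epsilon^2 d^2}\approx d^{\pm}$ from (i), and the Cheng--Yau gradient estimate Lemma \ref{L200} to extract (ii). Part (iii) follows by propagating the pointwise estimate along $\sigma$ via $\frac{d}{ds}|\nabla h^{\pm}_{c\epsilon^2 d^2}|^2(\sigma(s))=2\,Hess_{h^{\pm}_{c\epsilon^2 d^2}}(\nabla h^{\pm}_{c\epsilon^2 d^2},\sigma'(s))$, integrating in $s$, and absorbing the resulting line-integrated Hessian via (iv).

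The main technical obstacle is the careful tracking of the $F(2D)$-dependent constants throughout the argument: unlike the original Colding--Naber setting, where only dimensional and Ricci lower bound constants appear, here every Bochner integration, every cutoff estimate (Corollary \ref{C201}), and every Laplacian comparison (Lemma \ref{T102}) picks up $F(2D)$-factors arising from the changing gradient control on the potential. These are ultimately absorbed into $C_3(m,F,K,D,\delta)$ since $q,q'\in B(p,D/2)$ and $d\le 1$, but the bookkeeping is delicate. A secondary concern is synchronizing the pigeonhole value $c\in[1/2,2]$ so that (ii), (iii), and (iv) hold simultaneously; this is handled by summing the three non-negative majorants and applying a single pigeonhole on their total.
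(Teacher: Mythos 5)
Your part (i) is fine and matches the intended argument (Lemma \ref{L302}(iv) plus $e(x)\le \ep^2 d$), and your overall scheme for (iv) --- spacetime $L^2$-Hessian bound from the Bochner formula \eqref{E200a} with the cutoffs of Corollary \ref{C201}, a pigeonhole in $t\in[\ep^2d^2/2,2\ep^2d^2]$ to fix a common $c$, then Cauchy--Schwarz and the segment inequality --- is the Colding--Naber route that the paper simply cites (\cite[Theorem 2.19]{CN12}). But your treatment of (ii) has a genuine gap. First, Lemma \ref{L301} gives only the \emph{one-sided} bounds $\Delta_f h_t^-\le C/d$ and $-\Delta_f h_t^+\le C/d$; no two-sided bound $|\Delta_f h_t^\pm|\le C/d$ is available (after heat smoothing the best lower bound on $\Delta_f h_t^-$, e.g.\ via the Li--Yau estimate, is of order $-h_t/t\sim -1/(\ep^2 d)$, far worse than $1/d$). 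Second, even granting such a bound, the identity you integrate by parts cannot give the required precision: with $h^\pm_{c\ep^2d^2}\approx d^\pm$ of size $\sim d$ and a cutoff on $B(x,\ep d)$ with $|\na\phi|\sim(\ep d)^{-1}$, the terms $\int h\,\Delta_f h\,\phi$ and $\int h\la\na h,\na\phi\ra$ are of size $O(1)$ and $O(\ep^{-1})$ relative to $\mu_f(B)$, whereas (ii) needs $\aint_B|\na h|^2\ge 1-C\ep$. The cancellation that makes the argument work is to integrate by parts the \emph{difference} $h^\pm_{c\ep^2d^2}-d^\pm$ (of size $\ep^2 d$ by (i) and the excess estimate) against $\Delta_f d^\pm$ and $\na\phi$, after converting the one-sided Laplacian comparison (Lemma \ref{T102}) into the $L^1$ bound $\int_B|\Delta_f d^\pm|\,\dmu_f\le C\mu_f(B)/(\ep d)$; equivalently one writes $\bigl||\na h|^2-1\bigr|\le C\ep^2d^2+2\bigl(1-\la\na h,\na d^\pm\ra\bigr)+|\na h-\na d^\pm|^2$ and estimates the cross term this way. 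That step is the heart of the Cheeger--Colding/Colding--Naber argument and is missing from your sketch.

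The mechanism you propose for (iii) also fails quantitatively. Propagating $|\na h|^2$ along $\sigma$ by $\tfrac{d}{ds}|\na h|^2(\sigma(s))=2\,Hess_{h}(\na h,\sigma')$ and absorbing via (iv) gives, after Cauchy--Schwarz on the Hessian line integral, an error of order $O(1)$ (and only pointwise values on $\sigma$, not the ball averages appearing in (iii)); this cannot reach the bound $C_3\ep^2 d$, which is strictly better than what one gets by merely integrating (ii) over a length-$d$ interval. The actual proof of (iii) in \cite{CN12} keeps the excess on the right-hand side of the refined version of (ii), uses that $e(\sigma(s))$ is uniformly small along an $\ep$-geodesic together with the mean value excess estimate (Theorem \ref{T301}) at each $\sigma(s)$, and integrates along $\sigma$ using the fundamental theorem of calculus for $(h^\pm_{c\ep^2d^2}-d^\pm)(\sigma(s))$ with the endpoint control from (i); a segment-inequality/averaging step converts this into the stated ball-average form. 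So (i) and the outline of (iv) are essentially right, but (ii) and (iii) as written would not close, and both need the excess-based cancellation rather than crude Laplacian or Hessian bounds.
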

\begin{proof}
The proof is the same as \cite[Theorem $2.19$]{CN12}.
\end{proof}

Similar to  \cite[Theorem $2.20$]{CN12}, we also have

\begin{lem}
\label{L304}
There exists a constant $C_4=C_4(m,F,K,D,\delta)>0$ such that for any $x \in
M_{\delta,2}$ and $\delta\le s <t \le d_x=d(q',x)$, the following estimates
hold, \begin{enumerate}[label=(\roman*)]
\item $\int_{\delta}^{d_x} ||\na h^-_{r^2}|^2-1|\ \le\ \frac{C_4}{d}(e(x)+r^2)$.
\item $\int_{\delta}^{d_x} |\la \na h^-_{r^2},\na d^- \ra-1|\ \le\
\frac{C_4}{d}(e(x)+r^2)$.
\item $\int_s^t | \na h^-_{r^2}-\na d^-|\ \le\ \frac{C_4\sqrt{t-s}}{\sqrt
d}(\sqrt{e(x)}+r)$.
\end{enumerate}
\end{lem}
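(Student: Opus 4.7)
The plan is to follow the template of \cite[Theorem 2.20]{CN12}, exploiting Lemma~\ref{L302} together with elementary algebra along the minimal geodesic. Let $\gamma:[0,d_x]\to M$ be a unit-speed minimal geodesic from $q'$ to $x$. Along $\gamma$ one has $d^-(\gamma(s))=s$ and $\nabla d^-(\gamma(s))=\dot\gamma(s)$ for a.e. $s$; moreover the triangle inequality gives $e(\gamma(s))\le e(x)$ for every $s\in[0,d_x]$.

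For (ii), I would exploit the telescoping identity
\begin{align*}
\int_\delta^{d_x}\bigl(\langle\nabla h^-_{r^2},\nabla d^-\rangle-1\bigr)(\gamma(s))\,ds\ =\ \bigl(h^-_{r^2}(x)-d_x\bigr)-\bigl(h^-_{r^2}(\gamma(\delta))-\delta\bigr),
\end{align*}
which, after applying Lemma~\ref{L302}(iv) at the two endpoints, yields a \emph{signed} integral bound of the correct order $\tfrac{C}{d}(e(x)+r^2)$. To upgrade this to an absolute-value bound, I would use the algebraic identity
\begin{align*}
\langle\nabla h^-_{r^2},\nabla d^-\rangle-1\ =\ \tfrac12(|\nabla h^-_{r^2}|^2-1)-\tfrac12|\nabla h^-_{r^2}-\nabla d^-|^2,
\end{align*}
combined with the pointwise bound $|\nabla h^-_{r^2}|^2\le 1+Cr^2$ from Lemma~\ref{L302}(v): this controls the positive part of the integrand by $Cr^2$, and the bound on the negative part then follows from the signed integral estimate (using the assumption $d\le 1$ to absorb the length factor).

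For (i), the same identity rearranged gives $|\nabla h^-_{r^2}|^2-1=|\nabla h^-_{r^2}-\nabla d^-|^2+2(\langle\nabla h^-_{r^2},\nabla d^-\rangle-1)\ge 2(\langle\nabla h^-_{r^2},\nabla d^-\rangle-1)$ pointwise along $\gamma$; integrating and using (ii) yields the signed lower bound $\int_\delta^{d_x}(|\nabla h^-_{r^2}|^2-1)\ge-\tfrac{C}{d}(e(x)+r^2)$. The positive part is bounded pointwise by $Cr^2$ via Lemma~\ref{L302}(v), so the $L^1$ bound in (i) follows by the same positive/negative splitting. Finally, (iii) follows from (i) and (ii) via
\begin{align*}
\int_s^t|\nabla h^-_{r^2}-\nabla d^-|^2\ \le\ \int_s^t\bigl(\,\bigl||\nabla h^-_{r^2}|^2-1\bigr|+2\bigl|\langle\nabla h^-_{r^2},\nabla d^-\rangle-1\bigr|\,\bigr)\ \le\ \frac{C}{d}\bigl(e(x)+r^2\bigr),
\end{align*}
followed by Cauchy--Schwarz on $[s,t]$, which converts the $\tfrac{1}{d}(e+r^2)$ bound into the $\tfrac{\sqrt{t-s}}{\sqrt d}(\sqrt e+r)$ bound.

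The main obstacle is the careful verification of the hypotheses of Lemma~\ref{L302}(iv) at the endpoint $\gamma(\delta)$: one must ensure $\gamma(\delta)\in M_{\delta/2,4}$ so that the estimate $|h^-_{r^2}-d^-|(\gamma(\delta))\le C(r^2/d+e(\gamma(\delta)))$ applies, which reduces to checking $\delta\ge \tfrac{\delta d}{2}$ and $d-\delta\ge \tfrac{\delta d}{2}$, both of which are straightforward under the standing assumption $d\le 1$. The constants throughout depend on $m$, $F(2D)$, $K$, $D$ and $\delta$, uniformly on the compact region $B(p,2D)$, so no appeal to the segment inequality or to the more refined Lemma~\ref{L303} is needed --- all the analytic input has already been absorbed into Lemma~\ref{L302}.
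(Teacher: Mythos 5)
Your overall route is exactly the one the paper intends --- the paper offers no argument beyond ``similar to \cite[Theorem 2.20]{CN12}'' --- and your mechanics (telescoping $\int(\la\na h^-_{r^2},\na d^-\ra-1)$ along $\gamma$ via Lemma~\ref{L302}(iv), controlling the positive part of the integrand pointwise via Lemma~\ref{L302}(v), recovering (i) from the algebraic identity, and obtaining (iii) by Cauchy--Schwarz) are precisely Colding--Naber's argument; the algebra in all three steps is correct.

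The genuine gap is the domain verification for Lemma~\ref{L302}. You need (iv) at the two endpoints \emph{and} (v) at almost every $\gamma(s)$, $s\in[\delta,d_x]$, since the pointwise bound on the positive part is used along the whole segment; you only discuss the endpoint $\gamma(\delta)$, and the check you give there is incorrect: membership in $M_{\delta/2,4}$ requires $d(q,\gamma(\delta))\ge \delta d/2$, and the inequality $d-\delta\ge \delta d/2$ neither follows from $d\le 1$ (it fails when $d$ is comparable to $\delta$, which is allowed since only $d_x\ge\delta$, i.e.\ $d\ge\delta/2$, is forced) nor says anything about interior points $\gamma(s)$ with $s$ close to $d$. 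Indeed, if the minimal geodesic from $q'$ to $x$ enters the region near $q$ where the cutoff $\phi$ used to define $h^-_t$ degenerates (possible for $x\in M_{\delta,2}$, e.g.\ $x$ beyond $q$ on the prolongation of $\gamma_{q'q}$), Lemma~\ref{L302} gives nothing there and your positive/negative splitting breaks down. The standard repair is a dichotomy: from $d(q,\gamma(s))\ge\max\{d-s,\ d(q,x)-(d_x-s)\}$ one gets $d(q,\gamma(s))\ge d(q,x)-e(x)/2\ge \delta d-e(x)/2$ for every $s$, so if $e(x)\le\delta d$ the entire segment $\gamma([\delta,d_x])$ lies in $M_{\delta/2,4}$ and your argument runs verbatim; if instead $e(x)>\delta d$, the right-hand sides of (i)--(iii) are bounded below by $c(\delta)$ (resp.\ $c(\delta)\sqrt{t-s}$ for (iii)), and the estimates hold trivially once one has the crude uniform bound $|\na h^-_{r^2}|\le C(m,F,K,D,\delta)$, which follows from $|\na(\phi d^-)|\le C(\delta)$ together with the maximum principle applied to $e^{-2Kt}|\na h^-_t|^2$ (as in the proof of Lemma~\ref{L200}), after enlarging $C_4$. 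With this dichotomy inserted, your proof is complete and coincides with the paper's intended (\cite{CN12}) argument; as written, the step invoking Lemma~\ref{L302}(v) along the whole segment is unjustified and your endpoint verification is false as stated.
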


\section{Convexity of the regular part in Gromov-Hausdorff limits}

In this section we prove our main improvement of the previous structural
results in~\cite{LLW17} about the pointed-Gromov-Hausdorff limits of manifolds
in $\mathcal{N}_m(F,K)$: we will show that the regular part, as defined in
Section 2.3, is both weakly convex and almost everywhere convex with respect to
the limit measure. In conjunction with the regularity improvements obtained in
\cite{LLW17}, we also show that the regular part on a pointed-Gromov-Hausdorff
limit of manifolds in $\mathcal{N}_m(F,K)$ is actually a strongly convex open
subset. Results about Ricci shrinkers are summarized in Theorem~\ref{thm: RS}.

\subsection{Gromov-Hausdorff distance between nearby metric balls} 
In this subsection, we prove a Gromov-Hausdorff distance control of nearby
geodesic balls of the same size. The proof follows from the original idea
in~\cite[Section 3]{CN12}, but since the Ricci lower bound, the basic
assumption underlying essentially everywhere of their arguments, is unavailable
for manifolds in $\mathcal{N}_m(F,K)$, we have to rework most of the details
there and fit them into our setting.

Fix $\gamma:[0,l]\to M$, a minimal geodesic of length $l$ and unit speed. Let
$\gamma(0)=q$ and $\gamma(l)=q'$. For $d_q:=d(q,-)$, let $\psi_s$ be the
gradient flow generated by the almost everywhere defined vector field $-\nabla
d_q$. Notice that $\psi_s$ is only defined almost everywhere on $M$, and 
that $\psi_s$ is smooth away from $q$ and the cut locus $\mathcal{C}_q$ of $q$.
Moreover, if $x\not\in \mathcal{C}_q\cup\{q\}$, then the integral curve
$s\mapsto \psi_s(x)$ defines a minimal geodesic of unit speed for $s\in
[0,d(q,x))]$. In this section we fix some $\delta \in (0,1\slash 10)$, and will
control the Gromov-Hausdorff distance between balls centered at nearby points on
$\gamma([\delta l,l-\delta l])$.

Fixing $t\in [\delta l,l-\delta l]$, for each $r\in [0, \delta \slash 10]$, we
consider the following core neighborhood of $\gamma(t)$:
\begin{align*}
H_r^t\ :=\ \left\{y\in B(\gamma(t),r):\ \forall s\in
[0,t-\delta l],\ d(\psi_s(y),\gamma(t-s))\le
\exp\left(C_5\sqrt{s}\right)d(y,\gamma(t))\right\},
\end{align*}
where 
\begin{align*}
C_5=C_5(m,F,K,D,\delta,l):=\left(4(m-1)(\delta
l)^{-1}+2(m-1)\sqrt{K}+2F(2D)+(1-\delta)Kl\right)^{\frac{1}{2}}.
\end{align*}
Intuitively speaking, such neighborhood of $\gamma(t)$ consists of
points in $B(\gamma(t),r)$ that are carried by $\psi_s$ up to a controllable
distance for all $s\le t-\delta l$. On a manifold in $\mathcal{N}_m(F,K)$, we
could in fact conclude that when $r$ is sufficiently small, \emph{almost every}
point of $B(\gamma(t),r)$ are in $H^t_r$:

\begin{lem}\label{lem: core_nbhd}
Fix $\delta\in (0,1\slash 10)$ and $t\in [\delta l,l-\delta l]$. Let
$\gamma:[0,l]\to M$ be a minimal geodesic of unit speed, with $\gamma(0)=q$ and
$\gamma(l)=q'$, then for $r>0$ sufficiently small,
\begin{align}
\mu_f(H_r^t)\ =\ \mu_f(B(\gamma(t),r)).
\end{align}
\end{lem}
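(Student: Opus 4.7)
The claim says that, outside a null set, every point in the small ball $B(\gamma(t),r)$ is carried by the backward gradient flow $\psi_s$ in a controlled manner. My plan has three stages.

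First, I would discard the points where $\psi_s$ is not classically well-defined. The cut locus $\mathcal{C}_q$ of $q$ in the smooth Riemannian manifold $(M,g)$ has Hausdorff codimension at least one, so $\mu_f(\mathcal{C}_q)=0$; together with $\mu_f(\{q\})=0$, this reduces the lemma to showing that for $r$ sufficiently small, every point $y\in B(\gamma(t),r)\setminus(\mathcal{C}_q\cup\{q\})$ satisfies the inequality in the definition of $H_r^t$. For such $y$ there is a unique minimal unit-speed geodesic $\eta_y:[0,d(q,y)]\to M$ from $q$ to $y$, and $\psi_s(y)=\eta_y(d(q,y)-s)$ is smooth on $[0,d(q,y)]$.

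Second, I would set up a variational comparison. Since $\gamma([\delta l,l-\delta l])$ is compact and misses $\mathcal{C}_q$, and the set of non-cut points is open, for $r$ small enough one can connect $\gamma(t)$ to $y$ through a smooth one-parameter family of initial directions $v_u\in S_qM$, $u\in[0,1]$, with $v_0=\gamma'(0)$, $\exp_q(d(q,y)v_1)=y$, and such that the resulting family of minimal geodesics $\gamma_u(\tau):=\exp_q(\tau v_u)$ avoids $\mathcal{C}_q$ throughout. The variation field $V_u(\tau):=\partial_u\gamma_u(\tau)$ is then a Jacobi field along each $\gamma_u$ with $V_u(0)=0$, and after a mild reparameterization to account for $d(q,y)\ne t$, one has
\[
d(\psi_s(y),\gamma(t-s))\ \le\ \int_0^1 |V_u(t-s)|\,du + o(r).
\]

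Third, I would estimate $|V_u(\tau)|$ for $\tau\in[\delta l,t]$ using the $f$-Laplacian comparison (Lemma \ref{T102}) and the Bakry--\'Emery lower bound $Rc_f\ge-Kg$. Along a Jacobi field vanishing at $0$, the logarithmic derivative $(\log|V_u|)'$ satisfies a Riccati-type inequality whose trace is exactly $\Delta_f r$ plus a curvature correction. Integrating against test sections on the unit sphere at $q$ and applying Lemma \ref{T102} yields a pointwise bound by $(m-1)\tau^{-1}+(m-1)\sqrt{K}+F(2D)$ on $[\delta l,l]$, while the Ricci term contributes at most $K|V_u|$ after one integration. An application of the Cauchy--Schwarz inequality to the resulting integral inequality---combining the Laplacian part with the curvature part in $L^2$ rather than $L^1$---produces the exponent $C_5\sqrt{t-\tau}$ rather than the cruder $C_5(t-\tau)$, and bookkeeping of constants recovers the stated form of $C_5$.

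The main obstacle is the extraction of $\sqrt{s}$ rather than $s$ in the exponent: a direct Gr\"onwall argument from $|V_u'|\le \phi(\tau)|V_u|$ with integrable $\phi$ only yields $\exp(\int\phi)$, whereas the $\sqrt{s}$ requires treating the Riccati--Jacobi inequality in $L^2$ and then collapsing with Cauchy--Schwarz, exactly the shape of $C_5$ telegraphs. A secondary technicality, which I would handle at the start of stage two, is verifying that the connecting family $\gamma_u$ can be chosen to avoid $\mathcal{C}_q$ for $r$ below a threshold depending on $m,F,K,D,\delta,l$; this follows from the openness of non-cut points combined with compactness of $\gamma([\delta l,l-\delta l])$.
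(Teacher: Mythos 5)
Your stages 1 and 2 are essentially the paper's setup: restrict to small $r$ so that cut-locus issues disappear (the paper in fact chooses $r$ so small that $B(\gamma(t),r)$ misses $\mathcal{C}_q$ entirely, which gives the full ball rather than an a.e.\ statement, but your null-set reduction would also suffice for the measure identity), and then bound $d(\psi_s(y),\gamma(t-s))$ by integrating the norm of a Jacobi field $J$ vanishing at $q$ over a family of minimal geodesics from $q$ interpolating between $\gamma$ and the geodesic through $y$. Up to that point you and the paper agree.

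The genuine gap is in stage 3. You claim that the $f$-Laplacian comparison (Lemma~\ref{T102}) yields a \emph{pointwise} bound on $(\log|V_u|)'=Hess_{d_q}(V_u,V_u)/|V_u|^2$ by $(m-1)\tau^{-1}+(m-1)\sqrt{K}+F(2D)$. This is false: under a Bakry-\'Emery Ricci lower bound the comparison controls only the trace $\Delta_f d_q$, not the largest eigenvalue of $Hess_{d_q}$, and at interior points of a minimal geodesic individual eigenvalues of $Hess_{d_q}$ can be arbitrarily large while the trace stays bounded (this is exactly why all Cheeger--Colding/Colding--Naber Hessian estimates are integral, not pointwise). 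Consequently your ``$L^2$ treatment of the Riccati inequality plus Cauchy--Schwarz'' has no input of the right strength, and the factor $\exp(C_5\sqrt{s})$ cannot be recovered by bookkeeping. What is actually needed, and what the paper proves as its key Claim, is an $L^2$-in-arclength bound on the full Hessian norm: one writes $\bigl|\frac{d}{ds}\log|J|^2\bigr|\le 2|Hess_{d_q}|$, applies Cauchy--Schwarz in $s$, and bounds $\int_{\delta l/2}^{(1-\delta/2)l}|Hess_{d_q}|^2$ along the geodesic by integrating the Bochner formula \eqref{E200} with $u=d_q$ (so that $|Hess_{d_q}|^2\le K-\partial_s\bigl(\Delta_f d_q(\gamma(s))\bigr)$ up to constants) and controlling the resulting boundary terms $\Delta_f d_q$ at both ends: from above near $q$ by Lemma~\ref{T102}, and from below near the far end by exploiting minimality of $\gamma$, namely $\Delta_f(d_q+d_{q'})\ge 0$ on $\gamma$ together with the comparison applied to $d_{q'}$. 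This two-sided Laplacian control at the endpoints, fed through the Bochner identity, is what produces the constant $C_5$ (including the $(1-\delta)Kl$ and $4(m-1)(\delta l)^{-1}$ terms); none of it appears in your proposal, so the central estimate remains unproved.
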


\begin{proof}
Let $\iota(x)$ denote the injectivity radius of $x\in M$, and set 
\begin{align*}
r\ \le\ \frac{1}{10}\min\left\{\delta l, \iota(\gamma(t))\right\},
\end{align*}
such that $B(\gamma(t),r)\cap \mathcal{C}_q=\emptyset$, and that for any $x\in
B(\gamma(t),r)$, the minimal geodesic starting from $q$ and passing $x$ can be
extended at least up to $(1-\frac{1}{4}\delta)l$. Since $\gamma$ is a minimal
geodesic, $\gamma([0,l])$ is compact and $\iota$ is positive and continuous on
$M$, we see $r>0$.

For any $x\in B(\gamma(t),r)$, there exists a unique tangent vector $v\in
T_{\gamma(t)}M$ such that $\exp_{\gamma(t)}v=x$. Since $(u,s)\mapsto
\exp_qs(\dot{\gamma}(0)+u\vec{v})$ is a variation by geodesics and thus 
$J(u,s):=\partial_u \exp_qs(\dot{\gamma}(0)+u\vec{v})$ is a Jacobi field along
the geodesic $\gamma_u:s\mapsto \exp_qs(\dot{\gamma}(0)+u\vec{v})$.

On the other hand, since $u\mapsto \exp_q(t-s)(\dot{\gamma}(0)+u\vec{v})$ is a
curve connecting $\gamma(t-s)$ to $\psi_s(x)$, we have 
$d(\psi_s(x),\gamma(t-s))\le \int_0^1|J(u,t-s)|\ \text{d}u$, and it then
suffices to bound $|J|(u,t-s)$ in terms of $|J|(u,t)=|\vec{v}|=d(x,\gamma(t))$.

In order to estimate $|J|$, we notice that
$\frac{\text{d}}{\text{d}t}|J|^2=2Hess_{d_q}(J,J)$, and thus
\begin{align*}
\left|\frac{\text{d}}{\text{d}s}\log |J|^2\right|\ \le\ 2|Hess_{d_q}|.
\end{align*}
Integrating from any $s_1\in [0,t-\delta l]$ to $t$ we see
\begin{align*}
\left|\log\frac{|J|^2(u,t)}{|J|^2(u,t-s_1)} \right|\ \le\
\int_{t-s_1}^t|Hess_{d_q}|(\gamma_u(s))\ \text{d}s\ \le\
\left(\int_{\frac{\delta l}{2}}^{(1-\frac{\delta}{2})l}
|Hess_{d_q}|^2(\gamma_u(s))\ \text{d}s\right)^{\frac{1}{2}}\sqrt{s}.
\end{align*}
Therefore, $\forall s\in [0,t-\delta l]$, integrating $u$ we have
\begin{align}
d(\psi_s(x),\gamma(t-s))\ \le\
\exp \left(s \int_{\frac{\delta l}{2}}^{(1-\frac{\delta}{2})l}|Hess_{d_q}|^2
(\gamma(u))\ \text{d}u\right)^{\frac{1}{2}} d(x,\gamma(t)),
\end{align}
and the following claim guarantees that $B(x,r)\subset H^t_r$, whence the
weighted volume estimate.

\emph{Claim}:
For the given $\gamma$, we have $\int_{\frac{\delta
l}{2}}^{(1-\frac{\delta}{2})l} |Hess_{d_p}|^2(\gamma(s))\ \text{d}s \le
4C_5^2$, with $D:= \sup_{t\in [0,l]}d(p,\gamma(t))$.

\noindent \emph{Proof of claim:}
By Theorem~\ref{T102} we have
\begin{align*}
\Delta_fd_q\ \le\ \frac{m-1}{d_q}+(m-1)\sqrt{K}+F(2D)\quad\text{and}\quad
\Delta_fd_{q'}\ \le\ \frac{m-1}{d_{q'}}+(m-1)\sqrt{K}+F(2D).
\end{align*}

Since the function $(d_q+d_{q'})$ attains a smooth minimum on
$\gamma$, we see $\Delta_f(d_q+d_{q'})(\gamma(t))\ge 0$; we also notice 
that $\nabla d_q(\gamma(t))=-\nabla d_{q'}(\gamma(t))$. Therefore, we have
\begin{align*}
\Delta_fd_q(\gamma(t))\ \ge\ -\Delta_fd_{q'}(\gamma(t))\ \ge\ 
-\frac{m-1}{d_{q'}(\gamma(t))}-(m-1)\sqrt{K}-F(2D),
\end{align*} and thus
\begin{align*}
\max\left\{|\Delta_fd_q|(\gamma(\delta l)),|\Delta_f
d_q|(\gamma(l-\delta l))\right\}\ \le\ \frac{2(m-1)}{\delta
l}+(m-1)\sqrt{K}+F(2D).
\end{align*}

Plugging $u=d_q$ in to the Bochner formula \eqref{E200}, we immediately have:
\begin{align*}
\begin{split}
\forall t\in [\frac{\delta l}{2},l-\frac{\delta}{2} l],\quad
0\ =\ \Delta_f|\nabla d_q|^2(\gamma(t))\ 
\ge\
2|Hess_{d_p}|^2(\gamma(t))+2\partial_t\left(\Delta_fd_q(\gamma(t))\right)-K.
\end{split}
\end{align*}
Therefore, integrating $t\in [\frac{\delta l}{2},l-\frac{\delta}{2} l]$ we
obtain
\begin{align}
\begin{split}
\int_{\frac{\delta l}{2}}^{(1-\frac{\delta}{2}) l}|Hess_{d_q}|^2\ \text{d}t\ 
\le\ &\frac{4(m-1)}{\delta l}+2(m-1)\sqrt{K}+2F(2D)+(1-\delta)Kl,
\end{split}
\label{eqn: Hessian_on_line}
\end{align}
whence the desired $L^2$-estimate.
\end{proof}

From the proof of the proposition, we could clearly see that $H^t_r$ is
determined by the specific $M$ and $\gamma$, rather than a uniform neighborhood
that we would like to get. In fact, it is impossible to get such neighborhood
in a uniform way; however, in the sequel, we will see that there is a
sufficiently large (in volume) set, which is not necessarily a neighborhood of
$\gamma$, but which resembles the key property of $H_r^t$:
the gradient flow lines of $-\nabla d_q$ with initial data in this set does
\emph{not} spread too far away from $\gamma$. Moreover, this set is defined
analytically and its properties depend on the estimates uniformly.

Define
\begin{align*}
\mathcal{B}_s^t(\alpha, r)\ :=\ \{z\in B(\gamma(t),r):\ \forall u\in [0,s l],\
\psi_u(z)\in B(\gamma(t-u),(1+\alpha)r)\}.
\end{align*}

Clearly, $\mathcal{B}_0^t(\alpha, r)=B(\gamma(t),r)$, since $\psi_0$ is the
identity map; by the continuity of $-\nabla d_q$ outside the cut-locus of $q$,
we know that there is a small $\varepsilon>0$ such that, 
\begin{align}
\forall s\in [0,\varepsilon l],\quad
\frac{\mu_f(\mathcal{B}_s^t(\alpha, r))}{\mu_f(B(\gamma(t-s),r))}\ \ge\
\frac{1}{2}.
\label{eqn: volume_estimate}
\end{align}
Clearly, this $\varepsilon$ may vary from one specific manifold to another, it
may also depend on $r$. 

However, let this $\varepsilon$ be chosen as the maximal possible value that
satisfies (\ref{eqn: volume_estimate}), and we will show its irrelevance of
specific manifolds and $r$ provided $r\le r_0$, some fixed constant.

 Now let $c_s^t$ be the characteristic function of
$\mathcal{B}_s^t(\alpha, r)\times \mathcal{B}_s^t(\alpha, r)$ in
$B(\gamma(t),r)\times B(\gamma(t),r)$, then for any $s\in [0,\varepsilon l]$
and $\eta\in (0,1\slash 2)$, we define with the $c\in [\frac{1}{2},2]$ obtained
in Lemma~\ref{L303},
\begin{align}
\mathcal{F}^r_{s}(x,y)\ :=\ &\int_0^{s}c_u^t(x,y)
\left(\int_{\gamma_{\psi_u(x),\psi_u(y)}}|Hess_{h_{cr^2}}|\right)\ \text{d}u,
\label{eqn: Fxy}\\
I_{s}^t(r)\ :=\ &\aint_{B(\gamma(t),r)\times
B(\gamma(t),r)}\mathcal{F}_{s}^r(x,y)\ \dmu_f(x)\dmu_f(y), 
\label{eqn: int_I}
\end{align}
\begin{align}\label{eqn: Tr}
T_{\eta}^r := \left\{x\in B(\gamma(t),r):\ e_{q,q'}(x)\le C_{Ex}
r^2(\eta\delta l)^{-1},\ \text{and}\ \aint_{
B(\gamma(t),r)}\mathcal{F}_{\varepsilon l}^r(x,y)\ \dmu_f(y)\ \le\
\eta^{-1}I_{\varepsilon l}^t(r)\right\},
\end{align}
where $C_{Ex}$ is the consant in Theorem~\ref{T301}, and for each $x\in
T_{\eta}^r$ we define
\begin{align}\label{eqn: Trx}
T_{\eta}^r(x) := \left\{y\in B(\gamma(t),r):\
e_{q,q'}(y)\le C_{Ex} r^2(\eta\delta l)^{-1},\ \text{and}\
\mathcal{F}_{\varepsilon l}^r(x,y)\ \le\ \eta^{-2}I_{\varepsilon
l}^t(r)\right\}.
\end{align}
By the excess function estimate in Theorem~\ref{T301} and Chebyshev's
inequality, we have
\begin{align}
\frac{\mu_f(T_{\eta}^r)}{\mu_f(B(\gamma(t),r))}\ \ge\
1-2\eta,\quad\text{and}\quad \forall x\in T_{\eta}^r,\
\frac{\mu_f(T_{\eta}^r(x))}{\mu_f(B(\gamma(t),r))}\ \ge\ 1-2\eta.
\label{eqn: Chebyshev}
\end{align}
Notice that these estimates are \emph{uniform}. Now we come to the following
\begin{lem}
Fix $\alpha,\xi\in (0,1\slash 20)$ and $\eta \in (0,1\slash 100)$. 
There is an $\varepsilon_0 = \varepsilon_0(\eta\ |\ \alpha, m,F,K,D,\delta)$
and a $C_6=C_6(m,F,K,D,\delta)$ such that whenever $\varepsilon <
\varepsilon_0$, then for any fixed $r\in (0,\delta l\slash 10)$, once
(\ref{eqn: volume_estimate}) holds on $[t-\varepsilon l,t]$, then $\forall s\in
[t-\varepsilon l,t]$, $\forall x_1\in T_{\eta}^r$ and $\forall x_2\in
T_{\eta}^r(x_1)\cap \mathcal{B}_s^t(\alpha,\xi r)$,
\begin{align}
|d(\psi_s(x_1),\psi_s(x_2))-d(x_1,x_2)|\ \le\
C_6\eta^{-2}r \sqrt{s\slash l}.
\label{eqn: distance_estimate}
\end{align}
Moreover, $x_1\in \mathcal{B}_s^t(\alpha, r)$ for all $s\in [0,\varepsilon l]$.
\label{lem: distance_estimate}	
\end{lem}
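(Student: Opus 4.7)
The plan is to adapt the Colding--Naber parabolic-smoothing argument, using the smoothed distance $h := h^-_{cr^2}$ of Lemma~\ref{L303} as a smooth proxy for $d_q$. Because $x_2 \in \mathcal{B}^t_s(\alpha, \xi r)$ and $x_1$ is, inductively, in the same enlargement, the minimal geodesic $\gamma_u$ from $\psi_u(x_1)$ to $\psi_u(x_2)$ stays inside a slight enlargement of $B(\gamma(t-u), r)$, so all estimates from Lemma~\ref{L303} and Lemma~\ref{L304} are available along $\gamma_u$. The first variation of arclength under the flow of $-\na d_q$, combined with the fundamental theorem of calculus applied to $\langle \na h, \dot\gamma_u\rangle$ and the on-$\gamma$ identity $-\na d_q = \na d^-$, yields a.e.\ in $u$,
\begin{align*}
\bigl|\tfrac{d}{du} d(\psi_u(x_1), \psi_u(x_2))\bigr|\ \le\ \int_{\gamma_u}|\he h|\ +\ \sum_{i=1}^{2}|\na h - \na d^-|(\psi_u(x_i)).
\end{align*}

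Integrating over $u\in[0,s]$ produces three terms. The Hessian term integrates to $\mathcal{F}^r_s(x_1,x_2)$, bounded by $\eta^{-2} I^t_s(r)$ by the definition of $T^r_\eta(x_1)$. To control $I^t_s(r)$, I would apply the segment inequality (Theorem~\ref{T104}) at each time slice after changing variables by $\psi_u$, whose Jacobian is controlled via two-sided $\Delta_f$ estimates on $d_q$ and $d^-$ (Theorem~\ref{T102}); this reduces the ball-to-ball geodesic integral to a single ball-average of $|\he h|$. Combining with the time-integrated Hessian estimate Lemma~\ref{L303}(iv) (and, if needed, a Cauchy--Schwarz upgrade using Lemma~\ref{L200a}) should produce $I^t_s(r) \le C r\sqrt{s/l}$. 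The two gradient-difference terms are then handled by Lemma~\ref{L304}(iii): the excess bound $e_{q,q'}(x_i) \le C_{Ex} r^2/(\eta\delta l)$ built into the definition of $T^r_\eta$ yields $\int_0^s |\na h - \na d^-|(\psi_u(x_i))\,du \le C\eta^{-1/2} r \sqrt{s/l}$. Summing the three contributions delivers (\ref{eqn: distance_estimate}) with the $\eta^{-2}$-dependent constant.

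For the \emph{Moreover} assertion I would run a continuity bootstrap. Let $s_\ast \le \varepsilon l$ be maximal with $\psi_v(x_1) \in B(\gamma(t-v),(1+\alpha)r)$ for all $v\in[0,s_\ast]$. By the Chebyshev-type bounds in (\ref{eqn: Chebyshev}), the set $T^r_\eta(x_1) \cap \mathcal{B}^t_{s_\ast}(\alpha,\xi r) \cap B(\gamma(t),\xi r)$ is nonempty; pick $x_2$ there. Applying (\ref{eqn: distance_estimate}) together with $\psi_{s_\ast}(x_2) \in B(\gamma(t-s_\ast),(1+\alpha)\xi r)$ and $\psi_{s_\ast}(\gamma(t)) = \gamma(t-s_\ast)$, the triangle inequality gives
\begin{align*}
d(\psi_{s_\ast}(x_1),\gamma(t-s_\ast))\ \le\ r + O(\xi r) + C_6\eta^{-2}r\sqrt{\varepsilon},
\end{align*}
and by shrinking $\varepsilon_0 = \varepsilon_0(\eta\,|\,\alpha,\xi,\ldots)$ the right-hand side is strictly less than $(1+\alpha)r$, contradicting maximality unless $s_\ast = \varepsilon l$.

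The principal technical obstacle is the bound on $I^t_s(r)$. Lemma~\ref{L303}(iv) is stated along the fixed minimizer $\sigma$ from $q'$ to $q$, whereas $I^t_s(r)$ averages $|\he h|$ along moving geodesics $\gamma_{\psi_u(x),\psi_u(y)}$ that lie in $B(\gamma(t-u),2(1+\alpha)r)$. Reconciling these---uniformly in $u$ and in $r \le \delta l/10$---demands careful volume-distortion control for $\psi_u$ from the two-sided $\Delta_f$-estimates, combined with a choice of ball radius in the segment inequality that keeps the enlarged ball inside the tube where L303(iv) applies.
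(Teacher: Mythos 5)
Your proof of the estimate (\ref{eqn: distance_estimate}) itself is essentially the paper's argument: integrate the Colding--Naber first-variation inequality, bound the two gradient terms by Lemma \ref{L304}(iii) together with the excess bound built into $T_{\eta}^r$, bound the Hessian term by $\eta^{-2}I^t_s(r)$, and control $I^t_s(r)$ by the segment inequality (Theorem \ref{T104}) plus the Hessian bounds of Lemma \ref{L303} and a Cauchy--Schwarz in $u$. Two caveats on that part. First, the change of variables by $\psi_u$ only requires a \emph{lower} bound on its weighted Jacobian, which follows from the one-sided Laplacian comparison of Theorem \ref{T102}; a ``two-sided $\Delta_f$ estimate'' for $d_q$ off the geodesic does not exist (no lower bound near the cut locus) and is not needed. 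Second, the normalization of the segment-inequality average is exactly where the hypothesis (\ref{eqn: volume_estimate}) enters, via $\mu_f(\mathcal{B}_u^t(\alpha,r))\ge \tfrac12\mu_f(B(\gamma(t-u),r))$ and volume doubling; your sketch never invokes it, but without it the factor $\mu_f(B(\gamma(t-u),2r))\slash\mu_f(\mathcal{B}_u^t(\alpha,r))^2$ is uncontrolled.

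The genuine gap is in your proof of the ``Moreover'' assertion. You try to manufacture an auxiliary point $x_2\in T_{\eta}^r(x_1)\cap\mathcal{B}_{s_\ast}^t(\alpha,\xi r)$ from the Chebyshev bounds (\ref{eqn: Chebyshev}). But (\ref{eqn: Chebyshev}) controls only $T_{\eta}^r$ and $T_{\eta}^r(x_1)$; neither it nor the scale-$r$ hypothesis (\ref{eqn: volume_estimate}) gives \emph{any} lower bound on $\mu_f(\mathcal{B}_{s_\ast}^t(\alpha,\xi r))$, the set of points of the small ball $B(\gamma(t),\xi r)$ whose flow lines stay $(1+\alpha)\xi r$-close to $\gamma$. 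Producing such small-scale controlled points is precisely the content of the subsequent multi-scale argument (Lemma \ref{lem: big_A}, which descends through the scales $\xi^i r$ to the manifold-specific core neighborhood of Lemma \ref{lem: core_nbhd}); it is not available inside this lemma, and even granting a density-$\tfrac12$ bound at scale $\xi r$, your intersection argument would additionally need $\eta\lesssim\xi^m$, which is not among the hypotheses. The intended proof does not construct $x_2$ at all: the ``Moreover'' is proved using the $x_2$ already given in the hypothesis, for which $d(\psi_s(x_2),\gamma(t-s))\le(1+\alpha)\xi r$ is known on the whole range of $s$. One defines the escape time of $x_1$ with a buffered radius ($2r$ in the paper, so that at the escape time $\psi(x_1)$ lies at distance at least $\tfrac32 r$ from $\gamma$, see (\ref{eqn: away})), applies (\ref{eqn: distance_estimate}) up to that time, and the triangle inequality with $d(x_1,x_2)\le(1+\xi)r$ and $\varepsilon_0\sim(1+\alpha)^2\eta^4\slash C_6^2$ traps $\psi_s(x_1)$ strictly inside that buffer, a contradiction. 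The buffer also removes your second quantifier problem: a bootstrap at the exact threshold $(1+\alpha)r$ needs roughly $\xi(2+\alpha)+C_6\eta^{-2}\sqrt{\varepsilon}<\alpha$, i.e.\ $\xi\ll\alpha$, which is not assumed and cannot be arranged by shrinking $\varepsilon_0$ alone.
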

\begin{proof}
Fix any $x_1\in T_{\eta}^r\backslash \mathcal{C}_q$ and denote
\begin{align*}
\varepsilon(x_1):=\ \min\left\{1,\sup\{s\le \varepsilon l:\ \forall u\in
[0,s],\ \psi_{u}(x_1)\in B(\gamma(t-u),2r)\}\right\}.
\end{align*} 
Clearly, when $s\le \varepsilon(x_1)$, $x_1\in \mathcal{B}_s^t(\alpha, r)$;
moreover, $\mathcal{B}_s^t(\alpha, \xi r)\subset \mathcal{B}_s^t(\alpha, r)$.
Therefore $c_s^t(x_1,x_2)=1$. By the continuity of the mapping $u\mapsto
\psi_u(x_1)$, we also see that
\begin{align}
\psi_{\varepsilon(x_1)}(x_1)\not\in B(\gamma(t-\varepsilon(x_1)),\frac{3}{2}r).
\label{eqn: away}
\end{align}
 We will show that $\varepsilon(x_1)= \varepsilon l$ for suitably chosen
$\varepsilon_0$. Now for any $x_2\in (T_{\eta}^r(x_1)\cap
\mathcal{B}_s^t(\alpha, r)) \backslash \mathcal{C}_q$ fixed, we let $\sigma_1$
and $\sigma_2$ denote the integral curves of $-\nabla d_q$ starting from $x_1$
and $x_2$, respectively.
These are minimal geodesics, and integrating (3.6) in~\cite[Lemma 3.4]{CN12}
we get for any $s\le \varepsilon(x_1)$, 
\begin{align}
\begin{split}
|d(\psi_s(x_1),\psi_s(x_2))-d(x_1,x_2)|\ \le\ &\int_0^s|\nabla
h_{cr^2}-\nabla d_q|(\sigma_1(u))\ \text{d}u\\
&+\int_0^s|\nabla h_{cr^2}-\nabla d_q|(\sigma_2(u))\
\text{d}u+\mathcal{F}_{s}^r(x_1,x_2).
\end{split}
\label{eqn: distance_1}
\end{align}
We now estimate each term in the right-hand side of (\ref{eqn: distance_1}). 
By Lemma~\ref{L304} and
the choice of $x_1,x_2$, we see for $i=1,2$,
\begin{align}
\begin{split}
\forall s\in [0,\varepsilon(x_1)],\quad \int_0^s|\nabla h_{cr^2}-\nabla
d_q|(\sigma_i(u))\ \text{d}u\ \le\ 2C_4\eta^{-\frac{1}{2}}
r\sqrt{s\slash l}.
\end{split}
\label{eqn: distance_2}
\end{align}
The last term on the right-hand side of (\ref{eqn: distance_1}) is by
definition bounded by $\eta^{-2}I_s^t(r)$. By the segment inequality of
Theorem \ref{T104} and the definition of $\mathcal{B}_s^t(\alpha, r)$, for any
$s\in [0,\varepsilon(x_1)]$ we could estimate $I_s^t(r)$ as:
\begin{align*}
I_{s}^t(r)\ 
&\le\ \int_0^{s}\left(
\frac{1}{\mu_f(\mathcal{B}_{u}^t(\alpha, r))^2}
\int_{\psi_u(\mathcal{B}_u^t(r))\times \psi_u(\mathcal{B}_u^t(r))}
\left(\int_{\gamma_{x,y}}|Hess_{h_{cr^2}}|\right)\ \dmu_f^2\right)\ \text{d}u\\
&\le\
\int_0^{s}\left(10r\ C_{Seg}
\frac{\mu_f(\psi_u(\mathcal{B}_u^t(\alpha,
r)))}{\mu_f(\mathcal{B}_u^t(\alpha, r))^2}
\int_{B(\gamma(t-s),5r)}|Hess_{h_{cr^2}}|\ \dmu_f\right)\ \text{d}u\\
&\le\ \int_0^{s}\left(10r\ C_{Seg}
\frac{\mu_f(B(\gamma(t-u),2r))}{\mu_f(\mathcal{B}_u^t(\alpha, r))^2}
\int_{B(\gamma(t-s),5r)}|Hess_{h_{cr^2}}|\ \dmu_f\right)\ \text{d}u.
\end{align*}
Moreover, by the volume doubling property (\ref{eqn: volume_doubling}) within
$B(p_0,D)$, assumption (\ref{eqn: volume_estimate}) and Lemma \ref{L303}, we
could continue to estimate: $\forall s\in [0,\varepsilon(x_1)],$
\begin{align}
\begin{split}
 I_s^t(r)\ &\le\ 
\int_0^{s}\left(10r\ C_2C_{Seg} \left(
\frac{\mu_f(B(\gamma(t-u),r))}{\mu_f(\mathcal{B}_u^t(\alpha, r))}\right)^2
\aint_{B(\gamma(t-u),5r)}|Hess_{h_{cr^2}}|\ \dmu_f\right)\ \text{d}u\\
&\le\ 
10r\ C_2C_{Seg}\left(\int_{\delta l}^{l-\delta l}
\aint_{B(\gamma(u),5r)}|Hess_{h_{cr^2}}|^2\
\dmu_f\right)^{\frac{1}{2}}\sqrt{s}\\
&\le\ 10 C_2C_{Seg}\sqrt{C_3}\ r\sqrt{s\slash l},
\end{split}
\label{eqn: distance_3}
\end{align}
Now (\ref{eqn: distance_1}), (\ref{eqn: distance_2}) and
(\ref{eqn: distance_3}) together imply that for almost every $x_1\in
T_{\eta}^r$ and $x_2\in T_{\eta}^r(x_1)\cap \mathcal{B}_s^t(\alpha, r),$
\begin{align}
\forall s\in [0,\varepsilon(x_1)],\quad |d(\psi_s(x_1),
\psi_s(x_2))-d(x_1,x_2)|\ \le\ C_6\eta^{-2}r \sqrt{s\slash l},
\label{eqn: distance_2}
\end{align}
where $C_6= C_6(m,F,K,D,\delta):= 4C_4+10C_2C_{Seg}\sqrt{C_3}$. Here we
emphasize that in proving this estimate we only need $x_1\in T_{\eta}^r\cap
\mathcal{B}_s^t(\alpha, r)$ and $x_2\in T_{\eta}^r(x_1)\cap \mathcal{B}_s^t(r)$.
The stronger assumption that $x_2 \in \mathcal{B}_s^t(\alpha, \xi r)$ is not
needed yet.

Now we put $\varepsilon_0:=(1+\alpha)^2 \eta^4\slash (16C_6^2)$. Suppose, for
the purpose of a contradiction argument, that $\varepsilon(x_1)<
\varepsilon l\le \varepsilon_0 l$, then since $x_2\in
\mathcal{B}_s^t(\alpha, \xi r)$, we have $d(\psi_s(x_2),\gamma(t-s))\le
(1+\alpha)\xi r$ whenever $s\in [0,\varepsilon(x_1)]$, and the
triangle inequality implies that
\begin{align}\label{eqn: distance_3}
d(\psi_{\varepsilon(x_1)}(x_1),\gamma(t-\varepsilon(x_1)))\ \le\
(\frac{1}{4}+\xi)(1+\alpha)r,
\end{align}
whence a desired contradiction to (\ref{eqn: away}).

Therefore $\varepsilon(x_1)=\varepsilon l$, and (\ref{eqn: distance_2}) is
valid for all $s\in [0,\varepsilon l]$. Especially, this is (\ref{eqn:
distance_estimate}) holding for all $s\in[0,\varepsilon l]$, as claimed by the
lemma.
\end{proof}

\begin{remark}
Let us emphasis that the estimate (\ref{eqn: distance_estimate}) depends on
(\ref{eqn: volume_estimate}) whose range of validity depends
on the specific manifold, geodesic and scale $r$. But with these estimates we
are now ready to remove such dependence of $\varepsilon$ in (\ref{eqn:
volume_estimate}).
\end{remark}

On the other hand, we could consider the gradient flow $\psi_s'$ 
generated by the almost everywhere defined vector field $-\nabla d_{q'}$, with
$d_{q'}:M\to [0,\infty)$ denoting the geodesic distance to $q'\in M$. We could
define $\mathcal{B}_s^t(\alpha, r)'$ as following
\begin{align*}
\mathcal{B}_s^t(\alpha, r)'\ :=\ \{z\in B(\gamma(t-s),r):\ \forall u\in [0,s
l],\ \psi'_u(z)\in B(\gamma(t+u),(1+\alpha)r)\}.
\end{align*}
By the symmetry of $\gamma$, we could apply Lemma~\ref{lem: distance_estimate}
to see that as long as 
\begin{align}\label{eqn: volume_estimate'}
\forall s\in [0,\varepsilon l],\quad
\frac{\mu_f(\mathcal{B}_s^t(\alpha,r)')}{\mu_f(B(\gamma(t),r))}\ \ge\
\frac{1}{2},
\end{align}
then $\forall x_1\in (T_{\eta}^r)'$ and $\forall x_2\in T_{\eta}^r(x_1)'\cap
\mathcal{B}_s^t(\alpha,\xi r)'$, 
\begin{align}\label{eqn: distance_estimate'}
\forall s\in [0,\varepsilon l],\quad |d(\psi'_s(x_1),\psi'_2(x_2))-d(x_1,x_2)|\
\le\ C_6\eta^{-2}r\sqrt{s\slash l},
\end{align}
where $(T_{\eta}^r)',T_{\eta}^r(x_1)'\subset B(\gamma(t-s),r)$ are defined in
the same way as $T_{\eta}^r$ and $T_{\eta}^r(x_1)$ in (\ref{eqn: Tr}) and
(\ref{eqn: Trx}) respectively, but with $h'_{cr^2}$ --- the parabolic smoothing
of $-d_{q'}$ --- replacing $h_{cr^2}$ in (\ref{eqn: Fxy}) and (\ref{eqn:
int_I}).
\begin{lem}
There exists a small $\varepsilon_1=\varepsilon_1(\eta\ |m,F,K,D,\delta)>0$
such that fixing $r\in (0,\delta l \slash 10)$, the estimates 
(\ref{eqn: volume_estimate}) and (\ref{eqn: volume_estimate'}) hold for some
$\varepsilon \ge \varepsilon_1$.
In fact, we have
$(T_{\eta}^r\backslash (\mathcal{C}_q\cup \mathcal{C}_{q'}))\subset
\mathcal{B}_s^t(\alpha,r)$ and $((T_{\eta}^r)'\backslash
(\mathcal{C}_q\cup \mathcal{C}_{q'}))\subset \mathcal{B}_s^t(\alpha,r)'$
whenever $\forall s\in [0,\varepsilon_1 l]$.
\label{lem: big_A}
\end{lem}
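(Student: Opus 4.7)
I prove the conclusion for $\mathcal{B}_s^t(\alpha,r)$; the assertion for the primed version follows symmetrically by reversing the parameter along $\gamma$ and exchanging $q \leftrightarrow q'$. The idea is a continuity / open--closed argument on the range of $s$ for which (\ref{eqn: volume_estimate}) holds, bootstrapping the weak threshold $\frac{1}{2}$ into the much stronger bound $(1-2\eta)$ via Lemma \ref{lem: distance_estimate}.

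Set $\varepsilon^\ast := \sup\{\varepsilon \in (0,\varepsilon_0) : \text{(\ref{eqn: volume_estimate}) holds on } [0,\varepsilon l]\}$, with $\varepsilon_0$ as in Lemma \ref{lem: distance_estimate} applied to the fixed $\alpha,\xi,\eta$. Since $\mathcal{B}_0^t(\alpha,r)=B(\gamma(t),r)$ and $\mu_f(\mathcal{C}_q)=0$, continuity of $\psi_u$ off $\mathcal{C}_q$ together with continuity of $u\mapsto \mu_f(B(\gamma(t-u),r))$ give $\varepsilon^\ast>0$. For each $s\in[0,\varepsilon^\ast l]$, the hypothesis (\ref{eqn: volume_estimate}) is in force, so Lemma \ref{lem: distance_estimate} applies, and its \emph{Moreover} clause yields $x_1\in \mathcal{B}_s^t(\alpha,r)$ for every $x_1\in T_\eta^r\setminus\mathcal{C}_q$. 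Thus $T_\eta^r\setminus\mathcal{C}_q\subset \mathcal{B}_s^t(\alpha,r)$, and combining with (\ref{eqn: Chebyshev}) gives
\begin{align*}
\mu_f(\mathcal{B}_s^t(\alpha,r)) \ \ge\ (1-2\eta)\,\mu_f(B(\gamma(t),r)).
\end{align*}

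To reinstate the correct denominator, apply Theorem \ref{T101} to the inclusion $B(\gamma(t-s),r)\subset B(\gamma(t),r+s)$ to obtain
\begin{align*}
\frac{\mu_f(B(\gamma(t-s),r))}{\mu_f(B(\gamma(t),r))} \ \le\ \exp\bigl(F(D)(r+s)\bigr)\,\frac{Vol_K^m(r+s)}{Vol_K^m(r)}.
\end{align*}
Pick $\varepsilon_1\in(0,\varepsilon_0)$, depending only on $\eta,m,F,K,D,\delta$, so that the right-hand side is bounded by some $\kappa<2(1-2\eta)$ whenever $s\le\varepsilon_1 l$ (possible since $\eta<1/100$). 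If $\varepsilon^\ast<\varepsilon_1$, then on the whole interval $[0,\varepsilon^\ast l]$ the ratio in (\ref{eqn: volume_estimate}) is at least $(1-2\eta)/\kappa>1/2$ with uniform margin. Monotonicity of $s\mapsto \mu_f(\mathcal{B}_s^t(\alpha,r))$ (the sets shrink in $s$) and continuity of $s\mapsto \mu_f(B(\gamma(t-s),r))$ then force this strict inequality to persist slightly past $\varepsilon^\ast l$, contradicting maximality. Hence $\varepsilon^\ast\ge\varepsilon_1$, and the ``in fact'' inclusion is the \emph{Moreover} clause of Lemma \ref{lem: distance_estimate} applied on the full interval $[0,\varepsilon_1 l]$.

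The principal technical obstacle is controlling the volume-comparison factor uniformly over the entire range $r\in(0,\delta l/10)$: the ratio $Vol_K^m(r+s)/Vol_K^m(r)$ behaves like $(1+s/r)^m$ and hence blows up when $s$ is not small compared to $r$, so the naive choice of $\varepsilon_1$ would depend on a lower bound for $r$. To genuinely remove this dependence one should either run the continuity argument with $\varepsilon_1$ calibrated to the ratio $s/r$ (which is legitimate because the distance spread produced by Lemma \ref{lem: distance_estimate} is proportional to $r$, so the whole flow picture is scale-invariant), or combine the above bootstrap with the explicit small-scale core-neighborhood construction $H_r^t$ from Lemma \ref{lem: core_nbhd} on a cofinal sequence of radii to handle the small-$r$ regime separately.
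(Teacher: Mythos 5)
Your overall skeleton---taking the maximal $\varepsilon$ for which the volume estimate holds and deriving a contradiction when it falls below a uniform threshold---matches the paper's Step 3, but two essential ingredients are missing, and both are genuine gaps. First, you invoke the \emph{Moreover} clause of Lemma \ref{lem: distance_estimate} for every $x_1\in T_{\eta}^r\setminus\mathcal{C}_q$ as if it were unconditional. It is not: in the proof of that lemma the conclusion $x_1\in\mathcal{B}_s^t(\alpha,r)$ is obtained by contradicting (\ref{eqn: away}), and that contradiction uses a companion point $x_2\in T_{\eta}^r(x_1)\cap\mathcal{B}_s^t(\alpha,\xi r)$, i.e.\ a point at the finer scale $\xi r$ already known to stay $(1+\alpha)\xi r$-close to $\gamma$ under the flow. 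Producing such companions is precisely what Steps 1--2 of the paper's proof do: using (\ref{eqn: Chebyshev}) and volume comparison one selects a chain $x_{i+1}\in T_{\eta}^{r_i}(x_i)\cap T_{\eta}^{r_{i+1}}$ with $r_i=\xi^i r$ descending to the manifold-dependent core scale $r'$ of Lemma \ref{lem: core_nbhd}, where the flow control holds for $s\le\varepsilon_1'l$ with $\varepsilon_1'=(\ln(1+\alpha)\slash C_5)^2\delta$, and then propagates the membership $x_i\in\mathcal{B}_s^t(\alpha,r_i)$ back up the scales to reach $x_0$. Without this multiscale chaining, the inclusion $T_{\eta}^r\setminus(\mathcal{C}_q\cup\mathcal{C}_{q'})\subset\mathcal{B}_s^t(\alpha,r)$---the heart of the lemma---is not established, and your subsequent volume bound $(1-2\eta)\mu_f(B(\gamma(t),r))$ has no justification.

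Second, your handling of the shifted denominator $\mu_f(B(\gamma(t-s),r))$ via $B(\gamma(t-s),r)\subset B(\gamma(t),r+s)$ fails for exactly the reason you name yourself: the factor $Vol_K^m(r+s)\slash Vol_K^m(r)\sim(1+s\slash r)^m$ forces $\varepsilon_1$ to depend on a lower bound for $r$, whereas the entire point of the lemma is that $\varepsilon_1$ depends only on $\eta,m,F,K,D,\delta$, uniformly in $r$ (and in $M$ and $\gamma$). Neither of your sketched remedies is carried out, and neither coincides with what the paper does: the two conditions (\ref{eqn: volume_estimate}) and (\ref{eqn: volume_estimate'}) are genuinely coupled (so handling the primed case ``separately by symmetry'' discards the needed tool), and at the contradiction endpoint the paper transports the volume of $(T_{\eta}^r)'\subset B(\gamma(t-\varepsilon l),r)$ forward by the flow $\psi'$ of $-\nabla d_{q'}$ into $B(\gamma(t),(1+\alpha)r)$, using the inclusion $(T_{\eta}^r)'\setminus(\mathcal{C}_q\cup\mathcal{C}_{q'})\subset\mathcal{B}_{\varepsilon l}^t(\alpha,r)'$, the translated area comparison (\ref{eqn: area_comparison_translated}) (through the choice of $\varepsilon''$), and volume doubling. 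This produces the lower bound $\frac{9(1-\eta)^2}{10(1+C(m,F,K,D)\alpha)^m}>\frac{1}{2}$ after $\alpha$ is chosen small, with no $r$-dependence, which is what actually closes the contradiction; your argument, as written, does not.
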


\begin{proof}
Fix $r\in (0,\delta l \slash 10)$, and let $\varepsilon$ be the largest possible
such number that both (\ref{eqn: volume_estimate}) and (\ref{eqn:
volume_estimate'}) hold. Again, this $\varepsilon$ is positive but its value
depends on the specific $M$ and $\gamma$. We will choose an $\varepsilon_1$
depending only on $m,F,K,D,\delta$ and $\eta$ such that were
$\varepsilon<\varepsilon_1$ to hold then a contradiction will be deduced.

\emph{\textbf{Step 1: Connecting to the good core neighborhood.}}
Recall that Lemma~\ref{lem: core_nbhd} tells that there are a small
$r'=r'(M,\gamma)>0$ and a core neighborhood $H_{r'}^t= B(\gamma(t),r')$, such
that it stays close to $\gamma$ under the geodesic flow. Let us now fix this
neighborhood of $\gamma(t)$, which depends on specific $M$ and $\gamma$. Notice
that if we set $\varepsilon_1':=(\ln (1+\alpha)\slash C_5)^2\delta $, then by
the definition of $H_{r'}^t$ and the proof of Lemma~\ref{lem: core_nbhd}, we have
\begin{align}
\forall s\in [0,\varepsilon_1'l],\ \forall x\in H_{r'}^t,\quad
d(\psi_s(x),\gamma(t+s))\ \le\ (1+\alpha)d(x,\gamma(t)).
\label{eqn: core_inclusion}
\end{align} 

We also let $\xi$ be some small positive number, say $\xi = \frac{1}{20}$, and
 let $r_i:=\xi^{i}r$ for $i=0,1,2,\ldots,I$, where
 $I:=\left\lceil \log_{\xi}\frac{r'}{2r}\right\rceil$ is defined to be the first
 natural number such that $r_I\le r'\slash 2$.

Now for an arbitrary $x_0\in T_{\eta}^r\backslash (\mathcal{C}_q\cup
\mathcal{C}_{q'})$ fixed, our plan is to connect it to $H_{r'}^t$ by selecting
$\{x_i\}_{i=0}^{I}$ inductively: suppose $x_i$ is chosen, then pick any
$x_{i+i}\in (T_{\eta}^{r_i}(x_i)\cap T_{\eta}^{r_{i+1}})\backslash
\mathcal{C}_q$. This is doable because (\ref{eqn: Chebyshev}) is independent of
$r$: as long as we choose $\eta\le (C(m,F,K,D)\xi\slash 4)^m$ with $C(m,F,K,D)$
coming from the volume comarison (Theorem~\ref{T101}), then we have
\begin{align*}
\mu_f(T_{\eta}^{r_i}(x_i))+\mu_f(T_{\eta}^{r_{i+1}})\ &\ge\
(1-2\eta)\left(\mu_f(B(\gamma(t),r_i))+\mu_f(B(\gamma(t),r_{i+1}))\right)\\
&\ge\ (1-2\eta)(1+C(m,F,K,D)^m\xi^m)\mu_f(B(\gamma(t),r_i))\\
&>\ \mu_f(B(\gamma(t),r_i)),
\end{align*}
i.e. $T_{\eta}^{r_i}(x_i)\cap T_{\eta}^{r_{i+1}}$ has positive weighted volume,
and especially a non-empty intersection outside the cut-locus of $q$. We denote
by $\sigma_i$ the integral curve of $-\nabla d_q$ with initial value $x_i$.
Each $\sigma_i$ is a minimal geodesics.

\emph{\textbf{Step 2: Estimating the distance.}}
According to (\ref{eqn: core_inclusion}), $x_I\in \mathcal{B}_s^t(\alpha,r_I)$
whenever $s\in [0,\varepsilon_1'l]$. Therefore, applying Lemma~\ref{lem:
distance_estimate} to $x_{I-1}\in T_{\eta}^{r_{I-1}}$ and $x_{I}\in
T_{\eta}^{r_{I-1}}(x_{I-1})\cap \mathcal{B}_s^t(\alpha,r_I)$, by (\ref{eqn:
distance_3}) we have
\begin{align*}
\forall s\in [0,\min\{\varepsilon l,\varepsilon_0 l,\varepsilon_1'l\}],\quad
d(\psi_s(x_{I}),\psi_s(x_{I-1}))\ \le\
\left(\frac{1}{4}+\xi\right)(1+\alpha)r_{I-1}.
\end{align*}
This further implies that for any $s\le \min\{\varepsilon
l,\varepsilon_0 l,\varepsilon_1'l\}$,
\begin{align}
\begin{split}
d(\psi_s(x_{I-1}),\gamma(t-s))\ &\le\
d(\psi_s(x_I),\gamma(t-s))+\left(\frac{5}{4}+\xi\right)r_{I-1}\\
&\le\ \left(\frac{1}{4}+2\xi\right)(1+\alpha)r_{I-1}.
\end{split}
\label{eqn: distance_I}
\end{align}
Especially, $x_{I-1}\in \mathcal{B}_s^t(\alpha,r_{I-1})$ whenever $s\le
\min\{\varepsilon l,\varepsilon_0 l,\varepsilon_1'l\}$.

We could now apply Lemma~\ref{lem: distance_estimate} to the pair of points
$x_{I-2}$ and $x_{I-1}$, and conclude that $x_{I-2}\in
\mathcal{B}_s^t(\alpha, r_{I-2})$ whenever $s\le \min\{\varepsilon
l,\varepsilon_0 l,\varepsilon_1'l\}$.
Repeating the same argument another $I-2$ steps, we will get for any $s\le
\min\{\varepsilon l,\varepsilon_0 l,\varepsilon_1'l\}$,
\begin{align*}
d(\psi_s(x_0),\gamma(t-s))\ \le\
&d(\psi_s(x_I),\gamma(t-s))+\left(\frac{1}{4}+\xi\right)(1+\alpha)r
\sum_{i=0}^{I-1}\xi^i\\
<\ &(1+\alpha)r,
\end{align*}
by the choice of $\xi$. Especially, this implies that $x_0\in
\mathcal{B}_s^t(\alpha, r)$ whenever $s\le \min\{\varepsilon l,\varepsilon_0
l,\varepsilon_1'l\}$. By (\ref{eqn: distance_estimate'}) and the same reasoning,
we see that $(T_{\eta}^r)'\backslash (\mathcal{C}_q\cup \mathcal{C}_{q'})\subset
\mathcal{B}_s^t(\alpha, r)'$ whenever $s\le \min\{\varepsilon l,\varepsilon_0
l,\varepsilon_1'l\}$.

\emph{\textbf{Step 3: Bounding $\varepsilon$ from below.}}
Let $\varepsilon''\in (0,\delta\slash 10)$ be the largest constant
satisfying
\begin{align*}
\forall s\in [0,\varepsilon''l],\ \forall \delta l\le a< b\le
(1-\delta)l,\quad
\int_{a+s}^{b+s}e^{F(2D)u}\mathcal{A}_K^{m-1}(u)\ \text{d}u\ \le\ \frac{10}{9}
\int_{a}^{b}e^{F(2D)u}\mathcal{A}_K^{m-1}(u)\ \text{d}u,
\end{align*}
then clearly $\varepsilon''$ is determined by $m$, $F$, $K$, $D$ and $\delta$. 
Setting $\varepsilon_1:=\min\{\varepsilon_0,\varepsilon_1',\varepsilon''\}$, we
show that whenever $\eta\le 100^{-m}$, $\varepsilon\ge \varepsilon_1$ by a
contradiction argument:

Otherwise, notice that $\mu_f(\mathcal{B}_{s}^t(\alpha, r))\slash
\mu_f(B(\gamma(t-s),r))$ varies continuously with respect to $s$, then by
(\ref{eqn: volume_estimate}), (\ref{eqn: volume_estimate'}) and the maximality
of $\varepsilon$, we have 
\begin{align}\label{eqn: contradiction_main}
\frac{\mu_f(\mathcal{B}_{\varepsilon
l}^t(\alpha, r))}{\mu_f(B(\gamma(t-\varepsilon l),r))} = \frac{1}{2}\quad
\text{or}\quad \frac{\mu_f(\mathcal{B}_{\varepsilon
l}^t(\alpha, r)')}{\mu_f(B(\gamma(t),r))} = \frac{1}{2}.
\end{align}  
Now suppose it is the first case. Since $\varepsilon <\varepsilon_1$, we have
$(T_{\eta}^r\backslash (\mathcal{C}_q\cup \mathcal{C}_{q'}))\subset
\mathcal{B}_{\varepsilon l}^t(\alpha,r)$, and thus 
\begin{align*}
\frac{\mu_f(\mathcal{B}_{\varepsilon l}^t(\alpha,
r))}{\mu_f(B(\gamma(t-\varepsilon l),r))}\ &\ge\ (1-\eta)
\frac{\mu_f(T_{\eta}^r)}{\mu_f((T_{\eta}^r)')}\\
&\ge\
\frac{9(1-\eta)^2}{10}
\frac{\mu_f(B(\gamma(t),r)}{\mu_f(\psi'_s((T_{\eta}^r)'))}\\
&\ge\ \frac{9(1-\eta)^2}{10}
\frac{\mu_f(B(\gamma(t),r)}{\mu_f(B(\gamma(t),(1+\alpha)r))}\\
&\ge\ \frac{9(1-\eta)^2}{10(1+C(m,F,K,D)\alpha)^m},
\end{align*}
where $C(m,F,K,D)>0$ is a constant determined by the volume comparison in
Theorem~\ref{T101}. Here the second inequality holds thanks to the comaprison
(\ref{eqn: area_comparison_translated}) --- especially, notice that we are
following the gradient flow $\psi_s'$ of $-\nabla d_{q'}$. Compare the proof of
the segment inequality and $\alpha\in (0,1)$ can be chosen (depending on
$C(m,F,K,D)$) so that $(1-\eta)^2(1+C(m,F,K,D)\alpha)^{-m} \ge\frac{5}{6}$, and
this leads to a contradiction. Similar contradiction could also be deduced if
the second case of (\ref{eqn: contradiction_main}) were assumed. Therefore
$\varepsilon\ge \varepsilon_1$, a constant \emph{solely} determined by
$m,F,K,D,\delta$ and $\eta$. We notice here that $\lim_{\eta\to 0}
\varepsilon_1(\eta|m,F,D,\delta)=0$ by the definition of $\varepsilon_1$.
\end{proof}

We are now ready to estimate the Gromov-Hausdorff distance of metric balls of
arbitrarily small size $r$:
\begin{thm}[Gromov-Hausdorff distance between nearby metric balls]
\label{thm: Holder_continuity}
Fix a space $(M,p,g,f)$ in the moduli $\mathcal{N}_m(F,K)$, then for any
$\delta\in (0,1\slash 10)$ and $\epsilon \in (0,\delta\slash 10)$ fixed, there are
constants $C_7,C_8$ only depending on $m,F,K,D,\delta$ such that on any minimal
geodesic $\gamma$ contained in $B(p,D)$ with $|\gamma|=l$, and for any $x,y$ on
$\gamma([\delta l,(1-\delta)l])$,
\begin{align*}
\frac{d(x,y)}{l}\ \le\ C_7\quad \Rightarrow\quad
d_{GH}(B(x,r),B(y,r))\ \le\
C_8\left(\frac{d(x,y)}{l}\right)^{\frac{1}{4m+2}}r.
\end{align*}
\end{thm}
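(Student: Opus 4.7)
The plan is to build a Gromov--Hausdorff approximation from $B(x,r)$ to $B(y,r)$ via the gradient flow $\psi_s$ of $-\nabla d_q$, where $q=\gamma(0)$, $x=\gamma(t)$, $y=\gamma(t-s)$, and $s:=d(x,y)$. With $\alpha,\xi$ small and $\eta\in(0,100^{-m})$ to be chosen, Lemma~\ref{lem: big_A} guarantees, provided $s\le \varepsilon_1(\eta)l$, that $T_\eta^r\subset B(x,r)$ and its mirror $(T_\eta^r)'\subset B(y,r)$ lie inside the ``no-spreading'' sets $\mathcal{B}_s^t(\alpha,r)$ and $\mathcal{B}_s^t(\alpha,r)'$. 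Lemma~\ref{lem: distance_estimate} then provides, for $x_1\in T_\eta^r$ and $x_2\in T_\eta^r(x_1)$, the almost-isometry bound
\[
|d(\psi_s x_1,\psi_s x_2)-d(x_1,x_2)|\ \le\ C_6\eta^{-2}r\sqrt{s/l},
\]
together with the symmetric bound for $\psi'_s$ on $(T_\eta^r)'$.

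The discretization step goes as follows. Since $T_\eta^r$ has weighted measure at least $(1-2\eta)\mu_f(B(x,r))$ by \eqref{eqn: Chebyshev}, and $T_\eta^r(x_i)$ likewise occupies a $(1-2\eta)$-fraction of $B(x,r)$ for each $x_i\in T_\eta^r$, the volume doubling from Theorem~\ref{T101} forces every point of $B(x,r)$ to lie within distance $\rho:=c\eta^{1/m}r$ of $T_\eta^r$, with $c=c(m,F,K,D)$. I would pick a maximal $\rho$-separated subset $\{x_i\}\subset T_\eta^r$; the $2\rho$-balls around $\{x_i\}$ cover $B(x,r)$, and by a further Chebyshev selection each $x_i$ can be perturbed within $\rho$ so that $x_j\in T_\eta^r(x_i)$ holds for every pair. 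Setting $y_i:=\psi_s(x_i)$, the distance bound shows that $\{x_i\}\leftrightarrow\{y_i\}$ is a correspondence with distortion at most $C_6\eta^{-2}r\sqrt{s/l}+4\rho$, and by Lemma~\ref{lem: big_A} each $y_i$ lies in $B(y,(1+\alpha)r)$. A parallel construction using $\psi'_s$ on a $\rho$-net of $(T_\eta^r)'$ shows $\{y_i\}$ is $3\rho$-dense in $B(y,r)$, producing a Gromov--Hausdorff $\Psi$-approximation with
\[
\Psi\ \le\ C'\bigl(\eta^{-2}\sqrt{s/l}+\eta^{1/m}+\alpha\bigr)r.
\]

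Finally I would optimize by choosing $\alpha=\eta^{1/m}$ and $\eta=(s/l)^{m/(4m+2)}$, which balances $\eta^{-2}\sqrt{s/l}$ against $\eta^{1/m}$ and collapses all error terms into $C_8(s/l)^{1/(4m+2)}r$. The hypothesis $d(x,y)/l\le C_7$ codifies the combined smallness constraints $\eta\le 100^{-m}$ and $s\le \varepsilon_1(\eta)l$; since $\varepsilon_1$ depends only on $m,F,K,D,\delta$ and $\eta$, this determines $C_7$ purely in those parameters. The main technical obstacle is the self-consistent bootstrap---verifying that the choice $\eta=(s/l)^{m/(4m+2)}$ still satisfies $s\le \varepsilon_1(\eta)l$ for $s/l$ below a definite threshold, and that the Chebyshev selection used to enforce $x_j\in T_\eta^r(x_i)$ simultaneously across all net pairs does not inflate the net spacing beyond the $\eta^{1/m}r$ budget already absorbed into $\Psi$.
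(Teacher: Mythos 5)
Your overall route is the paper's route: flow along $-\nabla d_q$, use the sets $T_{\eta}^r$, $T_{\eta}^r(x)$ and $\mathcal{B}_s^t(\alpha,r)$ together with Lemma~\ref{lem: distance_estimate} and Lemma~\ref{lem: big_A}, get $\eta^{1/m}r$-density of $T_\eta^r$ from volume doubling, and balance $\eta^{-2}\sqrt{s/l}$ against $\eta^{1/m}$ by $\eta\sim(s/l)^{m/(4m+2)}$, with $C_7$ coming from the self-consistency $s\le\varepsilon_1(\eta)l$ and $\eta\le 10^{-2m}$. But the step you yourself flag as the ``main technical obstacle'' is a genuine gap, and it does not close as stated: you want to perturb a maximal $\rho$-separated net $\{x_i\}\subset T_\eta^r$, $\rho=c\eta^{1/m}r$, so that $x_j\in T_\eta^r(x_i)$ for \emph{every} pair. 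The net has $N\gtrsim \eta^{-1}$ points (by doubling), each bad set $B(\gamma(t),r)\setminus T_\eta^r(x_i)$ only has relative measure $\le 2\eta$, so the union of bad sets over $i$ can have relative measure of order $\eta N\sim 1$, while the ball $B(x_j,\rho)$ in which you are allowed to perturb $x_j$ has relative measure only of order $\eta$; a Chebyshev/union-bound selection therefore cannot guarantee a valid perturbation, and the conditions are moreover coupled (moving $x_i$ changes $T_\eta^r(x_i)$). The paper avoids this entirely: for an \emph{arbitrary} pair $x_1,x_2\in T_\eta^r$ it picks a single auxiliary point $y\in T_\eta^r(x_1)\cap T_\eta^r(x_2)\cap\mathcal{B}_s^t(\alpha,r)\cap B(x_1,8C_2^{-1/m}\eta^{1/m}r)$ (nonempty since the intersection has relative measure $\ge 1-8\eta$) and applies the distortion estimate twice plus the triangle inequality, paying only an extra $2d(x_1,y)\lesssim\eta^{1/m}r$, which is absorbed by the same balancing. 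With that substitution your argument becomes the paper's proof; no simultaneous selection over the net is needed, since the distortion bound then holds for all pairs in $T_\eta^r$ and the net is used only for density.

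Two smaller points. First, your choice $\alpha=\eta^{1/m}$ is not how the paper proceeds: there $\alpha$ is a \emph{fixed} constant, chosen in Lemma~\ref{lem: big_A} so that $(1-\eta)^2(1+C\alpha)^{-m}\ge 5/6$, and it never enters the final error; if you insist on shrinking $\alpha$ with $s$ you must re-examine the $\alpha$-dependence of $\varepsilon_1$ (through $\varepsilon_0$ and $\varepsilon_1'\sim(\ln(1+\alpha))^2$), which still closes since $\eta^{2/m}=(s/l)^{2/(4m+2)}\gg s/l$, but it is an unnecessary complication. Second, your claim that the parallel construction with $\psi_s'$ shows the image points $y_i=\psi_s(x_i)$ are $3\rho$-dense in $B(y,r)$ is not immediate (the two flows are not inverse to each other), so the ``onto'' part of the GH approximation needs more care; the paper is admittedly terse here as well, so I only note that this is a point to be argued via a correspondence built from \emph{both} flows rather than by asserting density of the forward image.
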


\begin{proof}
Since the estimate is symmetric in terms of $x$ and $y$, we only argue in one
direction. Now fix any $s\in [0,\varepsilon_1l]$. We immediately have
$\mu_f(\mathcal{B}_s^t(\alpha, r))\ge (1-2\eta)\mu_f(B(\gamma(t),r))$, in view
of Lemma~\ref{lem: big_A} and (\ref{eqn: Chebyshev}). Moreover, we have
$\mu_f(\mathcal{B}_s^t(\alpha, r)\cap T_{\eta}^r(x))\ge
(1-4\eta)\mu_f(B(\gamma(t),r))$ for any $x\in T_{\eta}^r$, still because of
(\ref{eqn: Chebyshev}). Such volume estimates, together with the volume
comparison within $B(p_0,D)$, imply that $T_{\eta}^r$ and $T_{\eta}^r(x)\cap
\mathcal{B}_s^t(r)$ are $4C_2^{-\frac{1}{m}}\eta^{\frac{1}{m}}r$-dense subsets
of $B(\gamma(t),r)$, whenever $x\in T_{\eta}^r$.

Recall that (\ref{eqn: distance_2}) tells that for any
$x_1\in T_{\eta}^r$ and any $x_2\in T_{\eta}^r(x_1)\cap
\mathcal{B}_s^t(\alpha, r)$,
\begin{align*}
|d(\psi_s(x_1),\psi_s(x_2))-d(x_1,x_2)|\ \le\ C_5\eta^{-2}r
\sqrt{s\slash l}.
\end{align*}
Now for any $x_1,x_2\in T_{\eta}^r$, since $\mu_f(T_{\eta}^r(x_1)\cap
T_{\eta}^r(x_2)\cap \mathcal{B}_s^t(\alpha, r))\ge
(1-8\eta)\mu_f(B(\gamma(t),r))$, we could select some 
\begin{align*}
y\in T_{\eta}^r(x_1)\cap T_{\eta}^r(x_2)\cap \mathcal{B}_s^t(\alpha, r)\cap
B(x_1,8C_2^{-\frac{1}{m}}\eta^{\frac{1}{m}}r),
\end{align*}
and estimate
\begin{align*}
|d(\psi_s(x_1),\psi_s(x_2))-d(x_1,x_2)|\ &\le\
|d(\psi_s(x_2),\psi_s(y))-d(x_2,y)|+d(\psi_s(x_1),\psi_s(y))+d(x_1,y) \\
&\le\ 2C_5\eta^{-2}r\sqrt{s\slash l}+2d(x_1,y)\\
&\le\ 2r(C_5\eta^{-2}\sqrt{s\slash l}+8C_2^{-\frac{1}{m}}\eta^{\frac{1}{m}}).
\end{align*}
 In order to make this last
line of the last estimate having only $s$ as the variable, we would like to
choose $\eta$ according to the value of $s$. Notice that in order for this
estimate to hold, we cannot violate $s\le \varepsilon_1 l$; on the other hand,
let us recall that $\varepsilon_1$ depends on $\eta$, which ultimately comes into
play via $\varepsilon_0=\eta^4\slash (16C_6^2)$. Therefore, we will first
choose $\eta(s)$, then check that $s\slash l\le \eta(s)^4\slash (16C_6^2)$ and
$\eta(s)\le 10^{-2m}$ (see Step 1 of Lemma~\ref{lem: distance_estimate}):
let
\begin{align*}
\eta(s)\ :=\ C_6^{\frac{m}{2m+1}}C_2^{\frac{1}{2m+1}}(s\slash
(8l))^{\frac{m}{4m+2}},
\end{align*}
then 
\begin{align}
|d(\psi_s(x_1),\psi_s(x_2))-d(x_1,x_2)|\ \le\ 4(8^mC_6\slash
C_2^2)^{\frac{1}{2m+1}}(s\slash l)^{\frac{1}{4m+2}}r.
\label{eqn: GH_distance_1}
\end{align}
Moreover, the requirements that $s\slash l\le \eta(s)\slash (16C_6^2)$ and
$\eta(s)\le 10^{-2m}$ translate as
\begin{align*}
\frac{s}{l}\ \le\
8C_6^{-2}\min\left\{C_0^42^{-7-14m},C_2^{-\frac{2}{m}}10^{-4-8m}\right\}\ =:\
C_7(m,F,K,D,\delta),
\end{align*}
the right-hand side of which, being a constant only depending on $m, D$,
and $\delta$.
Therefore, once $s\slash l$ is below this $C_7$, the previous requirements of
$\eta(s)$ are met, and all previous estimates go through with no problem. 

Therefore, whenever $d(x,y)\le C_7(m,D,\delta)l$, we have, by the density
estimate and (\ref{eqn: GH_distance_1}), the desired estimate, with the constant 
$C_8(m,F,K,D,\delta)=24(8^mC_6\slash C_2^2)^{\frac{1}{2m+1}}$.
\end{proof}

\subsection{Extension of limit minimal geodesics}
Throughout this subsection, we fix a sequence $\{(M_i,p_i,g_i,f_i,)\}\subset
\mathcal{N}_m(F,K)$ that converges in the pointed-Gromov-Hausdorff topology to a
limit $(X,p_{\infty},d_{\infty},f_{\infty})$. Our focus will be on the
non-compact case, which is more complicated and natural to consider. We will
provide detailed proofs for the necessary adjustments to generalize
Colding-Naber's argument in~\cite[Sections 1.2 and 1.4]{CN12} to
$\mathcal{N}_m(F,K)$ limits.

In view of Theorem~\ref{thm: Holder_continuity}, we could only compare geodesic
balls centered at two points that are away from the endpoints of a minimal
geodesic connecting them. For a fixed complete Riemannian manifold $(M,g)$, and
any pair of points $x,y\in M$, we let $\gamma_{xy}$ denote a minimal geodesic
connecting them. Due to the possible existence of the cut-locus, not every pair
of points $(x,y)\in M\times M$ sees their $\gamma_{xy}$ minimally extensible
to both ends. But the minimal extensibility holds for \emph{almost every} pair
of points, with respect to the natural product measure on $M\times M$.

In order to prove the almost everywhere extensibility of limit minimal geodesics
on a pointed-Gromov-Hausdorff limit, we have to show that the problematic
cut-loci do not accumulate to acquire positive limit measure during the
convergence. The key observation, due to Shouhei Honda~\cite{H11}, is that
the cut-loci could be characterized by an inequality --- the non-vanishing of the
excess function~\cite{AG90} --- whose effective version persists to the
Gromov-Hausdorff limit. The proof of what we need for $\mathcal{N}_m(F,K)$
limits is the same as the original one in~\cite[Appendix A]{CN12}, except at
one point: the crucial estimate in~\cite[Lemma A.2]{CN12} relies on the
Laplacian comparison with a uniform Ricci curvature lower bound, which is not
available for manifolds in $\mathcal{N}_m(F,K)$. The following lemma fills this
only gap in carrying Colding-Naber's original argument to our setting:
\begin{lem}
Suppose $(M,p,g,f)\in \mathcal{N}_m(F,K)$. For each $\delta,r\in (0,1)$, $D>0$
and $k\in \mathbb{N}$, there exists a constant $C_9=C_9(m,F,K,D,\delta)$
such that
\begin{align}
\frac{\mu_{f\cdot f}\left(\mathcal{C}_M(r,k)\cap
A_{\delta,\delta^{-1}}(D)\right)}{\mu_f(B(p,1))^2}\ \le\ C_9\ r.
\label{eqn: effective_cut_loci}
\end{align}
\end{lem}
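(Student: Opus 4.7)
The plan is to adapt Colding-Naber's proof of \cite[Lemma A.2]{CN12} to the weighted setting, with Wei-Wylie's $f$-Laplacian comparison (Theorem~\ref{T102}) substituting for the classical Laplacian comparison. Since we restrict to the annular region $A_{\delta,\delta^{-1}}(D)$, which is contained in the ball $B(p,D_\delta)$ for $D_\delta := D + \delta^{-1}$, the gradient bound $|\nabla f| \le F(2D_\delta)$ is uniform on this region and
\[
\Delta_f d_z(\cdot)\ \le\ \frac{m-1}{d_z(\cdot)}+(m-1)\sqrt{K}+F(2D_\delta)
\]
holds for any reference point $z \in A_{\delta,\delta^{-1}}(D)$. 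This is the only place where a uniform Ricci lower bound enters the original argument, so replacing it makes $C_9$ acquire its $\delta$-dependence through $F(2D_\delta)$.

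First, I would recall the characterization of the effective cut-locus set $\mathcal{C}_M(r,k)$ via the Abresch-Gromoll excess function: a pair $(x,y)$ lies in $\mathcal{C}_M(r,k)$ precisely when no minimizing geodesic from $x$ to $y$ can be extended past $y$ by length of order $r$ within angular scale $1/k$ --- equivalently, for every admissible extension point $\tilde{y}$, the excess
\[
e_{x,\tilde{y}}(y)\ =\ d(x,y)+d(y,\tilde{y})-d(x,\tilde{y})
\]
remains bounded below by a definite amount depending on $k$ and $r$. Honda's observation~\cite{H11} is that this pointwise lower bound on the excess is a closed condition stable under pointed-Gromov-Hausdorff limits, so that (\ref{eqn: effective_cut_loci}) is exactly what is needed for the almost-everywhere extensibility of limit minimal geodesics.

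Second, I would apply the segment inequality (Theorem~\ref{T104}) to $u = \Delta_f d_x + \Delta_f d_{\tilde y}$, pointwise controlled from above by Theorem~\ref{T102}. Cheeger's integration along a minimizing geodesic from $x$ to $\tilde{y}$ identifies the excess $e_{x,\tilde y}(y)$ with the integral of $u$ up to a boundary term of order $r$; combining this with the pointwise lower bound on the excess from Step~1 yields, for each fixed $y$,
\[
\mu_f\bigl(\{x:(x,y)\in\mathcal{C}_M(r,k)\}\cap A_{\delta,\delta^{-1}}(D)\bigr)\ \le\ C\,r\,\mu_f(B(p,D_\delta)).
\]
Integrating this over $y\in A_{\delta,\delta^{-1}}(D)$ against $d\mu_f$, dividing by $\mu_f(B(p,1))^2$, and absorbing the factor $\mu_f(B(p,D_\delta))^2/\mu_f(B(p,1))^2$ via iterated volume doubling (\ref{eqn: volume_doubling}) produces the claimed bound.

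The main obstacle is essentially bookkeeping: the constant $C_9(m,F,K,D,\delta)$ inherits dependencies from $F(2D_\delta)$ through every step of the segment-inequality and volume-doubling chain, and one must also verify that the geodesically convex neighborhoods required by Theorem~\ref{T104} fit inside $B(p,D_\delta)$ for the relevant choices of endpoints. Once all estimates of Section~2.1 are organized around this uniform scale $D_\delta$, no genuinely new analytic input beyond Theorems~\ref{T102} and~\ref{T104} is required, and the remainder of Colding-Naber's argument transfers essentially verbatim.
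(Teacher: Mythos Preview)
Your approach has a genuine gap, and it diverges from the paper's method in a way that does not obviously close.

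The paper does \emph{not} use the segment inequality at all. Instead it works on the product $(M\times M, g\oplus g, f\cdot f)$, parameterizes pairs $(x,y)$ via the normal exponential map from the diagonal $\mathbf{\Delta}$ --- that is, by a midpoint $q=p_{xy}\in B(p,D)$, a unit direction $\mathbf{v}\in S_qM$, and a radial parameter $s=d_{\mathbf{\Delta}}(x,y)$ --- and then applies the $f\cdot f$-Laplacian comparison for $d_{\mathbf{\Delta}}$ to obtain monotonicity of the weighted area element $\mathcal{A}^2_{f\cdot f}(q,\mathbf{v},s)$ divided by a model profile. The factor $r$ in the final estimate has a specific geometric origin: for each fixed $(q,\mathbf{v})$, the set of radii $s\in(\delta,\delta^{-1})$ with $\exp_{(q,q)}s(\mathbf{v},-\mathbf{v})\in\mathcal{C}_M(r)$ has one-dimensional Lebesgue measure at most $r$, because membership in $\mathcal{C}_M(r)$ means one is within $r$ of the cut time along that ray. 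The area monotonicity then converts this radial length bound into the volume bound, and Theorem~\ref{T101} handles the final normalization by $\mu_f(B(p,1))^2$.

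Your Step~2 does not reproduce this mechanism. First, $u=\Delta_f d_x+\Delta_f d_{\tilde y}$ is neither continuous nor nonnegative (it is a signed distribution supported in part on the cut locus), so Theorem~\ref{T104} does not apply to it. Second, the excess $e_{x,\tilde y}(y)$ is not the line integral of $u$ along $\gamma_{x\tilde y}$ up to a boundary term; along that geodesic the excess vanishes identically, and the Abresch--Gromoll estimate controls the excess at a nearby point via a quite different argument. Third, and most importantly, even granting some integral inequality, you have not explained where the linear factor $r$ comes from: the segment inequality produces integral bounds, not measure bounds on level-set-type conditions like $(x,y)\in\mathcal{C}_M(r,k)$, and nothing in your sketch isolates the ``thinness in the radial direction'' of the cut locus that is the entire source of the $r$. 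Without that geometric input the argument cannot conclude.
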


 Here we define, as~\cite[(A.5)]{CN12},
\begin{align*}
\mathcal{C}_M(r,k)\ :=\ \left\{(x,y)\in M\times M:\ \forall z,w\in M,\
d^2(x,z)+d^2(y,w)\ge 2r^2\Rightarrow e_{(z,w)}(x,y)\ge k^{-2}\right\},
\end{align*}
and 
\begin{align*}
e_{(z,w)}(x,y)\ :=\
2^{-\frac{1}{2}}d(x,y)+(d(x,z)^2+d(y,w)^2)^{\frac{1}{2}}-2^{-\frac{1}{2}}d(z,w),
\end{align*}
is the excess function on the isometric product manifold $(M\times M, g\oplus
g)$, see~\cite[(A.2)]{CN12}. Notice that $\mathcal{C}_M(r,k)$ is just a
quantitative version of the $r$-cut-loci $\mathcal{C}_M(r)$ of the manifold
$(M\times M, g\oplus g)$, which is actually
$\cup_{k=1}^{\infty}\mathcal{C}_M(r,k)$. Moreover, $(x,y)\in \mathcal{C}_M(r)$
if and only if the geodesics emanating from the midpoint towards $x$ and $y$
respectively are minimal till they reach $x$ and $y$, but at least one of them
could not be extended beyond $x$ or $y$ as a minimal geodesic for at least a
distance of $r$.

Furthermore, the set $A_{\delta,\delta^{-1}}(D)$ is defined for any $\delta >0$
as
\begin{align*}
A_{\delta,\delta^{-1}}(D)\ :=\ \left\{(x,y)\in M\times M:\
p_{xy}\in B(p,D),\ \delta\le d_{\mathbf{\Delta}}(x,y)\le
\delta^{-1}\right\},
\end{align*}
with $p_{xy}$ denoting the midpoint of a minimal geodesic
connecting $x$ and $y$, and $d_{\mathbf{\Delta}}(x,y)=2^{-\frac{1}{2}}d(x,y)$
denoting the distance between $(x,y)$ and the diagonal $\mathbf{\Delta}$ of
$M\times M$, in the product metric.

We also notice that the product of $(M,p,g,f)$ by itself is
$\left(M\times M,(p,p), g\oplus g, f\cdot f\right)$, where
$\forall x,y\in M, f\cdot f(x,y):=f(x)f(y)$. We have the product an element of
$\mathcal{N}_{2m}(\bar{F},K)$, where
$\bar{F}(x,y):=\sqrt{F(d(p,x))^2+F(d(p,y))^2}$, since clearly we have
\begin{align*}
\forall x,y\in M,\quad |\nabla f\cdot f|^2(x,y)\ &\le\
F(d(p,x))^2+F(d(p,y))^2,\\
\text{and}\quad Rc_{g\oplus g}+\nabla^2(f\cdot f)\ &\ge\ -Kg\oplus g.
\end{align*}
Due to the metric product structure, the distance to the diagonal
$d_{\mathbf{\Delta}}$ enjoys the following Laplace comparison inequality:
$\forall x,y\in B(p,D)$,
\begin{align}
\begin{split}
\Delta_{f\cdot f}d_{\mathbf{\Delta}}(x,y)\ =\
&\frac{1}{\sqrt{2}}(\Delta_f)_xd(x,y)+\frac{1}{\sqrt{2}}(\Delta_f)_yd(x,y)\\
\le\ &\sqrt{2}\left(\frac{m-1}{d(x,y)}+(m-1)\sqrt{K}+F(2R)\right).
\end{split}
\end{align}
 By~\cite[Lemma A.1]{CN12}, the distance of $(x,y)$ to the diagonal $D$ is
 realized by the distance to $(p_{xy},p_{xy})$, the midpoint of the minimal
 geodesic connecting $x$ and $y$. The geodesic ray realizing the distance to the
 diagonal is then given as:
\begin{align*}
\forall (x,y)\in M\times M,\ s\mapsto
\exp_{(p_{xy},p_{xy})}s(\mathbf{v}_{xy},-\mathbf{v}_{xy}),
\end{align*}
where $\mathbf{v}_{xy}:=\dot{\gamma}_{xy}(\frac{1}{2}d(x,y))$, and
$(\mathbf{v}_{xy},-\mathbf{v}_{xy})\in S_{p_{xy}}M\times S_{p_{xy}}M$, the
product of unit tangent vectors.
Associated to this exponential map, we could consider
\begin{align*}
T_r(D)\ :=\ &\left\{(x,y)\in M\times M:\ d_{\mathbf{\Delta}}(x,y)\le r\
\text{and some}\ p_{xy}\in B(p,D)\right\}\\
=\ &\left\{\exp_{(q,q)}s(\mathbf{v},-\mathbf{v}):\ s\in [0,r\slash \sqrt{2}),\
p\in B(p,D),\ \mathbf{v}\in S_qM\right\},
\end{align*}
which is the open $r$-tubular neighborhood of the diagonal $\mathbf{\Delta}
\subset B(p_0,D)\times B(p_0,D)$. Clearly,
$A_{\delta,\delta^{-1}}(D)=T_{\delta^{-1}}(D)\backslash T_{\delta}(D)$, and 
\begin{align*}
 \partial T_s(D)\ =\ &\left\{(x,y)\in M\times M:\ d_{\mathbf{\Delta}}(x,y)=s\
 \text{and some}\ p_{xy}\in B(p,D)\right\}\\
=\ &\left\{\exp_{(q,q)}s(\mathbf{v},-\mathbf{v}):\ (q,\mathbf{v})\in
SB(p,D)\right\},
\end{align*}
where $SB(p,D)$ is the sphere bundle of $B(p,D)$. 

We denote the area form of $\partial T_{s}(D)$ at
$\exp_{(q,q)}s(\mathbf{v},-\mathbf{v})$ by $\mathcal{A}^2(q,\mathbf{v},s)$, and
the $e^{-f\cdot f}$-weighted product measure density on $\partial T_{s}(D)$ by
$\mathcal{A}^2_{f\cdot f}(q,\mathbf{v},s):=e^{-f\cdot
f}\mathcal{A}^2(q,\mathbf{v},s)$.
We notice that
\begin{align*}
\partial_s\ln \left(\mathcal{A}^2_{f\cdot f}(q,\mathbf{v},s)\right)\ =\
&\Delta_{f\cdot f}d_{\mathbf{\Delta}}(\exp_qs\mathbf{v},\exp_q-s\mathbf{v})\\
\le\ &\sqrt{2}\left((m-1)s^{-1}+(m-1)\sqrt{K}+F(D+\sqrt{2}s)\right),
 \end{align*} 
therefore, for any fixed $(q,\mathbf{v})\in SB(p,D)$ and $s\in
[\delta,\delta^{-1}]$, the ratio
\begin{align*}
\frac{\mathcal{A}^2_{f\cdot f}(q,\mathbf{v},s)}{s^{\sqrt{2}(m-1)}
e^{((m-1)\sqrt{2K}+\sqrt{2}F(D+\sqrt{2}\delta^{-1}))s}}
\end{align*}
is monotone non-increasing with respect to $s$.

Recall that $(x,y)\in \mathcal{C}_M(r)$ if and only if the geodesics
$s\mapsto \exp_{p_{xy}}s\mathbf{v}$ and $s\mapsto \exp_{p_{xy}}-s\mathbf{v}$ are
minimal for $s\in (0,\frac{1}{2}d(x,y))$, but cease to be so for some $s\in
(\frac{1}{2}d(x,y),\frac{1}{2}d(x,y)+r)$. Therefore 
\begin{align*}
\forall \delta>0,\ \forall (q,\mathbf{v})\in SB(p,D),\quad \left|\left\{s\in
(\delta,\delta^{-1}):\ \exp_{q}s(\mathbf{v},-\mathbf{v})\in
\mathcal{C}_M(r)\right\}\right|\ \le\ r.
\end{align*}
We now put the estimates together to see:
\begin{align*}
\mu_{f\cdot f}\left(\mathcal{C}_M(r)\cap
A_{\delta,\delta^{-1}}(D)\right)\ =\
&\int_{\delta}^{\delta^{-1}}\left(\int_{\partial
T_s(D)}\chi_{\mathcal{C}_M(r)}e^{-f\cdot f}\right)\
\text{d}s\\
=\ &\int_{SB(p,D)}\int_{\delta}^{\delta^{-1}}
\chi_{\mathcal{C}_M(r)}\mathcal{A}^2_{f\cdot f}(q,\mathbf{v},s)\
\text{d}s\ \text{d}\sigma(q,\mathbf{v})\\
\le\
&\int_{SB(p,D)}
\left(\int_{\delta}^{\delta^{-1}}\chi_{\mathcal{C}_M(r)}\
\text{d}s\right)
\frac{\mathcal{A}^2_{f\cdot f}
(q,\mathbf{v},\delta)}{\delta^{\sqrt{2}(m-1)}e^{((m-1)\sqrt{2K}
+\sqrt{2}F(D+\sqrt{2}\delta^{-1}))\delta}}\ \text{d}\sigma(q,\mathbf{v})\\
\le\ &\int_{SB(p,D)}r\
\frac{\mathcal{A}^2_{f\cdot f}
(q,\mathbf{v},\delta)}{\delta^{\sqrt{2}(m-1)}e^{((m-1)\sqrt{2K}
+\sqrt{2}F(D+\sqrt{2}\delta^{-1}))\delta}}\ \text{d}\sigma(q,\mathbf{v})\\
=\ &\frac{ \mu_{f\cdot f}(\partial
T_{\delta}(D))}{\delta^{\sqrt{2}(m-1)}
e^{((m-1)\sqrt{2K}+\sqrt{2}F(D+\sqrt{2}\delta^{-1}))\delta}}\ r.
\end{align*}
Finally, since $\forall x\in B(p,D+1)$ fixed $\{y\in B(p,D+1): (x,y)\in \partial
T_{\delta}(D)\}=\partial B(x,\sqrt{2}\delta)\cap B(p,D+1)$, we have, by
Theorem~\ref{T101},
\begin{align*}
\mu_{f\cdot f}(\partial T_{\delta}(D))\ \le\
&\int_{B(p,D+\sqrt{2}\delta)}\mu_f(\partial B(x,\sqrt{2}\delta))\
e^{-f(x)}\dvol_g(x)\\
\le\
&\frac{e^{F(D+\sqrt{2}\delta^{-1})(D+\delta^{-1})}
Area_K^{m-1}(\sqrt{2}\delta)}{Vol_K^m(\sqrt{2}\delta)}
\int_{B(p,D+\sqrt{2}\delta)} \mu_f(B(x,\sqrt{2}\delta))\ e^{-f(x)} \dvol_g(x),
\end{align*}
and applying Theorem~\ref{T101} again we have
\begin{align*}
\mu_{f\cdot f}\left(\mathcal{C}_M(r)\cap
A_{\delta,\delta^{-1}}(D)\right)\ \le\ C_9\ \mu_f(B(p,1))^2 r,
\end{align*}
where
\begin{align*}
C_9\ =\ C_9(m,F,K,D,\delta)\ :=\
\frac{e^{F(D+\sqrt{2}\delta^{-1})(D+\delta^{-1})}}{\delta^{\sqrt{2}(m-1)}
e^{((m-1)\sqrt{2K}+\sqrt{2}F(D+\sqrt{2}\delta^{-1})\delta)}}
\frac{Area_K^{m-1}(\sqrt{2}\delta)}{Vol_K^m(\sqrt{2}\delta)}
\left(\frac{Vol_K^m(D+\sqrt{2})}{Vol_K^m(1)}\right)^2.
\end{align*}
Therefore we get the desired estimate (\ref{eqn: effective_cut_loci}), since
$\mathcal{C}_M(r)=\cup_{k=0}^{\infty}\mathcal{C}_M(r,k)$.

The rest of the argument in showing the almost everywhere extensibility follows
verbatim as the rest of~\cite[Appendix A]{CN12}, as well as~\cite{H11}. So we
have shown that with respect to the limit measure, almost every pair of points
lie in a minimal geodesic that minimally extends to both ends:
\begin{lem}[Extension of limit minimal geodesics]
Let a sequence  $\{(M_i,p_i,g_i,f_i)\}\subset \mathcal{N}_m(F,K)$
converge to $(X,p_{\infty},d_{\infty},f_{\infty})$ in the
pointed-Gromov-Hausdorff topology, such that their associated renormalized
measures $\nu_{f_i}$ also converge to a limit measure $\nu_{\infty}$ on $X$,
then $\nu_{\infty}\times \nu_{\infty}$ almost every pair of point $(x,y)\in
X\times X$ lies in the interior of some limit minimal geodesic.
\label{lem: extension}
\end{lem}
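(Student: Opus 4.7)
The plan is to leverage the effective cut-locus estimate~(\ref{eqn: effective_cut_loci}) just established by passing it to the limit space $X$ via the pointed-measured-Gromov-Hausdorff convergence, then letting the cut-locus scale $r$ shrink to zero. Since the quantitative set $\mathcal{C}_M(r,k)$ is defined purely metrically through the excess function $e_{(z,w)}$ on the product $M\times M$, the very same definition makes sense in any metric space; in particular it produces an analogous set $\mathcal{C}_X(r,k)\subset X\times X$ on the limit. Because $e_{(z,w)}(x,y)$ is continuous in all four arguments and the defining inequalities involve closed conditions, $\mathcal{C}_X(r,k)$ is closed in $X\times X$.

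The next step is an upper semi-continuity statement: if $(x_i,y_i)\in M_i\times M_i$ converge (via the product pointed-Gromov-Hausdorff approximations) to $(x,y)\in \mathcal{C}_X(r,k)\cap A_{\delta,\delta^{-1}}(D)$, then for every $r'<r$ and $k'>k$ one has $(x_i,y_i)\in \mathcal{C}_{M_i}(r',k')$ for all $i$ sufficiently large. Indeed, any near-violator $(z_i,w_i)\in M_i\times M_i$ of the defining inequality for $(x_i,y_i)$ would, upon passing to a limit, contradict the fact that $(x,y)$ lies in $\mathcal{C}_X(r,k)$. Combined with the renormalization and the measure convergence $\nu_{f_i}\to \nu_{\infty}$, the weighted volume bound~(\ref{eqn: effective_cut_loci}) then descends to
\begin{align*}
\nu_{\infty}\times\nu_{\infty}\bigl(\mathcal{C}_X(r,k)\cap A_{\delta,\delta^{-1}}(D)\bigr)\ \le\ C_9\,r,
\end{align*}
with $C_9=C_9(m,F,K,D,\delta)$ independent of $k$. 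Taking the union over $k\in \mathbb{N}$ preserves the bound, and then letting $r\downarrow 0$ shows that the set $\bigcap_{r>0}\bigcup_k \mathcal{C}_X(r,k)\cap A_{\delta,\delta^{-1}}(D)$ has zero $\nu_{\infty}\times\nu_{\infty}$-measure. Exhausting the off-diagonal by letting $D\uparrow\infty$ and $\delta\downarrow 0$, and noting that the diagonal is $\nu_{\infty}\times\nu_{\infty}$-negligible since $\nu_{\infty}$ is Radon and atomless on non-collapsed scales, gives a full-measure set of pairs $(x,y)$ each of which admits a minimal geodesic that extends minimally a positive amount beyond both endpoints, i.e.\ $(x,y)$ lies in the interior of a limit minimal geodesic.

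The main obstacle, as emphasized in the excerpt, is the upper semi-continuity of $\mathcal{C}_M(r,k)$ under pointed-Gromov-Hausdorff convergence in a setting where only a Bakry-\'Emery lower bound with varying potential is available; one must check that the infimum-type condition defining $\mathcal{C}_M(r,k)$ behaves well under GH-approximations and that the product renormalized measures $\nu_{f_i\cdot f_i}$ converge weakly to $\nu_{\infty}\times \nu_{\infty}$ on bounded annuli around the diagonal. Both facts go through essentially as in Honda~\cite{H11} and~\cite[Appendix A]{CN12}, since their arguments are metric-measure theoretic and the only quantitative ingredient specific to the Ricci-lower-bound setting is the Laplacian comparison used in~\cite[Lemma A.2]{CN12}, whose replacement in $\mathcal{N}_m(F,K)$ is precisely the estimate~(\ref{eqn: effective_cut_loci}) just proved.
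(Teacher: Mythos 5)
Your proposal takes essentially the same route as the paper: the paper's proof of this lemma consists precisely of the effective cut-locus estimate (\ref{eqn: effective_cut_loci}) established immediately beforehand, together with the limit-passing argument (semicontinuity of the quantitative cut-loci $\mathcal{C}_M(r,k)$ under the measured convergence, then sending $k\to\infty$, $r\to 0$, $\delta\to 0$, $D\to\infty$), which the paper, exactly like you, defers verbatim to~\cite[Appendix A]{CN12} and~\cite{H11}. There is no substantive difference between your argument and the paper's.
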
 

Now we are in a position to state and prove the (weak) convexity of the regular
part in the Gromov-Hausdorff limit of a sequence of Ricci shrinkers. To start
the discussion, let us state an immediate consequence of the H\"older
continuity of the geodesic balls along a geodesic segment:
\begin{prop}[H\"older continuity of tangent cones]
\label{prop: tangent_cone_continuity} 
Let $(X,p_{\infty},d_{\infty},f_{\infty})$ be a pointed-Gromov-Hausdorff limit
of a sequence in $\mathcal{N}_m(F,K)$, then the tangent cones resulted from the same
scaling sequence varies H\"older-continuously in the Gromov-Hausdorff topology,
as the point varies in the interior of limit minimal geodesics.
\end{prop}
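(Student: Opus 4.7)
The plan is to use the scale invariance of the H\"older estimate in Theorem~\ref{thm: Holder_continuity} to transfer it first to the limit space $X$ and then to the tangent cones.

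First, I would fix a limit minimal geodesic $\gamma_\infty : [0, l] \to X$---realized as a pointed-GH limit of minimal geodesics $\gamma_i: [0, l_i] \to M_i$ with $l_i \to l$---and consider two interior points $x = \gamma_\infty(t)$, $y = \gamma_\infty(s)$ with $t, s \in [\delta l, (1-\delta) l]$ for some $\delta \in (0, 1/10)$. Setting $x_i := \gamma_i(t l_i /l)$ and $y_i := \gamma_i(s l_i /l)$, one has $x_i \to x$ and $y_i \to y$ in pointed-GH. Restricting to pairs with $d_\infty(x,y)/l \le C_7/2$ (the general case reduces to this by subdividing $\gamma_\infty$ and chaining via the triangle inequality), Theorem~\ref{thm: Holder_continuity} provides, for all $i$ large and every $r \in (0, \delta l_i/10)$,
\ba
d_{GH} \lc B_{g_i}(x_i, r), B_{g_i}(y_i, r) \rc \ \le\ C_8 \lc \frac{d_i(x_i, y_i)}{l_i} \rc^{\frac{1}{4m+2}} r,
\ea
where $C_7, C_8$ are uniform in $i$ since the $M_i$ share the class $\mathcal{N}_m(F,K)$ and a common $D$ containing all the $\gamma_i$.

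Second, for each fixed $r$ I would let $i \to \infty$: the pointed-GH convergence at $x$ and $y$ forces the balls to converge in GH, so the same H\"older bound holds for $d_{GH}(B_{d_\infty}(x, r), B_{d_\infty}(y, r))$. Since GH distance scales linearly with the metric, rewriting the inequality for the rescaled distance $r^{-1} d_\infty$ gives a bound on the GH distance between the unit balls around $x$ and $y$ that is \emph{independent of} $r$:
\ba
d_{GH} \lc B_{r^{-1}d_\infty}(x,1),\ B_{r^{-1}d_\infty}(y,1) \rc \ \le\ C_8 \lc \frac{d_\infty(x,y)}{l} \rc^{\frac{1}{4m+2}}.
\ea
An analogous estimate holds for balls of any fixed radius $R$, by replacing $r$ with $Rr$ before dividing, which is what is needed to compare pointed spaces in pointed-GH.

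Third, along the chosen scaling sequence $r_j \to 0$ I would extract by a diagonal argument a common subsequence for which both $(X, x, r_j^{-1} d_\infty)$ and $(X, y, r_j^{-1} d_\infty)$ converge in pointed-GH to tangent cones $T_x$ and $T_y$; the joint pre-compactness required is furnished by the uniform volume doubling of the rescaled limit measures discussed in Section~2.3. Passing the scale-invariant estimate to this limit yields $d_{GH}(T_x, T_y) \le C_8 (d_\infty(x,y)/l)^{1/(4m+2)}$, giving H\"older continuity with exponent $1/(4m+2)$. The principal technical point is precisely this joint extraction of tangent cones at $x$ and $y$ along a \emph{common} subsequence of the given scaling sequence; once that is settled, the scale invariance of the H\"older bound does the rest.
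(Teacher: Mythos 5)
Your proposal is correct and follows essentially the same route as the paper: apply the scale-invariant H\"older estimate of Theorem~\ref{thm: Holder_continuity} along the geodesic, pass it to the limit space, and then to the tangent cones determined by a common scaling sequence. The only difference is that you spell out the limiting steps (transferring the estimate from the $M_i$ to $X$ and the diagonal extraction of a common subsequence for the two tangent cones) that the paper leaves implicit.
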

\begin{proof}
Let $\gamma_{\infty}:[0,l]\to X$ be a limit minimal geodesic of unit speed,
and set 
\begin{align*}
D\ =\ 2\max_{s\in [0,l]}d(p,\gamma_{\infty}(s)).
\end{align*}
 Now for any $s,t\in
(\delta l,(1-\delta)l)$ ($\delta\in (0,1\slash 10)$ arbitrary) such that
$|s-t|\le C_7(m,D,\delta)l$, and for any $r\in (0,\delta l\slash
10)$, we have
\begin{align*}
d_{GH}(B_X(\gamma_{\infty}(s),r),B_X(\gamma_{\infty}(t),r))\ \le\ 
C_8(n,D,\delta)\left(\frac{|s-t|}{l}\right)^{\frac{1}{4m+2}}r.
\end{align*}
Notice that this estimate is scaling invariant, therefore we could push it to
the tangent cone: let $r_i\to 0$ be a sequence of positive numbers that
determines tangent cones $X_{\gamma_{\infty}(s)}$ and $X_{\gamma_{\infty}(t)}$,
then the above inequalities give
\begin{align*}
d_{GH}\left(B_{X_{\gamma_{\infty}(s)}}(o_s,1),
B_{X_{\gamma_{\infty}(t)}}(o_t,1)\right)\ \le\
C_8(m,D,\delta)\left(\frac{|s-t|}{l}\right)^{\frac{1}{4m+2}},
\end{align*}
whence the desired H\"older continuity of tangent cones.
\end{proof}

\begin{remark}\label{rmk: closedness}
Especially, we see that for $t_i\to t\in (0,l)$, taking
\begin{align*}
\delta=\frac{1}{2l}\min\{d_{X}(\gamma_{\infty}(t),\gamma_{\infty}(0)),
d_{X}(\gamma_{\infty}(t),\gamma_{\infty}(l)),1\slash 10\},
\end{align*}
the above estimate gives:
\begin{align*}
X_{\gamma_{\infty}(t_i)}\ \longright{pointed-Gromov-Hausdorff}\
X_{\gamma_{\infty}(t)},
\end{align*} 
and if $X_{\gamma_{\infty}(t_i)}=\mathbb{R}^k$, then so is $X_{\gamma}(t)$:
$\gamma_{\infty}((0,l))\cap \mathcal{R}_{k}$ is a closed subset of
$\gamma_{\infty}((0,l))$.
\end{remark}

Given the extension lemma (Lemma~\ref{lem: extension}), the H\"older continuity
of tangent cones (Proposition~\ref{prop: tangent_cone_continuity}), and the
$\nu_{\infty}$-negligibility of the singular set (Proposition~\ref{prop:
zero_singular}), we could now prove Theorem~\ref{thm: main3} in a way identical to the original one
in~\cite[Sections 1.2 and 1.4]{CN12}. For the sake of simplicity, we will not
repeat the argument here, but refer the readers to the proofs of Theorems 1.7,
1.18 and 1.20 in~\cite{CN12}.

If we consider the sub-collection $\mathcal{M}_m(F,K;V_0)$, then together with
Theorem~\ref{thmin:a}, we have
\begin{thm}\label{thm: main_N}
Let a sequence $\{(M_i,p_i,g_i,f_i)\}\subset \mathcal N_m(F,K;V_0)$ 
converge to a limit metric space $(X,p_{\infty},d_{\infty},f_{\infty})$ in the
pointed-Gromov-Hausdorff topology, then the regular part $\mathcal{R}\subset X$
is a strongly convex open set, equipped with a limit $C^{1,\alpha}$ metric
$g_{\infty}$ such that $(\mathcal{R},g_{\infty})$ becomes a metric subspace of
$(X,d_{\infty})$.
\end{thm}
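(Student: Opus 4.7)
The plan is to reduce Theorem~\ref{thm: main_N} to a clean open-closed-connected argument on geodesic segments. Openness of $\mathcal{R}$, existence of the $C^{1,\alpha}$ limit metric $g_{\infty}$, and the identification of the length metric of $(\mathcal{R},g_{\infty})$ with $d_{\infty}$ are already packaged into Theorem~\ref{thmin:a}. The only genuinely new assertion to establish is the strong convexity of $\mathcal{R}$; the metric subspace claim will drop out as a corollary.

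I would fix a limit minimal geodesic $\gamma:[0,l]\to X$ meeting $\mathcal{R}$ at some $t_0\in[0,l]$ and study $S:=\gamma^{-1}(\mathcal{R})\cap(0,l)$. Openness of $S$ in $(0,l)$ is immediate from the openness of $\mathcal{R}\subset X$ (Theorem~\ref{thmin:a}) together with the continuity of $\gamma$. Non-emptiness of $S$ is almost as easy: if $t_0\in(0,l)$ then $t_0\in S$, while if $t_0\in\{0,l\}$ the openness of $\mathcal{R}$ at the endpoint produces a one-sided relatively open neighborhood in $\gamma^{-1}(\mathcal{R})$ that necessarily meets $(0,l)$. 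Since $(0,l)$ is connected, the strong convexity of $\mathcal{R}$ reduces to showing that $S$ is closed in $(0,l)$.

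The key step, then, is closedness of $S$, and this is where the analytic machinery of the previous sections pays off. The non-collapsing hypothesis combined with Theorem~\ref{T:noncollapsing} gives $\mathcal{R}=\mathcal{R}_m$, so for a sequence $t_i\to t\in(0,l)$ with $\gamma(t_i)\in\mathcal{R}$, it suffices to show that every metric tangent cone at $\gamma(t)$ is Euclidean $m$-space. This is precisely the content of Remark~\ref{rmk: closedness}: applying Proposition~\ref{prop: tangent_cone_continuity} (itself an application of the H\"older continuity Theorem~\ref{thm: Holder_continuity}) with the scale parameter $\delta$ chosen comparable to the distance from $t$ to the endpoints, one obtains pointed Gromov-Hausdorff convergence $X_{\gamma(t_i)}\to X_{\gamma(t)}$, and the Euclidean property passes to the limit. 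Thus $t\in S$, $S=(0,l)$, and $\gamma((0,l))\subset\mathcal{R}$.

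For the metric subspace claim, take $x,y\in\mathcal{R}$. The pointed Gromov-Hausdorff limit $X$ is a complete length space (Theorem~\ref{T:weak}), so there is a minimal geodesic from $x$ to $y$ in $X$, and strong convexity forces its entire interior --- and hence the whole curve, since the endpoints already lie in $\mathcal{R}$ --- to stay in $\mathcal{R}$. Thus $d_{\infty}(x,y)$ is realized by a curve in $\mathcal{R}$, and combined with the coincidence of length structures supplied by Theorem~\ref{thmin:a}, we obtain that $(\mathcal{R},g_{\infty})$ sits isometrically inside $(X,d_{\infty})$. The main obstacle in the proof is really located in the earlier sections --- the $f$-heat kernel estimates with changing gradient control, the parabolic smoothing of distance functions, and the resulting H\"older continuity of nearby metric balls. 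Once these are in hand, Theorem~\ref{thm: main_N} follows by essentially topological bookkeeping.
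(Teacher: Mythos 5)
Your proposal is correct and follows essentially the same route as the paper: openness, the $C^{1,\alpha}$ metric and the coincidence of metric structures are quoted from Theorem~\ref{thmin:a}, and strong convexity is obtained by the same open--closed--connected argument on $\gamma^{-1}(\mathcal{R})\cap(0,l)$, with closedness supplied by Remark~\ref{rmk: closedness} (via Proposition~\ref{prop: tangent_cone_continuity}). Your extra care with the endpoint case, with $\mathcal{R}=\mathcal{R}_m$ from Theorem~\ref{T:noncollapsing}, and with the metric subspace statement only fills in details the paper leaves implicit.
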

\begin{proof}
It has already proven in Theorem \ref{thmin:a} that the regular
part $\mathcal{R}$ is open in $X$, that the convergence is $C^{1,\alpha}$ on
$\mathcal{R}$, and that the limit metrics (in the metric sense and in the
tensor sense) coincide. We only need to prove the strong convexity. Now if a
minimal geodesic $\gamma:[0,1]\to X$ intersects $\mathcal{R}$ non-trivially,
then set
\begin{align*}
I_{\gamma,\mathcal{R}}\ :=\ \{t\in [0,1]:\ \gamma(t)\in \mathcal{R}\}
\end{align*}
is non-empty and is open relative to $[0,1]$. But by Remark~\ref{rmk:
closedness}, we know that $I_{\gamma,\mathcal{R}}$ is also closed relative to
$(0,1)$, therefore $I_{\gamma,\mathcal{R}}=(0,1)$ and therefore the entire
interior of $\gamma$ is contained $\mathcal{R}$. The strong convexity is thus
proven.
\end{proof}

If we further restrict our attention to the collection of all $m$-dimensional
complete Ricci shrinkers, then we have the following
\begin{thm}[Regular-convexity of Gromov-Hausdorff limits]\label{thm: RS}
Let $\{(M_i,p_i,g_i,f_i)\}$ be a sequence of pointed $m$-dimensional Ricci
shrinkers which converges to a limit metric space
$(X,p_{\infty},d_{\infty},f_{\infty})$ equipped with a limit potential function
$f_{\infty}$, in the pointed-Gromov-Hausdorff topology, such that the
associated probability measures $\rho_i$ converges to $\rho_{\infty}$ on $X$,
then the following holds:
\begin{enumerate}
  \item Assuming the sequence is contained in $\mathcal{M}_m(A)$ for some fixed
  positive constant $A$, then the Hausdorff dimension of $X$ is $m$, and
  $\mathcal{R}\subset X$ is a strongly convex open set, which, when equipped
  with the limit metric $d_{\infty}$, becomes an $m$-dimensional Riemannian
  manifold with a $C^{\infty}$ metric tensor that satisfies the Ricci shrinker equation;
  \item Without assuming a uniform positive lower bound of the
  set $\{\mu_{f_i}(M_i)\}$, then there is a unique natural number $k\le
  m$, such that $\rho_{\infty}(X\backslash \mathcal{R}_k)=0$; moreover,
  $\mathcal{R}_k$ is both $\rho_{\infty}$-a.e. convex and weakly convex.
\end{enumerate}
\end{thm}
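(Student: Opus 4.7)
The plan is to treat the two claims separately: claim (1) reduces to the non-collapsing theorem already proven in this section plus a standard Ricci shrinker regularity upgrade, while claim (2) reduces to Theorem~\ref{thm: main3} once I show $\rho_{\infty}$ and $\nu_{\infty}$ are mutually proportional. For (1), the inclusion $\mathcal{M}_m(A)\subset \mathcal{M}_m(F_{RS},\tfrac{1}{2};c_me^{-A})$ recalled after Definition~\ref{defn:moduli} places the sequence in the non-collapsing subclass, so Theorem~\ref{thm: main_N} immediately yields that $\mathcal{R}$ is a strongly convex open subset of $X$ carrying a $C^{1,\alpha}$ limit metric $g_{\infty}$ that induces $d_{\infty}|_{\mathcal{R}}$. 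To upgrade regularity I would run the standard elliptic bootstrapping for the shrinker system
\[
Rc_{g_i}+\mathrm{Hess}_{f_i}=\tfrac{1}{2}g_i,\qquad R_{g_i}+|\nabla f_i|^2=f_i,
\]
in harmonic coordinates on $\mathcal{R}$, exactly as in~\cite{HM11} and~\cite{LLW17}; this simultaneously promotes the convergence to pointed $\hat{C}^{\infty}$-Cheeger-Gromov and shows $g_{\infty}$ solves the shrinker equation on $\mathcal{R}$. The Hausdorff dimension equals $m$ since each regular point has a Euclidean tangent cone of dimension $m$ by Theorem~\ref{T:noncollapsing}, while $\mathcal{S}$ has Minkowski codimension at least $4$ by Theorem~\ref{thmin:a}.

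For (2), I drop the non-collapsing assumption and replace Theorem~\ref{thm: main_N} by Theorem~\ref{thm: main3}; the new ingredient is to relate $\rho_{\infty}$ and $\nu_{\infty}$. By construction $\rho_{f_i}=c_i\nu_{f_i}$ with $c_i:=\mu_{f_i}(B(p_i,1))/\mu_{f_i}(M_i)\in (0,1]$, so everything reduces to a uniform positive lower bound on $c_i$. Here I would combine the quadratic growth of $f$ from Lemma~\ref{L100}, the volume bound of Lemma~\ref{L100a}, and the tail estimate~(\ref{eqn: tail}): the tail estimate shows that a definite fraction of $\mu_{f_i}(M_i)$ is already captured by the fixed ball $B(p_i,D_{K_m})$, and the volume comparison Theorem~\ref{T101} bounds the mass of that ball against $\mu_{f_i}(B(p_i,1))$ with constants depending only on $m$. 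The composition gives $c_i\ge c_0(m)>0$, and after passing to a subsequence $c_i\to c_{\infty}\in (0,1]$ and $\rho_{\infty}=c_{\infty}\nu_{\infty}$ as Radon measures on $X$.

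With the proportionality in hand, Theorem~\ref{thm: main3} produces a unique $k\le m$ with $\nu_{\infty}(\mathcal{R}_k)>0$ together with the $\nu_{\infty}$-a.e.\ convexity and weak convexity of $\mathcal{R}_k$. Proposition~\ref{prop: zero_singular} gives $\nu_{\infty}(\mathcal{S})=0$, and the uniqueness of $k$ forces $\nu_{\infty}(X\setminus\mathcal{R}_k)=0$; scaling by $c_{\infty}$ transfers this to $\rho_{\infty}(X\setminus\mathcal{R}_k)=0$. The $\rho_{\infty}$-a.e.\ convexity follows from the $\nu_{\infty}$-a.e.\ version via $\rho_{\infty}\times\rho_{\infty}=c_{\infty}^{2}\,\nu_{\infty}\times\nu_{\infty}$, and weak convexity is purely metric, hence measure-independent. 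The only genuinely new step I anticipate is the uniform lower bound on $c_i$; the regularity upgrade in (1) is standard for Ricci shrinkers, and everything else is a bookkeeping combination of results already established in this paper.
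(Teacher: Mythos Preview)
Your proposal is correct and follows essentially the same route as the paper: for (1) you invoke the inclusion $\mathcal{M}_m(A)\subset \mathcal{M}_m(F_{RS},\tfrac{1}{2};c_me^{-A})$, apply Theorem~\ref{thm: main_N}, and upgrade regularity via elliptic bootstrapping on the shrinker system, exactly as the paper indicates; for (2) you reduce to Theorem~\ref{thm: main3}. The one place you go beyond the paper is in explicitly justifying the passage from $\nu_{\infty}$ to $\rho_{\infty}$ by proving the constants $c_i=\mu_{f_i}(B(p_i,1))/\mu_{f_i}(M_i)$ are uniformly bounded below --- the paper simply calls part (2) ``a restatement of Theorem~\ref{thm: main3} for the special case of Ricci shrinkers'' and leaves this proportionality implicit, whereas your argument via the tail estimate~(\ref{eqn: tail}) and Theorem~\ref{T101} makes it precise.
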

Clearly, after applying a usual elliptic regularity argument, the first
alternative in this theorem is a special case of Theorem~\ref{thm: main_N}. Moreover, by the
equivalence of the uniform $\boldsymbol{\mu}$-entropy lower bound and the uniform volume
non-collapsing property (see~\cite{Pe1}, ~\cite{HM11} and \cite[Lemma
2.5]{LLW17}), the first alternative of this theorem states the same as
Theorem~\ref{thm: main2}. The second alternative is a restatement of
Theorem~\ref{thm: main3} for the special case of Ricci shrinkers.

\section{Discussion}

In geometric analysis, the compactness of the moduli of certain collection of
spaces, in an appropriate topology, is a fundamental problem. In the setting
of Ricci shrinkers, we would like to ask whether the collection of all conifold
Ricci shrinkers with a given dimension and a uniform lower bound of the
$\boldsymbol{\mu}$-entropy is compact in the
pointed-$\hat{C}^{\infty}$-Cheeger-Gromov topology.

This question is not a direct consequence of Theorem~\ref{thm: main2}, since it
is not true that all conifold Ricci shrinkers arise as the
pointed-$\hat{C}^{\infty}$-Cheeger-Gromov limits of elements in
$\mathcal{M}_m(A)$. In the K\"ahler setting, orbifold K\"ahler-Ricci solitons
(of complex dimension at least $2$) whose quotient singularities are of real codimension at least $4$ and non-smoothable provide
examples of conifold Ricci shrinkers not in the closure of $\mathcal{KM}_{n}(A)$, the moduli space of (complex) $n$-dimensional K\"ahler-Ricci shrinkers with $\boldsymbol{\mu}$-entropy bounded below by $-A$. 
The Riemannian setting is even more complicated. It is by itself an interesting
problem to understand those conifold Ricci shrinkers that are not on the
boundary of $\mathcal{M}_m(A)$, and partial progress towards this direction has
already been made in~\cite{LWs1} and~\cite{LLW17}.

We believe that the compactness question could be answered affirmatively,
in view of the previous work done in the K\"ahler-Ricci flat setting \cite[Theorem
1.3]{CW17A}, especially considering that many of the analytical tools developed
in \cite{CW17A} only assume the Riemannian setting.

\textbf{Acknowledgement.} We would like to thank the anonymous referees for
several valuable comments that help improve the exposition of the paper. The
third-named author was partially supported by NSF grant DMS-1510401, as well as
the General Program of the National Natural Science Foundation of China (Grant
No. 11971452).

\vskip20pt
Shaosai Huang, Department of Mathematics, University of
Wisconsin - Madison; Address: 480 Lincoln Drive, Madison, WI 53706, U.S.A.;
E-mail: sshuang@math.wisc.edu.\\

Yu Li, Department of Mathematics, Stony Brook University; Address: 100 Nicolls
Road, Stony Brook, NY 11794, U.S.A.; E-mail: yu.li.4@stonybrook.edu.\\

Bing  Wang, Institute of Geometry and Physics, 
and Wu Wen-Tsun Key Laboratory of Mathematics,  School of
Mathematical Sciences, University of Science and Technology of China; Address: No. 96 Jinzhai Road,
Hefei, Anhui Province, 230026, China; E-mail: topspin@ustc.edu.cn.

\end{document}